\numberwithin{equation}{section}
\newtheorem{theorem}[equation]{Theorem}
\newtheorem{proposition}[equation]{Proposition}
\newtheorem{lemma}[equation]{Lemma}
\newtheorem{conjecture}{Conjecture}
\newtheorem{definition}[equation]{Definition}
\theoremstyle{definition}
\newtheorem{terminology}{Terminology}
\theoremstyle{remark}
\newtheorem{question}{Question}
\DeclareFontShape{OMX}{cmex}{m}{b}{<-> cmexb10}{}
\DeclareMathOperator {\area} {area}
\DeclareMathOperator{\lie}{\mathrm{lie}}
\DeclareMathOperator{\carea}{carea}
\begin{document}
\title{Quantum Maslov classes}
\author{Yasha Savelyev}
\thanks {}
\email{yasha.savelyev@gmail.com}
\address{University of Colima, CUICBAS}
\keywords{}
\begin{abstract} We give a construction of ``quantum Maslov
characteristic classes'', generalizing to higher dimensional
cycles the Hu-Lalonde-Seidel morphism.
We also state a conjecture
extending this to an $A _{\infty}$ functor from the exact
path category of the space of monotone Lagrangian branes to the Fukaya category. Quantum Maslov classes
are used here for
the study of Hofer geometry of Lagrangian equators in $S
^{2}$, giving a rigidity phenomenon for the Hofer metric
2-systole, which
stands in contrast to the flexibility phenomenon of the
closely related Hofer metric girth studied by Rauch ~\cite{cite_Itamar}, in
the same context of Lagrangian equators of $S ^{2}$.  
More applications appear in
~\cite{cite_SavelyevGlobalFukayacategoryII}.  
\end{abstract}
\maketitle
\section{Introduction}
There are numerous results in symplectic geometry which
express a kind of symplectic flexibility vs rigidity
phenomena. But one novelty here is that this
rigidity/flexibility will involve Hofer geometry of Lagrangian
submanifolds. In fact, we will see this just for the simplest
example of Lagrangian equators of $S ^{2}$. 

Our main method is a higher dimensional generalization of
the Hu-Lalonde-Seidel morphism
~\cite{cite_HuLalondeArelativeSeidelmorphismandtheAlbersmap},
that we call Quantum Maslov classes. 

Let $Lag (M)$ denote the space of 
monotone Lagrangian branes: in particular closed, oriented, spin, monotone
Lagrangian submanifolds of a symplectic manifold $(M, \omega
)$.  
Denote by $\mathcal{P} (L _{0}, L _{1})$ the space of
exact paths in $Lag (M)$ from $L _{0}$ to $L _{1}$, with its natural $C
^{\infty }$ topology, see Section \ref{sec_ExactPaths}.
We set $\Omega  _{L _{0}} Lag (M) := \mathcal{P} (L _{0},
L _{0})  $.  The 
Hu-Lalonde-Seidel morphism in one basic form is a 
homomorphism:
\begin{equation} \label{eq_HuLalondeIntro}
 \pi _{0}  (\Omega  _{L _{0}} Lag (M)) \to FH (L
 _{0}, L _{0}),
\end{equation}
where $FH (L _{0}, L _{0})$ denotes the $\mathbb{Z}  _{2}$ graded
Floer homology group.

\begin{theorem} \label{thm_}
For all $d$, there is a natural and non-trivial additive group homomorphism:
\begin{equation*}
\Psi _{d}: \pi _{d}   (\mathcal{P} (L _{0}, L _{1})) \to FH
(L _{0}, L _{1}).  
\end{equation*}
\end{theorem}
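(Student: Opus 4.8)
The plan is to define $\Psi_d$ by a parametrized count of pseudoholomorphic half-disks with a moving Lagrangian boundary condition, generalizing the domain used by Hu--Lalonde--Seidel. I would fix the domain $\Sigma$ to be the closed disk with a single boundary puncture $p$; near $p$ choose strip-like coordinates $(s,t)\in(-\infty,0]\times[0,1]$, and let the two boundary arcs emanating from $p$ lie on $L_0$ (the arc $t=0$) and $L_1$ (the arc $t=1$). The remaining boundary arc of $\Sigma$ is a single interval, along which I impose a \emph{moving} boundary condition. Given a class in $\pi_d(\mathcal{P}(L_0,L_1))$, represent it by a smooth map $\sigma\mapsto\gamma^\sigma=\{L^\sigma_\tau\}_{\tau\in[0,1]}$, $\sigma\in S^d$; for each $\sigma$ the condition on the moving arc sweeps the path $\gamma^\sigma$ from $L_0$ to $L_1$, while at $p$ the map converges to an intersection point $x\in L_0\cap L_1$. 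With $J$ a generic $S^d$- and domain-dependent family of compatible almost complex structures, let $\mathcal{M}(x)$ denote the resulting parametrized moduli space $\{(\sigma,u)\}$, of expected dimension $d+\ind$ where $\ind$ is the relevant Maslov index. I would then set
\[
\Psi_d(f) \;:=\; \sum_{x}\bigl(\#\mathcal{M}(x)\bigr)\,x \;\in\; CF(L_0,L_1),
\]
the sum over those generators $x$ with $\dim\mathcal{M}(x)=0$, i.e. $\ind=-d$, counted with signs determined by the chosen spin structures.

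The analytic heart is to establish, for generic data, that these parametrized moduli spaces are transversally cut out manifolds of dimension $d+\ind$, are compact in the $0$- and $1$-dimensional cases, and carry coherent orientations. Transversality I would achieve by perturbing the $S^d$-family of almost complex structures, which is possible precisely because $S^d$ supplies the extra parameters needed to realize negative unparametrized index. Compactness rests on a uniform energy bound: here the \emph{exactness} of the paths $\gamma^\sigma$ is essential, since it pins the symplectic area of any $u\in\mathcal{M}(x)$ to the topological and action data of $x$, so no energy escapes and Gromov compactness applies. Coherent orientations come from the spin hypothesis on the branes, and the monotonicity hypothesis, together with the standing assumptions that make $FH(L_0,L_1)$ well defined, controls disk and sphere bubbling: such bubbling is either of codimension $\ge 2$ or is absorbed by the curvature terms already present in the Floer differential, so it does not obstruct the arguments below.

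Next I would establish the three formal properties. For the \emph{cycle property}, I inspect the ends of the $1$-dimensional spaces $\mathcal{M}(x)$ (those with $\ind=-d+1$). Because $S^d$ is closed there are no ends coming from the boundary of the parameter space; the only ends are Floer strip breakings at $p$, and the usual gluing and cobordism argument identifies their signed count with $\partial\,\Psi_d(f)$, whence $\partial\Psi_d(f)=0$ and $\Psi_d(f)$ descends to a class in $FH(L_0,L_1)$. For \emph{invariance}, a homotopy of representatives, or a change of auxiliary data, is a family over $S^d\times[0,1]$, and the associated parametrized cobordism yields a primitive for the difference of the two cycles, so $[\Psi_d(f)]$ depends only on the class $f\in\pi_d(\mathcal{P}(L_0,L_1))$. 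For \emph{additivity}, I would represent $f+g$ by a map agreeing with the constant path outside two disjoint balls $B_1,B_2\subset S^d$ and equal to representatives of $f$ and $g$ on $B_1$ and $B_2$; for $d\ge 1$ the unparametrized index-$(-d)$ moduli spaces over the constant region are generically empty, so the count localizes to $B_1\sqcup B_2$ and $\Psi_d(f+g)=\Psi_d(f)+\Psi_d(g)$. Together with the evident naturality under the action of symplectomorphisms, which intertwine the moduli problems on source and target, this gives the asserted natural additive homomorphism.

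Finally, there is non-triviality, which I expect to be the main obstacle rather than the formal construction: one must exhibit a class on which $\Psi_d$ is nonzero. The natural route is the computation underlying the paper's application, taking $M=S^2$, $L_0=L_1$ a Lagrangian equator, and a distinguished class in $\pi_d$ of the relevant loop space of branes; there the half-disk moduli spaces are governed by the explicitly understood holomorphic disks in $\mathbb{CP}^1$, and one evaluates $\Psi_d$ directly, checking that the resulting element of $FH(L_0,L_0)$ is nonzero, for instance by comparison with the known nonvanishing of the $d=0$ Hu--Lalonde--Seidel element or via the module structure over $QH(M)$. Thus the decisive difficulties are, first, assembling the parametrized Fredholm package---transversality, compactness via exactness, orientations, and bubbling control in the monotone setting---so that $\Psi_d$ is a well-defined homomorphism on $\pi_d$, and second, producing the explicit nonzero computation that certifies non-triviality.
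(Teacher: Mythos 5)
Your construction of $\Psi_d$ is essentially the paper's: the paper works with $J(\mathcal{A})$-holomorphic sections of the trivial fibration $\mathcal{D}\times M$ over the once-punctured disk, with the Lagrangian subbundle $\mathcal{L}_{f(s)}$ over $\partial\mathcal{D}$ encoding your moving boundary condition and an $\mathcal{L}_{f(s)}$-exact Hamiltonian connection $\mathcal{A}_s$ playing the role of your perturbation data; the cycle and invariance arguments (Lemma \ref{lem_evaluation}) and the additivity argument (Lemma \ref{lem_PsiPigroup}, a connect sum performed at points of $S^d$ carrying no solutions, which is your localization to $B_1\sqcup B_2$) are the same. So the formal package is fine, modulo the usual transversality caveats.

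The genuine gap is non-triviality, which the theorem asserts and which you correctly identify as the main obstacle but then only gesture at. Your two suggested shortcuts would not succeed: there is no comparison map reducing $\Psi_d$ for $d>0$ to the $d=0$ Hu--Lalonde--Seidel element, and the $QH(M)$-module structure does not detect the higher homotopy of $\mathcal{P}(L_0,L_1)$. The paper's actual mechanism (Section \ref{section:computeSeidel}) is a mini-max argument. One first compresses the generator of $\pi_2(\mathcal{P}^{eq}(L_0,L_1))$ into the $2$-skeleton of the Morse decomposition of the path space (Whitehead's compression lemma), producing a representative $f_0$ for which $b\mapsto L^+(f_0(b))$ has a unique index-$2$ maximizer $\max$. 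One then builds a \emph{taut} Hamiltonian structure with $\area(\mathcal{A}_b)=L^+(f_0(b))$, so that the identity $\carea(\Theta_b,A_0)=L^+(f_0(\max))$ together with the energy positivity bound $\carea\le\area$ forces every class-$A_0$ solution to sit over $b=\max$, where the only solution is the constant horizontal section $\sigma_{\max}$. Finally, an automatic-regularity argument (vertical Maslov number $-2$ killing the kernel of the linearized operator, with Fredholm index $-2$ matched against the Morse index of $\area$ at $\max$) shows this single solution survives perturbation and contributes $\pm[\gamma_0]\neq 0$. Without the tautness and area apparatus there is no way to confine the parametrized moduli space to a single fiber, and without the compression to a minimizing representative the mini-max has no critical point at which to localize; these are the ideas your proposal is missing, and they are where the actual work of the theorem lies.
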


The extension furthermore,  determines a certain functor to the
Donaldson-Fukaya category. But we will not give details
of this here, as it is not needed for the main geometric
application. See
Conjecture \ref{conj_functor} for a formal
statement, on the level of an $A _{\infty}$ functor to the
Fukaya category.

We will use non-triviality of $\Psi$ for our Hofer geometric application.
This story is formally similar to author's use of quantum characteristic
classes in Hofer geometry, 
~\cite{cite_SavelyevQuantumcharacteristicclassesandtheHofermetric}.
However, unlike the case of
~\cite{cite_SavelyevQuantumcharacteristicclassesandtheHofermetric},
when we try to apply the calculation of $\Psi$ to the study of Hofer geometry
of Lagrangians, an unexpected new complexity
arises. To get anything interesting (for $d>0$) we need certain tautness conditions on families of
Lagrangians. This gives rise to new geometric structures
called taut Hamiltonian structures, whose theory we partly
develop here.

\subsection{Taut conditions and Hofer geometry} \label{sec_Taut conditions and Hofer geometry}
The basic example of a taut condition is the following.
\begin{definition}\label{def_concordantIntro}
Two smooth loops $$p _{0}, p _{1}: S ^{1} \to  Lag (M)  $$
are said to be \textbf{\emph{taut concordant}} if the following holds: 
\begin{itemize}
	\item There is a smooth fiber-wise Lagrangian sub-fibration $$\mathcal{L}
	\subset Cyl \times M, \quad Cyl=S ^{1} \times [0,1], $$
	satisfying $\mathcal{L} _{(\theta, 0)} = p _{0} (\theta)
	$ and $\mathcal{L} _{(\theta, 1)} = p _{1} (\theta)
	$ for all $\theta \in S ^{1}$.
	\item There is a Hamiltonian connection $\mathcal{A}$ on
	the trivial $M$ bundle $Cyl \times M \to Cyl$, preserving $\mathcal{L}$, such that
	the coupling form $\Omega _{\mathcal{A}} $ of $\mathcal{A}$ vanishes on $\mathcal{L}$. See Section \ref{sec:couplingform} for the definition of coupling forms.
\end{itemize}
\end{definition}

We let $\Omega   ^{taut} _{L _{0}} Lag (M) \subset
\Omega  _{L _{0}}  Lag (M) $ denote the subspace of  loops taut concordant to $p _{L _{0}}$, the constant loop at $L _{0}$.

\subsubsection{Lagrangian
equators in $S ^{2}$} \label{sec_Systolic inequality} Let $L
_{0} \subset S ^{2}$ now denote the standard equator. And
let $Lag ^{eq} (S ^{2}) \subset Lag (S ^{2}, L _{0})$ be the
subspace of great circles, where $Lag (M, L _{0}) \subset
Lag (M)$ denotes the subspace of elements Hamiltonian
isotopic to $L _{0}$. Note that $Lag ^{eq} (S ^{2})$ is
naturally diffeomorphic to $S ^{2}$. 

The
natural embedding $$i: \Omega  _{L _{0}}  Lag ^{eq} (S ^{2})
\hookrightarrow  \Omega   _{L _{0}} Lag (S ^{2}),$$ has
image in $\Omega   ^{taut} _{L _{0}} Lag ( S ^{2})$ as two loops $p
_{0}, p _{1}: S ^{1} \to Lag ^{eq} (S ^{2} ) $ are taut concordant iff they are homotopic, see Lemma
\ref{lemma:tautLagS2}. 

Let $$f: S ^{2} \to \Omega _{L _{0}}
^{taut} Lag (S ^{2}),$$ represent the image by $i _{*}$ of
the generator of $$\pi _{2} (\Omega _{L _{0}} ^{taut} Lag (S
^{2}), p _{L _{0}}) \simeq \pi _{3}  (S ^{2}) \simeq \mathbb{Z}.
$$
Let $b = [f]$ denote its homotopy class (based at $p _{L
_{0}}$).
\begin{theorem} \label{thm:Hofer}
    We have the following identity for the 2-systole with respect to $L ^{+} $:
   \begin{equation*}
  \min_{f' \in b} \max _{s \in S ^{2} } L ^{+} (f' (s)) = 1/2 \cdot \area (S ^{2},
  \omega),
  \end{equation*} where $L ^{+} $ denotes the positive Hofer length functional, as defined in Section \ref{section:hoferlength}. 
Furthermore, $$\pi _{3}  (Lag
^{eq} (S ^{2}))  \to \pi _{3}
(Lag (S ^{2})), 
\text{ is an injection}. $$
\end{theorem}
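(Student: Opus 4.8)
The plan is to establish the systolic identity by a matching pair of inequalities, and then deduce the injection from the lower bound. For the inequality $\le$, I would represent $b$ by the rotation family. Identifying $Lag^{eq}(S^2)\cong S^2$ with oriented great circles via their unit normals, let $N$ be the normal of $L_0$, and for each $u\in S^2$ let $\ell_u\in\Omega_{L_0}Lag^{eq}(S^2)$ be the based loop $\theta\mapsto R_{u,\theta}\cdot L_0$, $\theta\in[0,2\pi]$, where $R_{u,\theta}$ is rotation about the axis $u$ by angle $\theta$; a full rotation is the identity, so each $\ell_u$ is a loop at $L_0$, and $\ell_{\pm N}$ is constant. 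The assignment $u\mapsto\ell_u$ is the standard $SO(3)$-realization of the Hopf generator under the adjunction $\pi_2(\Omega S^2)\cong\pi_3(S^2)$; by Lemma \ref{lemma:tautLagS2} it takes values in $\Omega^{taut}_{L_0}Lag(S^2)$ and represents $b$. Each $\ell_u$ is generated by the mean-zero height Hamiltonian $H_u=\langle u,\cdot\rangle$ with $\max_{S^2}H_u^+=\max_{S^2}H_u$, and a direct computation gives $\int_0^{2\pi}\max_{S^2}H_u^+\,d\theta=\tfrac12\area(S^2,\omega)$; hence $L^+(\ell_u)\le\tfrac12\area(S^2,\omega)$ for every $u$, and so $\max_{s}L^+(f'(s))\le\tfrac12\area(S^2,\omega)$ for this representative.

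For $\ge$, I would argue by contradiction using non-triviality of $\Psi_2$ (Theorem \ref{thm_}), namely $\Psi_2(b)\ne0$ in $FH(L_0,L_0)$. Given any $f'\in b$, the class $\Psi_2(f')$ is a signed count of rigid solutions of the family Floer equation over the $S^2$-family cut out by $f'$; the taut condition of Definition \ref{def_concordantIntro} is precisely what forces the coupling form to vanish along the Lagrangians, so that the geometric energy is controlled and no boundary bubbling occurs. By the standard Seidel-type curvature estimate the energy of any such solution is at most the positive part of the fibration area, hence at most $\max_s L^+(f'(s))$, while monotonicity of $L_0$ makes every non-constant contribution cost at least the minimal Maslov-$2$ disk area $\tfrac12\area(S^2,\omega)$. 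Thus if $\max_s L^+(f'(s))<\tfrac12\area(S^2,\omega)$ only constant solutions survive and $\Psi_2(f')$ vanishes in its degree, contradicting $\Psi_2(b)\ne0$. Minimizing over $f'\in b$ yields $\ge$, and combined with the previous step this proves the identity.

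Finally, $\pi_3(Lag^{eq}(S^2))\cong\pi_3(S^2)\cong\mathbb Z$, so the injection is equivalent to the Hopf generator having infinite order in $\pi_3(Lag(S^2))$; under $\pi_2(\Omega_{L_0}Lag(S^2))\cong\pi_3(Lag(S^2))$ that image is $b$. The lower-bound mechanism delivers more than positivity: the leading Novikov valuation recorded by the detected Floer solution is additive under concatenation of families, giving a homomorphism $\pi_3(Lag(S^2))\to\mathbb R$ with value $\tfrac12\area(S^2,\omega)\ne0$ on $b$, so $b$ has infinite order and the map is injective. I expect the lower bound to be the crux: the real work is to set up the parametrized Floer theory defining $\Psi_2$ so that the taut hypothesis genuinely yields both compactness and the energy--Hofer inequality, to make the abstract non-triviality of Theorem \ref{thm_} effective for this specific $b$, and to upgrade the detection to an honest additive $\mathbb R$-valued invariant for the infinite-order claim; the upper bound and the homotopy-theoretic identifications should be comparatively routine.
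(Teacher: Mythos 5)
Your architecture matches the paper's: upper bound from the explicit rotation family, lower bound from non-vanishing of the quantum Maslov class combined with an area estimate that tautness makes available, and the injection from $\Psi$ being a homomorphism. The upper-bound paragraph is fine (and supplies a step the paper leaves implicit). The genuine gap is in the lower bound. You invoke ``$\Psi_2(b)\neq 0$ in $FH(L_0,L_0)$'' as if it were handed to you by Theorem \ref{thm_}, but the computation that actually backs up non-triviality (Theorem \ref{thmNonZeroPsi} and Section \ref{section:computeSeidel}) is carried out for a \emph{transverse} pair $L_0,L_1$ of equators and the path space $\mathcal{P}^{eq}(L_0,L_1)$: it identifies the unique contributing relative class $A_0$ as the one carried by the horizontal section $\sigma_{\max}$ over the index-$2$ geodesic, with $Maslov^{vert}(\sigma_{\max}^{/})=-2$, hence $\carea = 1/2\cdot\area(S^2,\omega)$ after capping. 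The paper therefore does not argue with based loops at $L_0$ directly: given $f'\in b$ with $\max_s L^+(f'(s))=U<1/2\cdot\area(S^2,\omega)$, it concatenates each $f'(s)$ with a short geodesic $p_0$ from $L_0$ to a nearby transverse $L_1$ with $L^{\pm}(\widetilde{p}_0)=\kappa<(1/2\cdot\area(S^2,\omega)-U)/2$, obtaining a representative of the computed class $a\in\pi_2(\mathcal{P}(L_0,L_1))$ whose connection areas are still $<1/2\cdot\area(S^2,\omega)-\kappa$, and then caps off the end so that Lemma \ref{lemma:gluinglowerbound} converts the coupling area into the Maslov number. Without this transfer your appeal to non-triviality is not backed by anything proved, and the $\kappa$ bookkeeping is exactly what lets the strict inequality survive the passage from loops to paths.

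A second, smaller issue: your assertion that ``every non-constant contribution costs at least the minimal Maslov-$2$ disk area'' only makes sense once the cost of a section class is known to be independent of the representative $f'\in b$; that is Lemma \ref{lemmaInvariantPairing}, and it is precisely where the taut-concordance hypothesis (each $f'(s)$ taut concordant to the constant loop, via Lemma \ref{lemma:tautLagS2}) enters --- tautness is not merely a compactness device, it is what pins $\carea(\Theta_s^{/0},A_0)$ at $1/2\cdot\area(S^2,\omega)$ for \emph{every} representative. One also needs that the class actually detected by $\Psi$ has $\carea\neq 0$, which again is the explicit computation rather than abstract non-vanishing; you flag this as ``the real work'' but it cannot be deferred, since the whole inequality rests on it. Finally, for the injection your ``additive leading Novikov valuation'' is an unproven detour: since $FH(L_0,L_1)$ is a $\mathbb{Q}$-vector space and $\Psi$ is additive with $\Psi(a)=\pm[\gamma_0]\neq 0$ on the generator, $\Psi\circ i_*$ is already injective on $\pi_2(\mathcal{P}^{eq}(L_0,L_1))\cong\mathbb{Z}$, which is all the paper uses and all that is needed.
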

For contrast, suppose we measure a related quantity of the
``girth'' (infimum of the diameter of a representative) of
the generator $[g]$ of $\pi _{2} (Lag (S ^{2}), L _{0}),$ 
as in Rauch~\cite{cite_Itamar}. Then there is an upper bound for
this girth, which is smaller than the lower bound for girth
of $[g]$, considered as an element of $\pi _{2} (Lag ^{eq}
(S ^{2} ), L _{0})$.  
Indeed, it may be that girth of the generator $[g] \in \pi
_{2} (Lag (S ^{2}), L _{0})$ is actually 0. I think the
latter is unlikely, it would in particular contradict the
Lagrangian version of the injectivity radius conjecture of ~\cite{cite_LalondeSavelyevOntheinjectivityradiusinHofergeometry}. 

To summarize, passing
from classical equators to Lagrangian equators, we see
a squeezing phenomenon for girth.
On the other hand, our theorem says that this kind of
squeezing cannot happen at all for the 2-systole, provided
we work with taut families. In other words, whereas the
2-systole exhibits
Hamiltonian rigidity, the girth in \cite{cite_Itamar} while
geometrically, closely related, exhibits flexibility.  

In fact the construction of
Rauch should also show the flexibility of the 2-systole as soon as
we remove the tautness assumption.
That is:
\begin{equation*}
  \min_{f' \in  [f], [f] \in \pi _{2} (\Omega _{L _{0}} Lag (S
	^{2}))} \max _{s \in S ^{2} } L ^{+} (f' (s)) < 1/2 \cdot
	\area (S ^{2}, \omega).
\end{equation*}

\begin{question} \label{que_} Is there a non-zero lower
bound?
\end{question}
Another question:
\begin{question} \label{que_semiclassical} It is shown in
~\cite{cite_SavelyevBottperiodicityandstablequantumclasses}
that a certain semi-classical limit of quantum
characteristic classes are the Chern classes. Is there some
semi-classical limit for the quantum Maslov classes?
\end{question}

\section{Hamiltonian fibrations and taut structures}
We collect here some preliminaries on moduli spaces of
holomorphic sections of fibrations with Lagrangian boundary
constraints, and the closely related curvature bounds.
There is an apparently new theory here of taut Hamiltonian
structures,  but aside from that much of this material has
previously appeared elsewhere, perhaps in less generality.
Some of the generality here is a slightly excessive for the
purpose of this paper, but it is used in
~\cite{cite_SavelyevGlobalFukayacategoryII}, and it does not
substantially increase volume. 
\subsection {Coupling forms} \label{sec:couplingform}
We refer the reader to  \cite[Chapter
6]{cite_McDuffSalamonIntroductiontosymplectictopology} for more details on what
follows. We will suppose throughout that $(M, \omega )$ is a closed
symplectic manifold, and later on monotone. A Hamiltonian fibration is a smooth fiber bundle
$$M \hookrightarrow P \to X,$$ 
with structure group $\mathcal{H} = \operatorname  {Ham}(M, \omega)
$ with its $C ^{\infty} $ Frechet topology.
A \textbf{\emph{Hamiltonian connection}} is just an
Ehresmann $\mathcal{H} $ connection on this fiber bundle.
$\mathcal{H} $ trivializations will be called Hamiltonian
bundle trivializations. $\mathcal{H} $ bundle maps will be
called Hamiltonian fibration maps or Hamiltonian bundle
maps.

A \emph{coupling form}, as defined in
\cite{cite_GuilleminLermanEtAlSymplecticfibrationsandmultiplicitydiagrams},
for a Hamiltonian
fibration $M \hookrightarrow P \xrightarrow{p} X$, is a closed 2-form $
{\Omega} $ on $P$ whose restriction to fibers coincides with $\omega $  and
that has the property: 
\begin{equation*}  \int _{M}  {\Omega} ^{n+1} =0,
\end{equation*}
with integration being integration over the fiber operation.

Such a 2-form determines a Hamiltonian
connection, by declaring horizontal spaces to be $ {\Omega}
$-orthogonal spaces to the vertical tangent spaces. A coupling form generating a
given connection $\mathcal{A}$ is unique. A Hamiltonian connection $
\mathcal {A} $ in turn determines a coupling form $ {\Omega} _{
\mathcal {A}} $ as follows. First we ask that $ {\Omega} _{ \mathcal
{A}} $ induces the connection $ \mathcal {A} $ as above. This determines $
{\Omega} _{ \mathcal {A}} $ up to values on $ \mathcal {A}
$-horizontal lifts $ \widetilde{v},  \widetilde{w} \in T _{p} P $ of $v,w \in T
_{x} X $. We specify these values by the formula
\begin{equation} \label{eq:couplingvalue}
{\Omega} _{
\mathcal {A}} ( \widetilde{v}, \widetilde{w}) = R _{ \mathcal {A}} (v, w) (p),
\end{equation}
where $R _{\mathcal {A}}$ is the lie algebra valued curvature 2-form of $\mathcal{A}$. Specifically, for each $x$, $R _{\mathcal {A}}| _{x} $ is 
a 2-form valued in $C
^{\infty} _{norm} (p ^{-1} (x))$ - the space of 0-mean normalized smooth
functions on $p ^{-1} (x) $.

\subsection {Hamiltonian structures on fibrations} \label{section:exact}
Let $S$ be a Riemann surface with boundary, with punctures
in the boundary, and a fixed structure of strip end charts
at ends, (positive or negative), i.e. a strip end structure
as in ~\cite[Section 3.2]{cite_SavelyevGlobalFukayacategoryI}. 

Let $M
\hookrightarrow \widetilde{{S}} \xrightarrow{pr} {S} $ be
a Hamiltonian fibration, with model fiber a monotone
symplectic manifold $(M,\omega)$, with distinguished
Hamiltonian bundle trivializations $$[0,1] \times (0, \infty) \times M \to  \widetilde{{S}} $$ at the positive ends,
and with distinguished Hamiltonian bundle trivializations $$[0,1] \times (-\infty, 0) \times M \to \widetilde{{S}}, $$ at the negative ends.
These are collectively called called \textbf{\emph{strip end
charts}}, (slightly abusing terminology).  Given the
structure of such Hamiltonian bundle trivializations we say that $\widetilde{{S}} $ has
   \textbf{\emph{end structure}}.
\begin{definition} \label{def:respectsendstructure}
    Let  $$\mathcal {L} \subset (\widetilde{S}| _{\partial
		S}= pr ^{-1} (\partial S))   \to \partial S$$ be
		a Lagrangian sub-bundle, with model fiber a Lagrangian
		brain, or more specifically an object as in ~\cite[Section 5]{cite_SavelyevGlobalFukayacategoryI}, 
		(in particular a closed, spin, oriented, monotone
		Lagrangian submanifold). 
		We say that $\mathcal{L}$ \textbf{\emph{respects the end
   structure}} if $\mathcal{L}$ is a constant sub-bundle in
	 the strip end chart trivializations above.
\end{definition}
For $\mathcal{L} $ as above, in the strip end chart coordinates at the end $e
_{i} $, let $L ^{j} _{i}  $ denote the fibers (which are by assumption $t$ independent) of
$\mathcal{L}$ over 
$$\{j\} \times \{t\}, \,j=0,1.  $$

We say that a Hamiltonian connection $\mathcal {A}$ on $\widetilde{{S}} $ is \textbf{\emph{compatible}}
with the connections $\{\mathcal{A} _{i} \}$ on $[0,1]
\times M$ at each end $e _{i} $, if the following holds. 
In the strip coordinate chart at the $e _{i} $ end,
$\mathcal{A}$ is flat and $\mathbb{R}$-translation invariant
and has the form $\overline{\mathcal{A}} _{i}$,
where $\overline {\mathcal{A}} _{i}   $ denotes the $\mathbb{R}$-translation invariant extension of $\mathcal{A} _{i} $ to $(0, \pm \infty) \times \mathbb{R}
\times M$, depending  on whether the end is positive or
negative. We say that a Hamiltonian connection, $\mathcal
{A}$ on $\widetilde{{S}} $  is $\mathcal{L}$-\textbf{\emph{exact}} if $\mathcal{A}$ preserves
$\mathcal{L}$ (this means that the horizontal spaces of $\mathcal{A}$ are tangent to $\mathcal{L}$). 

For $\mathcal{A}$ compatible with $\{\mathcal{A} _{i} \}$ as above, a family $\{j _{z} \}$ of fiber wise $\omega$-compatible almost complex structures on $\widetilde{S} $ will be said to \textbf{\emph{respect the end structure}} if the following holds. 
At each end $e _{i} $, in the strip end chart above, the family $\{j _{z}\} $ is $\mathbb{R}$-translation invariant
and is admissible with respect to $\mathcal{A} _{i}$, in the
sense of ~[Definition 5.3]\cite{cite_SavelyevGlobalFukayacategoryI}.
The data $\Theta =(\widetilde{S}, S, \mathcal{L}, \mathcal{A}, \{j _{z} \} )$, with $\mathcal{A}$
compatible with $\{\mathcal{A} _{i} \}$, $\{j _{z} \}$,
and respecting the end structure,  will be called a \textbf{\emph{Hamiltonian structure}}.

We will normally suppress $\{j _{z} \}$ in the notation and elsewhere for simplicity, as it will be purely in
the background in what follows, (we do not need to
manipulate this family explicitly).

\subsection{Relative section classes of Hamiltonian structures} \label{sec_Relative section classes of Hamiltonian structures}
\begin{definition} Let $(\widetilde{S}, S, \mathcal{L},
\mathcal{A})$ be a Hamiltonian structure, we say that
a smooth section $\sigma$ of $\widetilde{S} \to S $ is
\textbf{\emph{asymptotically flat}} if the following holds. 
At each end $e _{i} $ of $S$, $\sigma$ $C ^{1} $-converges to an $\mathcal{A}$-flat section. Specifically, in the strip end chart at a positive end, this means that there is a $\mathcal{A}$-flat section $$\widetilde{\sigma}: [0,1] \times (0,\infty) \to [0,1] \times (0,\infty) \times M,$$ so that for every $\epsilon>0$ there is a $t>0$ so that $$d _{C ^{1} } (\widetilde{\sigma}, {\sigma}| _{[0,1] \times [t,\infty)} ) < \epsilon. $$ (Similarly for a negative end.)
\end{definition} 
Note that the above definition implies that $$\lim _{s
\mapsto \infty}  {\sigma| _{[0,1] \times
\{s\} }} = \gamma ^{i}, $$ for some $\mathcal{A}
_{i}$-flat sections $\gamma _{i}$ of $[0,1] \times M $,  where the limit
is the $C ^{1}$ limit. (Similarly for negative ends.) 
So we can say that $\sigma$ 
is \textbf{\emph{asymptotic}} to $\gamma ^{i}$ at the $e _{i}$
end, and that $\gamma ^{i}$ is the
\textbf{\emph{asymptotic constraint}}  of $\sigma$ at the $e
_{i}$ end.
\begin{definition}\label{def:relativeclass}
	Given a pair of asymptotically
flat sections $\sigma _{1}, \sigma _{2}  $, with boundary in
$\mathcal{L}$, we say that they have the same
\textbf{\emph{relative class}} if:
\begin{itemize}
	\item They are asymptotic to the same flat sections at
	each end. (In the sense above.) 
	\item They are homologous relative to the boundary
	conditions and relative to the asymptotic constraints at the ends.
\end{itemize}
  The set of relative classes will be denoted by $H _{2}
	^{sec} (\widetilde{S}, \mathcal{L}) $. 
\end{definition}
\subsection{Families of Hamiltonian structures.} \label{section:Family}
\begin{definition} \label{def:HamiltonianStructure}
 A \textbf{\emph{family Hamiltonian structure}}, consists of the following:
\begin{enumerate}
      \item A  smooth, connected, simply connected, compact,
			oriented manifold $\mathcal{K}$, possibly with
			boundary or corners.    
      \item For
each $r \in \mathcal{K}$ a Hamiltonian structure
$(\widetilde{S} _{r}, S _{r}, \mathcal{L} _{r}, \mathcal{A}_{r}  )$, 
 such that this family fits into smooth fibrations $$ \widetilde{S}
 \hookrightarrow \bm{\widetilde{S}} \xrightarrow{p _{1}
 } \mathcal{K},  \quad S \hookrightarrow \textbf{S} \xrightarrow{p} \mathcal{K},  $$
and $\{\widetilde{S} _{r} \}$, respectively $\{S _{r} \}$
correspond to the fibers of the first respectively second
fibration. In addition the following:
\begin{itemize}
	\item The second fibration has fiber a Riemann surface, so that $\{S _{r} \}= \{p  ^{-1} r  \}$.
	\item The first fibration has fibers $p
	_{1} ^{-1} (r)$  that are themselves the total spaces of smooth Hamiltonian fibrations $M \hookrightarrow \widetilde{S}
	_{r} \to S _{r} $, ($\widetilde{S} _{r} \simeq
	\widetilde{S} $),
	such that the structure group of $\widetilde{S}
	\hookrightarrow  \bm{\widetilde{S}}
	\xrightarrow{p _{1} } \mathcal{K}$ can be reduced to
	smooth Hamiltonian fibration maps (of $\widetilde{S}$).
\end{itemize}
     To elaborate further, let $$M \hookrightarrow
		 \widetilde{S} \to S$$ be a Hamiltonian fibration
		 over a Riemann surface $S$. Let $Aut$ denote the group
		 of Hamiltonian fibration automorphisms of
		 $\widetilde{S} $. Then $\bm{\widetilde{S}}
		 \xrightarrow{p _{1} } \mathcal{K}$ is the associated
		 bundle $P \times _{Aut} \widetilde{S}$ for some principal $Aut$ bundle $P$ over $\mathcal{K}$.
      \item The strip end charts $$e _{i,r}:  [0,1] \times (0, \infty) \times M \to 
\widetilde{{{S}  }} _{r},  $$ for the positive ends, fit
into a Hamiltonian fibration map onto the image:
\begin{equation} \label{eqE}
\widetilde{e}  _{i}:  [0,1]
   \times (0, \infty) \times \mathcal{K} \times M \to
	 \bm{\widetilde{S}},
\end{equation}
similarly for the negative ends.
\item \label{item:rinvariant} In case of positive ends, we then have an induced smooth $r$-family of connections $\{e _{i,r} ^{*}  \mathcal{A} _{r} \}$
on $[0,1] \times (0, \infty) \times M$, and an induced smooth $r$-family of Lagrangian subfibrations $\{e _{i,r} ^{-1} \mathcal{L}_{r} \}   $ over $\partial [0,1]
   \times (0,  \infty)$.
We ask that $$\forall r: \{e _{i,r} ^{-1}
   \mathcal{L}_{r} \} = \{0\} \times (0,  \infty) \times L ^{0} _{i} \cup \{1\} \times (0,  \infty) \times L ^{1} _{i},        $$
where $L ^{j} _{i}  $ are as following the Definition
\ref{def:respectsendstructure}.
Furthermore, we ask that $$\forall r: \{e _{i,r} ^{*}
\mathcal{A} _{r} \} = \overline {\mathcal{A}}  _{i} $$ for
$\mathcal{A} _{i}, \overline{\mathcal{A}}_{i}    $ as
previously.  (Similarly for negative ends.) 
   \item \label{property:A} There is a Hamiltonian
	 connection $\mathcal{A}$ on $\bm{\widetilde{S}} \to
	 \textbf{S} $ that extends all the connections
	 $\mathcal{A} _{r} $ (in the natural sense), and preserves $\textbf{L} := \cup _{r} \mathcal{L} _{r}  $. 
\end{enumerate}  
\end{definition}
We will write  
$\{\widetilde{{S}}_{r}, {S}_{r}, \mathcal{L}_{r},
\mathcal{A} _{r} \} _{\mathcal{K}} $  for this data, $\mathcal{K}$ may be omitted from notation when it is implicit. 
\begin{terminology} 
We will usually just Hamiltonian structure instead of
family Hamiltonian structure. The distinction between the
two is clear from context and notation.
\end{terminology}
   
Let $\{\widetilde{{S}}_{r}, {S}_{r}, \mathcal{L}_{r},
\mathcal{A} _{r} \} $ be a Hamiltonian
structure. In the notation above, if in addition there exists
a Hamiltonian connection $\mathcal{A}$ on
$\bm{\widetilde{S}} \to \textbf{S} $ as in Property \ref{property:A}, so that $\Omega _{\mathcal{A}}$ vanishes on $\textbf{L}$, we will say that $\{\widetilde{{S}}_{r}, {S}_{r}, \mathcal{L}_{r}, \mathcal{A} _{r} \}  $ is a \textbf{\emph{hyper taut Hamiltonian structure}}.

%
\subsection {Moduli spaces of sections of Hamiltonian
structures} \label{sec:ModuliSpacesHamStructures}
Let $\Theta = (\widetilde{{S}}, S,  \mathcal{L},
\mathcal{A} )$ be a Hamiltonian structure.
For a section $\sigma$ of $\widetilde{S} $ define its
vertical $L ^{2} $ energy or \textbf{\emph{Floer energy}}  by $$e (\sigma):= \int _{S} |\pi _{vert}  \circ d\sigma| ^{2}, $$ $$\pi _{vert}: T \widetilde{S} \to T ^{vert} \widetilde{S}    $$ is the $\mathcal{A}$-projection, for $T ^{vert} \widetilde{S}$ the vertical tangent bundle of $\widetilde{S} $, that is the kernel of the projection $T \widetilde{S} \to TS$. 

As in ~\cite{cite_SavelyevGlobalFukayacategoryI}, let $J (\mathcal{A} ) $ denote the almost
complex structure on $\widetilde{S} $ determined by
$\mathcal{A} $ and $\{j _{z}\}$ as follows.
\begin{itemize}
	\item $J (\mathcal{A} ) $ preserves the $\mathcal{A}
	$-horizontal distribution of $\widetilde{S} $.
	\item The projection map $\widetilde{S} \to S$ is $J (A)
	$-holomorphic.
	\item  The restriction of $J (\mathcal{A} ) $ to each
	fiber $M _{z}$ of $\widetilde{S} $ over $z \in S$ is $j
	_{z}$.
\end{itemize}
We say that $J (\mathcal{A} ) $ is \textbf{\emph{induced}}
by $\mathcal{A} $, $\{j _{z}\}$.

Define  $\overline{\mathcal{M}} (\Theta)$ to be the
Gromov-Floer compactification of the space of  $J
(\mathcal{A})$-holomorphic sections $\sigma$ of
$\widetilde{\mathcal{S}} $, with finite Floer energy, and with
boundary on $\mathcal{L}$. 

Note that for any
$J({\mathcal{A}})$-holomorphic $\sigma$ we have an identity:
\begin{equation*}
e (\sigma) = \int _{S} \sigma ^{*} \Omega _{\mathcal{A}}.
\end{equation*}
We leave to the reader to verify that $\Omega
_{\mathcal{A}}$ vanishes on $\mathcal{L}$, using 
the conditions that $\mathcal{A}$ preserves
$\mathcal{L}$, and that $\mathcal{L} $ is
a fiber-wise Lagrangian distribution. So the standard energy
controls apply, to deduce the standard Gromov-Floer
compactification structure on spaces of $J (\mathcal{A}
)$-holomorphic sections, at least once we establish Lemma
\ref{lemmaInvariantPairing} further ahead.

\subsubsection{Family version} \label{sec_Family version}
More generally, if $\{\Theta _{r} \}=\{\widetilde{{S}}_{r}, S _{r},  \mathcal{L}_{r}, \mathcal{A}_{r}\} _{\mathcal{K}}$ is a Hamiltonian structure, let $$\overline{\mathcal{M}} (\{\Theta _{r}\})$$ be the Gromov-Floer compactification of the space of pairs $(\sigma, r)$, $r \in \mathcal{K} $ with $\sigma$  a $J (\mathcal{A} _{r})$-holomorphic, finite Floer energy section of $\widetilde{\mathcal{S}}_{r} $, with
boundary on $\mathcal{L}
_{r}$.

Pick an Ehresmann fiber bundle connection $\mathcal{B} $ on $\widetilde{S}
 \hookrightarrow \bm{\widetilde{S}} \xrightarrow{p _{1}
 } \mathcal{K}$, so that the $\mathcal{B} $-holonomy maps at $r _{0} \in
 \mathcal{K} $ are morphisms of the Hamiltonian structure
 $\widetilde{S}  _{r _{0}}$ in the natural sense
 (Hamiltonian fibration maps of $\widetilde{S} $ preserving the
 Lagrangian distribution $\mathcal{L} $, and such that the
 bundle map is trivial over the ends, with respect to the
 strip end structure).
Since
$\mathcal{K} $ is simply connected the action of the holonomy
group of $\mathcal{H} $ on $H _{2} ^{sec} (\widetilde{S} _{r
_{0}}, \mathcal{L} _{r _{0}}) $ is
trivial. It follows that the groups $H _{2} ^{sec}
(\widetilde{S} _{r }, \mathcal{L} _{r})$ are naturally identified for
various $r$, using the connection $\mathcal{B} $. And we just write $A \in H _{2} ^{sec}
(\widetilde{S}, \mathcal{L}) $ for an element.

Then we denote by $$\overline{\mathcal{M}} ( \{\Theta _{r}
\},A) \subset \overline{\mathcal{M}} (\{\Theta _{r}  \})$$
the subset corresponding to relative class $A \in H _{2}
^{sec} (\widetilde{S}, \mathcal{L}) $ curves.

Let $\{\Theta _{r} =  (\widetilde{{S}}_{r}, S _{r},  \mathcal{L}_{r},
\mathcal{A}_{r})\}$ be a Hamiltonian structure, then for each end $e _{i} $ of $S _{r}$ we have a Floer chain complex  $$
CF (\mathcal{A} _{i}):= CF (L ^{0} _{i}, L ^{1} _{i}, \mathcal{A} _{i}, \{j _{z} \} ),$$ 
(independent of $r$ by part \ref{item:rinvariant} of
Definition \ref{def:HamiltonianStructure}) generated over
$\mathbb{Q}$  by $\mathcal{A} _{i} $-flat sections of $[0,1]
\times M$, with boundary on $L ^{0} _{0}, L ^{0} _{1}  $.
This chain complex is defined as in
~\cite[Section 6.1]{cite_SavelyevGlobalFukayacategoryI}.  
\begin{definition}
We say that $\{\Theta _{r} \}$ is
$A$-\textbf{\emph{regular}} if:
\begin{itemize}
	\item The pairs  $(\mathcal{A} _{i}, \{j _{z} \} )$ are
	regular so that the Floer chain complexes $CF (\mathcal{A}
	_{i})$ are defined.
	\item ${\mathcal{M}} (\{\Theta _{r} \},  A)$  is regular,
(transversely cut out).
\end{itemize}
And we say that $\{\Theta _{r} \}$ is \textbf{\emph{regular}} if it is $A$-regular for all $A$. We say that $\{\Theta _{r} \}$ is $A$-\textbf{\emph{admissible}} if 
there are no elements $$(\sigma, r) \in \overline{\mathcal{M}} (\{\Theta _{r} \},  A),$$  for $r$ in a neighborhood of the boundary of $\mathcal{K}$. 

\end{definition}

\begin{definition}  \label{definition:concordance} 
Given a pair $\{\Theta _{r} ^{i}  \} = \{\widetilde{{S}} _{r} ^{0} , {S}_{r} ^{0},
      \mathcal{L}_{r} ^{0}, \mathcal{A} _{r} ^{0}\} _{\mathcal{K}} $, $i=1,2$, of Hamiltonian structures
we say that they are \textbf{\emph{concordant}} if the
following holds. 
There is a Hamiltonian structure $$\{\mathcal{T} _{r} \}= \{\widetilde{{T}}  _{r},
{T}_{r} , \mathcal{L}'_{r}, \mathcal{A}' _{r}
   \} _{\mathcal{K} \times [0,1]} ,$$ with
an oriented diffeomorphism (in the natural sense, preserving all structure)
$$  \{\widetilde{{S}} _{r} ^{0} , {S}_{r} ^{0} ,
      \mathcal{L}_{r} ^{0}, \mathcal{A} _{r} ^{0} 
      \} _{ \mathcal{K} ^{op}}     \sqcup \{\widetilde{{S}} _{r} ^{1}, {S}_{r} ^{1} ,
      \mathcal{L}_{r} ^{1}, \mathcal{A} _{r} ^{1}
      \} _{ \mathcal{K}} 
      \to  \{\widetilde{{T}} _{r},
{T} _{r}, \mathcal{L}'_{r}, \mathcal{A}' _{r
      } \} _{\mathcal{K} \times \partial I} ,$$ 
      where $op$ denotes the opposite orientation for $\mathcal{K}$.
\end{definition}
\begin{definition} We say that a Hamiltonian structure 
   $\{\Theta _{r} \} = \{\widetilde{{S}} _{r}, {S}_{r},
   \mathcal{L}_{r}, \mathcal{A} _{r} \} _{\mathcal{K}} $ is
	 \textbf{\emph{taut}} if the following holds. For any pair
	 $r _{1}, r _{2} \in \mathcal{K}  $, $\Theta _{r _{1} }$
	 is concordant to $\Theta _{r _{2}} $ by a concordance
	 $\{\widetilde{{T}}  _{\tau}, {T}_{\tau},
	 \mathcal{L}'_{\tau}, \mathcal{A}' _{\tau} \} _{[0,1]}$, which is a hyper taut Hamiltonian structure.
\end{definition}

\begin{definition}  \label{definition:isotopy} 
Given an $A$-admissible pair $\{\Theta _{r} ^{i}  \} $, $i=1,2$, of Hamiltonian structures, we say that they are $A$-\emph {\textbf{admissibly concordant}} if there
is an $A$-admissible Hamiltonian structure $$\{\widetilde{{T}}  _{r},
{T}_{r} , \mathcal{L}'_{r}, \mathcal{A}' _{r}
   \} _{\mathcal{K} \times [0,1]}, $$ which furnishes a concordance. If this concordance is in addition a taut Hamiltonian structure, then we say that these pairs are \textbf{\emph{$A$-admissibly taut concordant}}. 
\end{definition}
\begin{lemma} \label{lemma:gluing} 
Let $\{\Theta _{r}\} = \{\widetilde{{S}} _{r}, {S}_{r}, \mathcal{L}_{r}, \mathcal{A} _{r} \}$ be $A$-regular and $A$-admissible, with $S _{r} $ having one distinguished
negative end $e _{0} $, and let $\gamma _{0} $ be the
asymptotic constraint of $A$ at the $e _{0} $ end. 
Define $$ev _{A} = ev (\{\Theta _{r} \}, A)  = \# {\mathcal{M}} (\{\Theta _{r} \},
A) \cdot \gamma _{0} \in CF (\mathcal{A} _{0}), $$ 
where $\# {\mathcal{M}} (\{\Theta _{r} \},  A)$ means signed count of elements when the dimension is 0, and is otherwise set to be zero.
Furthermore, suppose that $CF (\mathcal{A} _{0} ) $ is perfect.
Then the count $\# {\mathcal{M}} (\{\Theta _{r} \},
A)$ depends only on the $A$-admissible concordance class of
$\{\Theta _{r} \}$, that is the homology class of $ev _{A}$ is
an invariant of the $A$-admissible concordance class of
$\{\Theta _{r}\}$.
\end{lemma}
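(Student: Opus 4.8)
The plan is to run the standard parametrized cobordism argument, taking the concordance itself as the cobordism. We may assume that $\mathcal{M}(\{\Theta^0_r\}, A)$ and $\mathcal{M}(\{\Theta^1_r\}, A)$ are $0$-dimensional over $\mathcal{K}$, since otherwise both counts vanish by definition and the assertion is vacuous. Fix an $A$-admissible concordance $\{\mathcal{T}_r\} = \{\widetilde{T}_r, T_r, \mathcal{L}'_r, \mathcal{A}'_r\}_{\mathcal{K}\times I}$ between $\{\Theta^0_r\}$ and $\{\Theta^1_r\}$ as in Definition \ref{definition:isotopy}, and consider the parametrized moduli space $\overline{\mathcal{M}}(\{\mathcal{T}_r\}, A)$ living over $\mathcal{K}\times I$. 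Adjoining the interval parameter to the $0$-dimensional $\mathcal{K}$-problem raises the virtual dimension by one, so the open part $\mathcal{M}(\{\mathcal{T}_r\}, A)$ is a $1$-manifold. The goal is to show that this is a compact oriented cobordism whose two horizontal boundary faces $\mathcal{K}\times\{0\}$ and $\mathcal{K}\times\{1\}$ recover $\mathcal{M}(\{\Theta^0_r\}, A)$ and $\mathcal{M}(\{\Theta^1_r\}, A)$, so that the signed counts at the two ends agree.

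First I would establish regularity for the family. Since $\{\Theta^0_r\}$ and $\{\Theta^1_r\}$ are $A$-regular by hypothesis, I would perturb the data $(\mathcal{A}'_r, \{j_z\})$ of the concordance in the interior of $\mathcal{K}\times I$, holding the boundary data over $\mathcal{K}\times\partial I$ fixed, so as to make $\mathcal{M}(\{\mathcal{T}_r\}, A)$ transversely cut out; this is a Sard--Smale argument in the space of admissible perturbations, and the orientation of the resulting $1$-manifold is induced from the spin and orientation data of the branes as in \cite{cite_SavelyevGlobalFukayacategoryI}, the $\mathcal{K}^{op}$ convention of Definition \ref{definition:concordance} being precisely what makes the two horizontal faces enter the boundary with opposite signs. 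Next I would invoke Gromov--Floer compactness: the vanishing of the coupling form on the total Lagrangian together with the energy identity $e(\sigma) = \int \sigma^*\Omega_{\mathcal{A}'}$ and Lemma \ref{lemmaInvariantPairing} give the uniform energy bound needed to compactify $\mathcal{M}(\{\mathcal{T}_r\}, A)$ to a compact topological $1$-manifold with boundary $\overline{\mathcal{M}}(\{\mathcal{T}_r\}, A)$.

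I would then enumerate the codimension-one degenerations. These are: (i) the horizontal faces $\mathcal{K}\times\{0,1\}$, which by the structure-preserving diffeomorphism of Definition \ref{definition:concordance} are exactly $\mathcal{M}(\{\Theta^0_r\}, A)\sqcup\mathcal{M}(\{\Theta^1_r\}, A)$; (ii) the vertical face $\partial\mathcal{K}\times I$, which carries no curves because the concordance is $A$-admissible; (iii) sphere or disk bubbling, excluded as a codimension $\geq 2$ phenomenon by monotonicity of $(M,\omega)$ and of the branes (minimal Maslov number $\geq 2$), so no bubbling occurs in a generic $1$-parameter family; and (iv) Floer breaking of a trajectory at the distinguished end $e_0$, namely a main section asymptotic to some $\gamma_0'$ glued to a rigid Floer strip from $\gamma_0'$ to $\gamma_0$. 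Stratum (iv) is the only term that a priori obstructs a clean cobordism, and its contribution is weighted by the Floer differential coefficients of $CF(\mathcal{A}_0)$.

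This is where perfectness enters, and it is the crux. Since $CF(\mathcal{A}_0)$ is perfect its Floer differential vanishes, so there are no rigid trajectories feeding stratum (iv); counting the boundary points of the compact $1$-manifold with sign therefore yields $\#\mathcal{M}(\{\Theta^0_r\}, A) - \#\mathcal{M}(\{\Theta^1_r\}, A) = \pm\langle \partial_{CF}(\,\cdot\,),\gamma_0\rangle = 0$. Hence $ev_A(\{\Theta^0_r\}) = ev_A(\{\Theta^1_r\})$ already at the chain level; moreover, $\partial_{CF}=0$ makes $\gamma_0$ a nonzero generator of $H(CF(\mathcal{A}_0))=CF(\mathcal{A}_0)$, so $[ev_A]$ is well defined and its coefficient $\#\mathcal{M}$ is recovered from it, giving both assertions simultaneously. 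I expect the main obstacle to lie not in the formal bookkeeping but in the analytic input behind it: achieving family transversality with the boundary data held rigidly fixed, and, above all, the compactness and gluing needed to identify stratum (iv) precisely with the $CF(\mathcal{A}_0)$-differential so that perfectness forces it to vanish, where the monotonicity-based exclusion of bubbling in the parametrized setting is the delicate point.
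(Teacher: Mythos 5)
Your proposal follows essentially the same route as the paper's own proof: take the $A$-admissible concordance as a parametrized cobordism, obtain a compact oriented one-dimensional moduli space over $\mathcal{K}\times[0,1]$, note that admissibility kills the vertical face, that perfectness of $CF(\mathcal{A}_0)$ makes the Floer-breaking contributions cancel, and conclude that the boundary is $\mathcal{M}(\{\Theta^0_r\}^{op},A)\sqcup\mathcal{M}(\{\Theta^1_r\},A)$. The only quibble is that perfectness gives vanishing of the \emph{signed count} of broken configurations rather than their nonexistence, but your displayed identity $\#\mathcal{M}(\{\Theta^0_r\},A)-\#\mathcal{M}(\{\Theta^1_r\},A)=\pm\langle\partial_{CF}(\cdot),\gamma_0\rangle=0$ is the correct bookkeeping and matches the paper's argument.
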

\begin{proof}
Suppose we are given an $A$-admissible concordance (which we may assume to be regular)
$$\mathcal{T} = \{\widetilde{{T}}  _{r}, {T}_{r},
\mathcal{L}'_{r}, \mathcal{A}' _{r} \} _{\mathcal{K} \times
[0,1]},  $$ 
   between Hamiltonian structures $\{\Theta _{r} ^{0}\}$ and $\{\Theta _{r} ^{1}\}$.  
Then we get a one dimensional compact moduli space
${{\mathcal{M}}} (\mathcal{T}, A)$. By assumption
on the perfection of $CF (\mathcal{A} _{0})$, boundary contributions from Floer degenerations cancel out, so that the 
boundary is:
\begin{equation*}
\partial {{\mathcal{M}}} (\mathcal{T} , A) = {\mathcal{M}}
(\{\Theta ^{0} _{r} \} ^{op}, A)  \sqcup  {\mathcal{M}}
(\{\Theta ^{1} _{r} \}, A)
\end{equation*}
where $op$ denotes opposite orientation. From which the result follows.
\end{proof}

\begin{definition}\label{def_EvaluationTotal}
Let $\{\Theta _{r}\} _{r \in
\mathcal{K} }$, be a Hamiltonian structure, with $\mathcal{K} $ possibly with boundary, with $S _{r} $ having one
distinguished negative end $e _{0} $. 
Define:
\begin{equation}\label{eq_}
  ev(\{\Theta _{r}\}) = \sum _{A} ev (\{\Theta _{r} \}, A)
	\in CF (\mathcal{A}  _{0}),
\end{equation}
where the sum is over all classes $A \in H _{2} ^{sec}
(\widetilde{S}, \mathcal{L} )$. 
\end{definition}

In general $ev(\{\Theta _{r}\})$ is not closed, but we have:
\begin{lemma} \label{lem_evaluation}
Let $\{\Theta _{r}\} _{r \in \mathcal{K} }$ be regular, with
$\mathcal{K} $ having no boundary, with $S _{r} $ having one
distinguished negative end $e _{0} $.
Define
\begin{equation}\label{eq_}
  ev(\{\Theta _{r}\}) = \sum _{A} ev (\{\Theta _{r} \}, A)
	\in CF (\mathcal{A}  _{0}),
\end{equation}
Then $ev(\{\Theta _{r}\})$ is a cycle, whose homology class
depends only on the concordance class of $\{\Theta _{r}\}$.
\end{lemma}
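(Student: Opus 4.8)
The plan is to read off both assertions from the boundary structure of the Gromov--Floer compactifications of the one--dimensional pieces of the relevant moduli spaces, in the spirit of Lemma \ref{lemma:gluing}, but now summing over all relative classes $A$ so that no perfection hypothesis on $CF(\mathcal{A}_0)$ is needed. Throughout I would use that, by monotonicity together with the energy identity $e(\sigma) = \int_S \sigma^* \Omega_{\mathcal{A}}$ and the vanishing of $\Omega_{\mathcal{A}}$ on $\mathcal{L}$, the Floer energy of a section is controlled by its relative class, so that for each fixed output generator and each fixed expected dimension only finitely many classes $A$ contribute. This makes the sum defining $ev(\{\Theta_r\})$ a finite, hence well defined, chain in $CF(\mathcal{A}_0)$. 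Compactness of the moduli spaces I would take from the Gromov--Floer compactification of Section \ref{sec:ModuliSpacesHamStructures}, available once Lemma \ref{lemmaInvariantPairing} is established.

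For the cycle assertion, fix an $\mathcal{A}_0$--flat generator $\gamma$ of $CF(\mathcal{A}_0)$ and consider the union over those classes $A$ whose asymptotic constraint at $e_0$ is $\gamma$ and for which $\mathcal{M}(\{\Theta_r\}, A)$ is one--dimensional. Its compactification is a compact one--manifold with boundary, whose codimension-one strata I would enumerate. The genuine contributions come from Floer breaking at the single negative end $e_0$: a rigid section in a class $A'$, asymptotic to some generator $\gamma'$, glued to a rigid Floer trajectory joining $\gamma'$ and $\gamma$; the signed count of such strata is exactly the $\gamma$--coefficient of $\partial\, ev(\{\Theta_r\})$. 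Because $\mathcal{K}$ has no boundary there is no stratum in which $r$ runs off to $\partial \mathcal{K}$, and because a compact one--manifold has an even number of boundary points, summing over all $\gamma$ yields $\partial\, ev(\{\Theta_r\}) = 0$.

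For the concordance invariance, let $\mathcal{T} = \{\widetilde{T}_r, T_r, \mathcal{L}'_r, \mathcal{A}'_r\}_{\mathcal{K} \times [0,1]}$ be a regular concordance from $\{\Theta^0_r\}$ to $\{\Theta^1_r\}$ as in Definition \ref{definition:concordance}; note that $\mathcal{K} \times [0,1]$ has boundary exactly $\mathcal{K} \times \{0,1\}$ since $\mathcal{K}$ is closed. I would set $ev(\mathcal{T}) = \sum_A ev(\mathcal{T}, A) \in CF(\mathcal{A}_0)$ and examine, for each $A$ with $\mathcal{M}(\mathcal{T}, A)$ one--dimensional, the boundary of its compactification. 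Its codimension-one strata are of three kinds: the restriction to $\mathcal{K} \times \{0\}$, recovering $\mathcal{M}(\{\Theta^0_r\}, A)$; the restriction to $\mathcal{K} \times \{1\}$, recovering $\mathcal{M}(\{\Theta^1_r\}, A)$, the two entering with opposite induced orientations as encoded by the $\mathcal{K}^{op}$ of Definition \ref{definition:concordance}; and Floer breaking at $e_0$, whose signed count is the matching coefficient of $\partial\, ev(\mathcal{T})$. Setting the total signed boundary count to zero and summing over $A$ gives $ev(\{\Theta^0_r\}) - ev(\{\Theta^1_r\}) = \pm\, \partial\, ev(\mathcal{T})$, so the two cycles are homologous and the homology class is a concordance invariant.

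The step I expect to require genuine care is the control of codimension-one bubbling in the monotone Lagrangian setting: a disk of Maslov index $2$ could in principle bubble off the boundary condition $\mathcal{L}$ and create extra one--dimensional strata in the enumerations above. I would handle this in the same way the complexes $CF(\mathcal{A}_i)$ are themselves defined, using that the fibers of $\mathcal{L}$ are monotone, oriented, spin Lagrangian branes, so that such disk contributions are governed by the disk potential $m_0$ and either cancel in pairs or are already absorbed into the differential $\partial$ on $CF(\mathcal{A}_0)$; holomorphic sphere bubbling and multiply covered configurations occur in codimension at least two by monotonicity and regularity and so do not affect the count. Checking that these strata contribute nothing to $\partial\, ev$ beyond what $\partial$ already accounts for is the main obstacle; the accompanying orientation and sign bookkeeping is routine and parallel to Lemma \ref{lemma:gluing}.
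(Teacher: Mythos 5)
Your proposal is correct and follows essentially the same route as the paper: the paper's own (sketched) proof also forms the union $\mathcal{M}^1$ of all one-dimensional moduli spaces (finite by monotonicity), identifies the boundary of this compact oriented one-manifold with Floer breaking at the negative end $e_0$ (no strata from $r$ escaping since $\mathcal{K}$ is closed), and deduces $\partial\, ev = 0$ and the chain-homotopy identity $\partial\, ev(\mathcal{T}) = ev(\{\Theta^1_r\}) - ev(\{\Theta^0_r\})$ for a concordance. Your added attention to Maslov-index-$2$ disk bubbling is a reasonable extra precaution that the paper's sketch leaves implicit under the heading of ``standard Floer theory'' in the monotone brane setting.
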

\begin{proof} [Proof]
We only sketch the proof since this is standard Floer
theory. The proof is formally identical to the standard
proof that Floer continuation maps are chain homotopy maps.

Set:
$$\mathcal{M}  ^{1} = \cup _{A} {\mathcal{M}} (\{\Theta _{r}\},
A),$$ where the sum is over all $A$ such that the expected
dimension of ${\mathcal{M}} (\{\Theta _{r}\},
A)$ is one. As usual this is a finite sum by monotonicity.
Then $\mathcal{M} ^{1}$ is a one dimensional oriented
manifold with boundary.

Then $$\#\partial \mathcal{M}  ^{1} =0,$$ where this is the signed
count of elements.
As $\mathcal{K} $ has no boundary, this is a signed
count of holomorphic buildings (broken flow lines) with
a pair of components. One component is an
element $\sigma $ of ${\mathcal{M}} (\{\Theta _{r}\}, A')$,
for some $A'$, s.t. the expected dimension of ${\mathcal{M}}
(\{\Theta _{r}\}, A')$ is zero, i.e.  contributing to
$ev(\{\Theta _{r}\})$.  The other component
corresponds to a contribution to the Floer boundary of $\gamma
_{A '}$, where the latter is the asymptotic constraint of
$A'$. 

As usual all such boundary contributions to $\partial \mathcal{M}  ^{1}$ that can happen do happen (by a standard gluing argument). It then readily
follows, since $\#\partial \mathcal{M}  ^{1} = 0$, that the Floer
differential of $ev(\{\Theta _{r}\})$ is zero.

The second part of the lemma is analogous. Let 
$$\mathcal{T} = \{\widetilde{{T}}  _{r}, {T}_{r},
\mathcal{L}'_{r}, \mathcal{A}' _{r} \} _{\mathcal{K} \times
[0,1]},  $$ be a concordance (which we may assume to be regular) between Hamiltonian structures $\{\Theta _{r}
^{0}\}$ and $\{\Theta _{r} ^{1}\}$, with the latter as in
the hypothesis of the lemma.

Set $$\mathcal{M}  ^{1} = \cup _{A} {\mathcal{M}} (\mathcal{T}, A),$$
Then again this is a one dimensional oriented manifold with
boundary. Analyzing  $\partial \mathcal{M}  ^{1}$, and since $\#M ^{1} =0$, we get that $$
\partial \, {ev}   ((\{\mathcal{T} \}))  = ev(\{\Theta ^{1} _{r}\}) - ev(\{\Theta ^{0}
_{r}\}).$$ And this finishes the proof.



\end{proof}

\subsection {Area of fibrations} \label{section:areafib}
\begin{definition} \label{def:area}
For a Hamiltonian connection $\mathcal{A} $ on a bundle $M
\hookrightarrow \widetilde{S}  \to S$, with $S$ a Riemann
surface, define a 2-form $\alpha _{\mathcal{A} }$ on $S$ by:
\begin{equation} \label{eq:alphaA} \alpha _{\mathcal{A}}  (v, jv) :=  |R _{\mathcal{A}} (v,jv)| _{+},
\end{equation}
where $v \in T _{z} S $, $R _{\mathcal{A}} (v,w)$ as before
identified with a zero mean smooth function on the fiber
$\widetilde{S} _{z}$ over $z$ and where $|\cdot| _{+} $ is
operator: $|H| _{+} = \max _{\widetilde{S} _{z}} H,$ i.e.
the ``positive Hofer norm''.
\end{definition} 
And define
\begin{align} \label{eqDefArea}
\area  (\mathcal{A})  :=  \int _{S} \alpha _{\mathcal{A}}.
\end{align} 
Note that if $\Omega _{\mathcal{A}} $ is the coupling form
of $\mathcal{A}$, as before, then
${\Omega} _{\mathcal{A}}   + \pi ^{*} (\alpha _{\mathcal{A}})$ is \emph{nearly
symplectic}, meaning that
\begin{equation*} 
   \forall z \in S \, \forall v  \in T _{z} S: ({\Omega} _{\mathcal{A}}   + \pi ^{*} (\alpha)) ( \widetilde{v},
\widetilde{jv}) \geq 0,
\end{equation*}
where $ \widetilde{v}, \widetilde{jv} $ are the
$\mathcal{A}$-horizontal lifts of  $v, jv \in T _{z} S $.

Note that $\area (\mathcal{A})$ could be infinite if there are no constraints on $\mathcal{A}$ at the ends.

\begin{lemma} \label{lemma:energypositivity1}
Let $(\widetilde{S}, S, \mathcal{L}, \mathcal{A} )$ be
a Hamiltonian structure. For $\sigma \in \overline{\mathcal{M}} (\widetilde{{S}}, S,  \mathcal{L},
\mathcal{A})$
 we have  
  \begin{equation*}
    - \int _{S} \sigma ^{*}
    {\Omega} _{\mathcal{A}}     \leq \area  
  (\mathcal{A}).
  \end{equation*}
 \end{lemma}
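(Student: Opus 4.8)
The plan is to compare the pullback of the coupling form $\Omega_{\mathcal{A}}$ with the nearly-symplectic form $\Omega_{\mathcal{A}} + \pi^*(\alpha_{\mathcal{A}})$ introduced just before the statement, and to exploit that $\sigma$ is $J(\mathcal{A})$-holomorphic. First I would recall that for a $J(\mathcal{A})$-holomorphic section $\sigma$ the Floer energy satisfies $e(\sigma) = \int_S \sigma^*\Omega_{\mathcal{A}} \geq 0$, so in particular $\int_S \sigma^*\Omega_{\mathcal{A}} \geq 0$. Hence $-\int_S \sigma^*\Omega_{\mathcal{A}} \leq 0$, and if $\area(\mathcal{A})$ were always nonnegative this would already be trivial; but the point of the lemma is a sharper two-sided control, so the substance is to bound $-\int_S\sigma^*\Omega_{\mathcal{A}}$ by $\int_S\sigma^*\pi^*(\alpha_{\mathcal{A}})$ and then observe that the latter is at most $\area(\mathcal{A})$.

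The key pointwise step is the inequality
\begin{equation*}
\bigl(\Omega_{\mathcal{A}} + \pi^*(\alpha_{\mathcal{A}})\bigr)(\widetilde{v},\widetilde{jv}) \geq 0
\end{equation*}
on $\mathcal{A}$-horizontal lifts, which was asserted in the paragraph preceding the lemma. I would integrate the $\sigma$-pullback of the nearly-symplectic form over $S$. Because $\sigma$ is holomorphic, its image tangent spaces are $J(\mathcal{A})$-invariant, so writing $d\sigma(v) = \widetilde{v} + (\text{vertical part})$ and using holomorphicity to pair $v$ with $jv$, the pullback $\sigma^*\Omega_{\mathcal{A}}(v,jv)$ decomposes into a nonnegative vertical contribution plus the horizontal term controlled by $\alpha_{\mathcal{A}}$. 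Concretely, nonnegativity of $\sigma^*\bigl(\Omega_{\mathcal{A}}+\pi^*\alpha_{\mathcal{A}}\bigr)$ gives, after integration,
\begin{equation*}
-\int_S \sigma^*\Omega_{\mathcal{A}} \leq \int_S \sigma^*\pi^*(\alpha_{\mathcal{A}}) = \int_S \alpha_{\mathcal{A}} = \area(\mathcal{A}),
\end{equation*}
where the middle equality uses that $\pi\circ\sigma = \id_S$, so $\sigma^*\pi^* = \id$ on forms on $S$.

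The main obstacle I anticipate is justifying the integrations over the noncompact surface $S$ (it has strip ends), so that all the integrals converge and Stokes-type manipulations are valid. This requires using that $\sigma$ has finite Floer energy and is asymptotically flat at the ends, together with the compatibility hypotheses: at each end $\mathcal{A}$ is flat and $\mathbb{R}$-translation invariant, hence $R_{\mathcal{A}} = 0$ there, so $\alpha_{\mathcal{A}}$ vanishes near the ends and $\int_S\alpha_{\mathcal{A}}$ is a genuine finite quantity supported on a compact part of $S$. The same flatness forces the vertical energy density of $\sigma$ to decay at the ends, so $\int_S\sigma^*\Omega_{\mathcal{A}}$ converges. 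Once finiteness and the pointwise nearly-symplectic inequality are in hand, the rest is the straightforward integration argument above; I expect the only real care to be in the asymptotic analysis at the strip ends and in matching the orientation conventions so that the sign in $-\int_S\sigma^*\Omega_{\mathcal{A}}$ comes out correctly.
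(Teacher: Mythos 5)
Your proposal is correct and follows essentially the same route as the paper, whose proof consists precisely of observing that $\int_S \sigma^*(\Omega_{\mathcal{A}}+\pi^*\alpha_{\mathcal{A}})\geq 0$ for a $J(\mathcal{A})$-holomorphic section and then peeling off $\int_S\alpha_{\mathcal{A}}=\area(\mathcal{A})$; your horizontal/vertical decomposition and the attention to convergence at the strip ends just make explicit what the paper leaves to the reader. The detour in your first paragraph about $e(\sigma)=\int_S\sigma^*\Omega_{\mathcal{A}}$ is not needed for (and not used in) the actual argument.
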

\begin{proof}
We have $$\int _{S} \sigma ^{*}
    ({\Omega} _{\mathcal{A}} + \pi ^{*} \alpha) \geq 0, $$ 
whenever ${\Omega} _{\mathcal{A}}   + \pi ^{*} (\alpha)$ is nearly symplectic, by the defining properties of $J _{\mathcal{A}} $ and by $\sigma$ being $J _{\mathcal{A}} $-holomorphic.
From which our conclusion follows.
\end{proof}

\begin{lemma} \label{lemmaInvariantPairing} 
Let $\{(\widetilde{S} _{t}, S _{t},
\mathcal{L}_{t}, 
\mathcal{A} _{t}) \} _{[0,1]} $ be a taut concordance.
Let $\sigma _{j} $, $j=0,1$ be asymptotically flat sections of $\widetilde{S} _{j}$ in relative class $A$. 
Then
%
$$
   - \int _{S _{1} } \sigma _{1}  ^{*}  {\Omega} _{\mathcal{A} _{1} }  =  - \int _{S _{0} } \sigma _{0}  ^{*}  {\Omega} _{\mathcal{A} _{0}},
$$
whenever both integrals are finite. In particular, for a Hamiltonian structure  $(\widetilde{S}, S,
\mathcal{L}, 
   \mathcal{A})$,
$\int _{S} \sigma  ^{*}  {\Omega} _{\mathcal{A}}$ depends only on the relative class of $A$, whenever the integral is finite.
%
%
%
%
 \end{lemma}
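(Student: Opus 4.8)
The plan is to exploit that the coupling form $\Omega_{\mathcal{A}}$ is a \emph{closed} $2$-form which both (i) vanishes on the Lagrangian boundary data and (ii) pulls back to zero along a flat section, so that $\int_S \sigma^*\Omega_{\mathcal{A}}$ becomes a Stokes-theorem invariant of the relative class. I would first dispatch the ``in particular'' clause, which needs no tautness. Given two sections $\sigma,\sigma'$ of a single $(\widetilde{S},S,\mathcal{L},\mathcal{A})$ in the same relative class, Definition \ref{def:relativeclass} provides a $3$-chain $C\subset\widetilde{S}$ with $\partial C = \sigma(S)-\sigma'(S)+C_{\partial}+C_{\infty}$, where $C_{\partial}$ lies in $\mathcal{L}$ and $C_{\infty}$ lies in the common flat asymptotic sections at the ends. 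Since $d\Omega_{\mathcal{A}}=0$, Stokes gives $\int_{\sigma(S)}\Omega_{\mathcal{A}}-\int_{\sigma'(S)}\Omega_{\mathcal{A}} = -\int_{C_{\partial}}\Omega_{\mathcal{A}}-\int_{C_{\infty}}\Omega_{\mathcal{A}}$. The first term vanishes because $\Omega_{\mathcal{A}}$ vanishes on $\mathcal{L}$ (as noted just before this lemma); the second vanishes because on a flat section the coordinate velocities are $\mathcal{A}$-horizontal, and $\Omega_{\mathcal{A}}$ on horizontal lifts equals the curvature $R_{\mathcal{A}}$ by \eqref{eq:couplingvalue}, which is zero at the flat ends.

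For the concordance statement I would upgrade this to the total space. A taut concordance is in particular a hyper taut Hamiltonian structure over $[0,1]$, so by definition there is a global Hamiltonian connection $\mathcal{A}$ on $\bm{\widetilde{S}}\to\textbf{S}$ extending every $\mathcal{A}_t$, preserving $\textbf{L}=\cup_t\mathcal{L}_t$, and with coupling form $\Omega_{\mathcal{A}}$ \emph{vanishing on $\textbf{L}$}. The first step is to identify $\Omega_{\mathcal{A}}|_{\widetilde{S}_t}$ with the fiberwise coupling form $\Omega_{\mathcal{A}_t}$: since $\mathcal{A}$ extends $\mathcal{A}_t$, horizontal lifts of vectors tangent to $S_t$ agree, and the restricted form has fiber restriction $\omega$ with curvature values prescribed by $R_{\mathcal{A}_t}$, so the two coincide by uniqueness of the coupling form generating a connection. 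Consequently $\int_{S_t}\sigma_t^*\Omega_{\mathcal{A}_t}=\int_{\sigma_t(S_t)}\Omega_{\mathcal{A}}$ for $t=0,1$.

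It then suffices to produce a $3$-chain in $\bm{\widetilde{S}}$ interpolating $\sigma_0(S_0)$ and $\sigma_1(S_1)$ with remaining boundary in $\textbf{L}$ and in flat sections. Using the connection $\mathcal{B}$ of Section \ref{sec_Family version}, whose holonomy preserves the Hamiltonian structure, is trivial over the ends, and (as $[0,1]$ is contractible) identifies the groups $H_2^{sec}(\widetilde{S}_t,\mathcal{L}_t)$, I would parallel transport $\sigma_0$ to a section $\sigma_0^1$ of $\widetilde{S}_1$; the sweep $W_1=\cup_t \sigma_0^t(S_t)$ is a $3$-chain with $\partial W_1 = \sigma_0^1(S_1)-\sigma_0(S_0)$ plus side terms in $\textbf{L}$ over $\partial S$ and end terms in flat sections. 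Because $\sigma_0^1$ and $\sigma_1$ both represent $A$ in $H_2^{sec}(\widetilde{S}_1,\mathcal{L}_1)$, they are homologous rel boundary as in the first paragraph, furnishing a second $3$-chain $W_2\subset\widetilde{S}_1$. Applying Stokes to $W_1\cup W_2$, using $d\Omega_{\mathcal{A}}=0$ together with the vanishing of $\Omega_{\mathcal{A}}$ on $\textbf{L}$ (here tautness is essential) and on flat sections, all boundary contributions except $\sigma_1(S_1)-\sigma_0(S_0)$ cancel, yielding the claimed equality; the overall minus signs are common to both sides and immaterial.

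The main obstacle I anticipate is the non-compactness at the strip-like ends. One must check that the $\mathcal{B}$-transport can be taken to respect the end structure, so that $\sigma_0^1$ has the same asymptotic constraints as $\sigma_0$, hence as $\sigma_1$; that the swept $3$-chain has a well-defined $\Omega_{\mathcal{A}}$-integral near infinity; and that the ``caps at infinity'' genuinely vanish rather than merely converge. This is where the finiteness hypothesis on both integrals, the $\mathbb{R}$-invariance and flatness at the ends, and the identity that flat sections pull $\Omega_{\mathcal{A}}$ back to zero must be combined with care; the contribution from the compact part is then a routine Stokes computation.
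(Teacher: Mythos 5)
Your proposal is correct and follows essentially the same route as the paper: both arguments rest on Stokes' theorem applied to the closed global coupling form $\Omega_{\mathcal{A}}$ supplied by hyper tautness, using its vanishing on $\textbf{L}$ and the fact that flat sections at the ends pull it back to zero. The only differences are organizational: the paper handles the non-compact ends by truncating at a finite $t$ and controlling the error via $C^1$-closeness of the two sections and smallness of the tail integrals (which is the rigorous resolution of the ``caps at infinity'' obstacle you flag), and it obtains the ``in particular'' clause as a corollary of the first part applied to the constant taut concordance rather than by a separate direct Stokes argument.
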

\begin{proof}
 By the hypothesis, there is a connection $\mathcal{A}$ on
 $\bm{\widetilde{S}} $, extending each $\mathcal{A} _{t}
 $ and such that $\Omega _{\mathcal{A}} $ vanishes on
 $ \textbf{L} \subset \bm{\widetilde{S}}. $  The first part then follows by Stokes theorem. 
Here are the details. For $\sigma _{j} $ as above and for
each end $e _{i} $, cut off the part of the section $\sigma
_{j} $ lying over $[0,1] \times (t _{\delta _{1}, \delta
_{2} }, \infty)$ in the corresponding strip end chart at the end. Here $t _{\delta _{1}, \delta _{2} } $ is such that $\sigma _{0}| _{[0,1] \times \{t\}}   $ is $C ^{1} $ $\delta _{1} $-close to $\sigma _{1}| _{[0,1] \times \{t\}} $ for all $t > t _{\delta _{1}, \delta _{2} } $ and for each end, and is such that $$\int _{[0,1] \times (t _{\delta _{1}, \delta _{2} }, \infty)} \sigma _{j} ^{*}| _{[0,1] \times (t _{\delta _{1}, \delta _{2} }, \infty)}  {\Omega} _{{\mathcal{A} _{j} }} < \delta _{2}, \, j=1,2,    $$ for each end $e _{i} $.
Call the sections with the ends cut off as above by $\sigma _{j} ^{\delta _{1}, \delta _{2}  }  $, they are sections over the compact surfaces $S ^{cut} _{j} $, with ends correspondingly cut off. Then by Stokes theorem, using that ${\Omega} _{{\mathcal{A}}}  $ is closed and using the vanishing of ${\Omega} _{{\mathcal{A}}} $ on $ \textbf{L}   $:   for each $\epsilon$ there exists $\delta _{1}, \delta _{2}  $ such that
   $$\int _{S ^{cut} _{1}} (\sigma _{1} ^{\delta _{1}, \delta _{2}}) ^{*} {\Omega} _{{ \mathcal{A}}} -   \int _{S ^{cut} _{0}} (\sigma _{0} ^{\delta _{1}, \delta _{2}}) ^{*} {\Omega} _{{ \mathcal{A}}} < \epsilon,$$
and $$\int _{S ^{cut} _{j}} (\sigma _{j} ^{\delta _{1}, \delta _{2}}) ^{*} {\Omega} _{{ \mathcal{A} _{j} }} - \int _{S _{j} } \sigma _{j}  ^{*}  {\Omega} _{\mathcal{A} _{j} }  <\epsilon, \, j=1,2.  $$

The last part of the lemma follows from the first. For if
$\mathcal{A}$ preserves $\mathcal{L}$ then $\Omega
_{\mathcal{A}} $ vanishes on $\mathcal{L}$, as previously
observed, and consequently
the corresponding constant concordance: $$\{(\widetilde{S}
_{t}, S _{t}, \mathcal{L}_{t}, \mathcal{A} _{t}) \}
_{[0,1]}, \quad \forall t \in [0,1]: (\widetilde{S}_{t}, S _{t}, \mathcal{L}_{t}, \mathcal{A} _{t}) = (\widetilde{S}, S, \mathcal{L}, \mathcal{A})   $$ is taut.
\end{proof}
\begin{definition}
For $\sigma$ a relative class $A$ section of $\Theta=(\widetilde{{S}}, {S}, \mathcal{L}, \mathcal{A}) $ let us call:
\begin{equation*}
-\int _{S} \sigma  ^{*}  {\Omega} _{\mathcal{A}},
\end{equation*}
the $\mathcal{A}$-\textbf{\emph{coupling area}} of $\sigma$,
denoted by $\carea (\Theta, \sigma)$,
 we may also write $\carea (\Theta, A)$ for the same quantity. By the lemma above this is an invariant of the taut concordance class of $\Theta$.
\end {definition}
\begin{definition} \label{def:small}
Given a Hamiltonian structure $\Theta=(\widetilde{S},S, \mathcal{L}, \mathcal{A})$  we will say that 
 $\Theta$ is  $A$-\textbf{\emph{small}} if $$\area (\Theta)<
\carea(\Theta,A).$$ 
\end{definition}

\begin{lemma} \label{lemma:smallempty}
   Suppose that $\Theta=(\widetilde{{S}}, {S}, \mathcal{L},
	 \mathcal{A})  $ is $A$-small then $\overline{\mathcal{M}}
	 (\Theta, A)$ is empty. Or for a contrapositive, if $\overline{\mathcal{M}} (\Theta, A)$ is non-empty then:
\begin{equation*}
\carea (\Theta,A) \leq \area (\Theta).
\end{equation*}
\end{lemma}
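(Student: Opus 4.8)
The plan is to prove the contrapositive asserted in the statement: assuming $\overline{\mathcal{M}}(\Theta,A)$ is non-empty, I will derive $\carea(\Theta,A) \leq \area(\Theta)$, which directly negates the defining $A$-smallness inequality $\area(\Theta) < \carea(\Theta,A)$ of Definition \ref{def:small} (recall $\area(\Theta) = \area(\mathcal{A})$). The two ingredients are already available: Lemma \ref{lemma:energypositivity1} bounds $-\int_S \sigma^* \Omega_{\mathcal{A}}$ above by $\area(\mathcal{A})$ for every genuine holomorphic section $\sigma$ in the moduli space, while Lemma \ref{lemmaInvariantPairing} identifies this quantity with $\carea(\Theta,A)$, depending only on the relative class $A$. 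Thus the clean case is immediate: if the chosen element of $\overline{\mathcal{M}}(\Theta,A)$ is an honest $J(\mathcal{A})$-holomorphic section $\sigma$ with boundary on $\mathcal{L}$ and with no bubbling or breaking, then $\carea(\Theta,A) = -\int_S \sigma^* \Omega_{\mathcal{A}} \leq \area(\Theta)$ at once, and we are done.

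The main obstacle is that a point of the Gromov--Floer compactification $\overline{\mathcal{M}}(\Theta,A)$ need not be an honest section: it may be a stable configuration consisting of a principal section component $\sigma_0$ of $\widetilde{S}\to S$ in some relative class $A_0$, together with a tree of holomorphic sphere bubbles in the fibers, boundary disk bubbles on $\mathcal{L}$, and broken Floer trajectories in the fibers over the strip-like ends. The plan here is to exploit monotonicity of $(M,\omega)$. Each bubble and each breaking trajectory is a nonconstant $\omega$-holomorphic curve, hence carries strictly positive symplectic area $\int \omega > 0$; since $\Omega_{\mathcal{A}}$ restricts to $\omega$ on the fibers and the $\Omega_{\mathcal{A}}$-integral is additive over the components of the stable map (whose total class is $A$), the class splits as $A = A_0 + (\text{bubble and breaking classes})$ and, translating into coupling areas via Lemma \ref{lemmaInvariantPairing}, one obtains $\carea(\Theta,A) = \carea(\Theta,A_0) - \sum_k \int \omega \leq \carea(\Theta,A_0)$.

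To close the argument I would note that the principal component $\sigma_0$ is itself a genuine finite-energy holomorphic section of $\widetilde{S}\to S$ with boundary on $\mathcal{L}$ — only its asymptotic constraints at the ends may have changed in the limit. Since the proof of Lemma \ref{lemma:energypositivity1} uses only near-symplecticity of $\Omega_{\mathcal{A}} + \pi^*\alpha_{\mathcal{A}}$ and is insensitive to the particular asymptotics, it applies verbatim to $\sigma_0$ and yields $\carea(\Theta,A_0) = -\int_S \sigma_0^* \Omega_{\mathcal{A}} \leq \area(\Theta)$. Chaining the two inequalities gives $\carea(\Theta,A) \leq \area(\Theta)$, contradicting $A$-smallness, so $\overline{\mathcal{M}}(\Theta,A)$ must be empty. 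I expect the only genuinely delicate point to be the bookkeeping of the class decomposition and verifying that energy is conserved in the Gromov--Floer limit, so that the bubble and breaking contributions appear with a definite positive sign; monotonicity is exactly what guarantees that these extra contributions can only lower $\carea(\Theta,A)$ relative to the principal component and hence cannot rescue a violation of the area bound.
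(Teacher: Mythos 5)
Your proposal is correct and follows essentially the same route as the paper, which disposes of the lemma in one line as ``a reformulation of Lemma \ref{lemma:energypositivity1}'' combined with the class-invariance of $\carea$ from Lemma \ref{lemmaInvariantPairing} and Definition \ref{def:small}. Your extra two paragraphs on bubbling and breaking only spell out what the paper absorbs into the statement of Lemma \ref{lemma:energypositivity1} (which is already phrased for elements of $\overline{\mathcal{M}}$), namely that vertical bubble and breaking components have non-negative $\Omega_{\mathcal{A}}$-integral and so can only reinforce the inequality.
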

\begin{proof} 
This is just a reformulation of Lemma \ref{lemma:energypositivity1}.
\end{proof}
  
\subsection {Gluing Hamiltonian structures} \label{section:gluing}

Let $\mathcal{D}$ denote the Riemann surface which is topologically  $D ^{2} - z _{0}$, $z _{0} \in \partial D ^{2} $,
endowed with a choice of a strip end chart at the end (positive or negative depending on context). The complex structure $j$ here is as induced from $\mathbb{C}$ under the assumed embedding $D ^{2} \subset \mathbb{C} $.

Let $(\widetilde{S}, S, \mathcal{L}, \mathcal{A})$ be
a Hamiltonian structure. We may cap off some of the open
ends $\{e _{i} \} _{i=0} ^{n} $ of ${S}$, by gluing at the ends copies of $\mathcal{D} $ with oppositely signed end.
More explicitly, in the strip coordinate charts at some, say positive, end 
$e _{i} $ of ${S}$, 
excise $[0,1] \times (t, \infty)$ for some $t>0$, call the
resulting surface ${S} - e _{i} $. Likewise, excise the negative end of $\mathcal{D} $, call this surface $\mathcal{D} - end$. Then glue ${S} -e _{i}  $ with  $\mathcal{D}-end$, 
along their new smooth boundary components. 
Let us denote the capped off surface by ${S} ^{/i}  $.  

Since $\widetilde{{S}}$ is naturally trivialized at the
ends, we may similarly cap off $\widetilde{\mathcal{S}} _{r}
$ over the $e _{i} $ end by gluing with the bundle
$\mathcal{D} \times M $ at the end, obtaining a Hamiltonian $M$ bundle $\widetilde{{S}}  ^{/i}
$ over ${S} ^{/i}$. 

Moreover, we have a certain gluing operation of Hamiltonian structures. In the case of ``capping off'' as above we glue $\Theta = (\widetilde{S}, S, \mathcal{L}, \mathcal{A})$ with the Hamiltonian structure $\Theta'  = (\mathcal{D} \times M, \mathcal{D}, \mathcal{L}', \mathcal{A}')  $ at the $e _{i} $ end, provided $\mathcal{A}'$ is compatible with the connection $\mathcal{A} _{i} $, in the sense of Section \ref{section:exact}, and provided 
$\mathcal{L}$ is compatible with $\mathcal{L}'$. The latter
means that $L ^{j} _{i} = {L'} ^{j} _{i}   $ where these are
the Lagrangians corresponding to the strip end chart trivialization of $\mathcal{L}, \mathcal{L}'$ at the corresponding ends, as in Definition \ref{def:respectsendstructure}.

Let us name the result of this capping off $\Theta \# _{i} \Theta' $. The following is immediate:
\begin{lemma} \label{lemma:immediategluing} Suppose that $\{\Theta _{r} \} _{\mathcal{K}} $, $\{\Theta' _{r} \} _{\mathcal{K}} $ with $\Theta' _{r} = (\mathcal{D} \times M, \mathcal{D}, \mathcal{L}' _{r} , \mathcal{A}' _{r} ) $ are taut Hamiltonian structures. Then:
\begin{equation*}
\{\Theta _{r} \# _{i}   \Theta' _{r}  \} _{\mathcal{K}} 
\end{equation*}
is taut, whenever the gluing operation is well-defined, that is whenever we have compatibility of connections and Lagrangian sub-fibrations at the corresponding end.
\end{lemma}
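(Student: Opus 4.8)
The plan is to reduce everything to the definition of tautness and to show that \emph{hyper} tautness survives the capping-off operation $\#_i$. Fix $r_1, r_2 \in \mathcal{K}$. Since $\{\Theta_r\}_{\mathcal{K}}$ is taut, there is a hyper taut concordance $\{\mathcal{T}_\tau\}_{[0,1]}$ from $\Theta_{r_1}$ to $\Theta_{r_2}$, equipped with a global connection $\mathcal{A}^{\mathcal{T}}$ (as in Property \ref{property:A}) extending the fiber connections $\mathcal{A}_\tau$, preserving $\textbf{L} = \bigcup_\tau \mathcal{L}_\tau$, and whose coupling form $\Omega_{\mathcal{A}^{\mathcal{T}}}$ vanishes on $\textbf{L}$. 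Likewise, tautness of $\{\Theta'_r\}_{\mathcal{K}}$ yields a hyper taut concordance $\{\mathcal{T}'_\tau\}_{[0,1]}$ from $\Theta'_{r_1}$ to $\Theta'_{r_2}$, with global connection $\mathcal{A}'$ and coupling form vanishing on the corresponding Lagrangian total $\textbf{L}'$.

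Next I would cap these two concordances together at the $e_i$ end, forming $\{\mathcal{T}_\tau \#_i \mathcal{T}'_\tau\}_{[0,1]}$. This gluing is well-defined for every $\tau$: by part \ref{item:rinvariant} of Definition \ref{def:HamiltonianStructure} the end data (the flat, $\mathbb{R}$-translation-invariant connection $\overline{\mathcal{A}}_i$ together with the end Lagrangians $L^0_i, L^1_i$) is independent of the family parameter, hence $\tau$-independent, and it matches between the two concordances precisely by the hypothesis that the original gluing $\Theta_r \#_i \Theta'_r$ is well-defined. Since capping commutes with restriction to $\tau \in \{0,1\}$, this glued family is a concordance between $(\Theta \#_i \Theta')_{r_1}$ and $(\Theta \#_i \Theta')_{r_2}$, so it remains only to verify that it is hyper taut.

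For this I would glue the two global connections along the seam. In a neighborhood of the capping region both $\mathcal{A}^{\mathcal{T}}$ and $\mathcal{A}'$ coincide with the same flat model $\overline{\mathcal{A}}_i$ (compatibility of Hamiltonian structures at the end), so they agree on the overlap and assemble into a smooth connection $\mathcal{A}^{\#}$ on the capped total space. By construction $\mathcal{A}^{\#}$ extends each fiber connection $\mathcal{A}_\tau \#_i \mathcal{A}'_\tau$ and preserves the glued Lagrangian total $\textbf{L} \cup_{\mathrm{seam}} \textbf{L}'$ (each piece is preserved, and the Lagrangians coincide at the seam since $L^j_i = {L'}^j_i$). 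The crux is that the coupling form is locally determined by the connection together with the fiberwise symplectic form: its restriction to each piece is the coupling form of the restricted connection, namely $\Omega_{\mathcal{A}^{\mathcal{T}}}$, respectively $\Omega_{\mathcal{A}'}$, while at the seam both reduce to the coupling form of the curvature-free model $\overline{\mathcal{A}}_i$. Hence $\Omega_{\mathcal{A}^{\#}}$ is the piecewise form and vanishes on $\textbf{L} \cup_{\mathrm{seam}} \textbf{L}'$, establishing hyper tautness of the glued concordance. As $r_1, r_2$ were arbitrary, $\{\Theta_r \#_i \Theta'_r\}_{\mathcal{K}}$ is taut.

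The step I expect to be the main obstacle — though it is exactly why the author can call the statement \emph{immediate} — is verifying that the coupling form of the glued connection genuinely vanishes on the glued Lagrangian. This rests entirely on the two input connections coinciding with the same flat translation-invariant reference near the seam, so that their curvatures, and hence via \eqref{eq:couplingvalue} the horizontal-horizontal parts of the coupling forms, agree and in fact vanish there; the fiberwise and horizontal-vertical parts match automatically. The remaining content is bookkeeping: confirming smoothness of $\mathcal{A}^{\#}$ across the seam and that the cap contributes no curvature beyond what is already controlled by the tautness of $\{\Theta'_r\}$.
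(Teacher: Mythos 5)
The paper offers no argument here (it declares the lemma immediate), and your overall strategy --- glue the two hyper taut concordances end-to-end and exhibit a glued global connection whose coupling form vanishes on the glued Lagrangian --- is surely the intended one. But the step you yourself single out as the crux is asserted on a premise the definitions do not supply. You claim that near the seam both family connections $\mathcal{A}^{\mathcal{T}}$ and $\mathcal{A}'$ ``coincide with the same flat model $\overline{\mathcal{A}}_i$, so they agree on the overlap.'' Part \ref{item:rinvariant} of Definition \ref{def:HamiltonianStructure} only pins down the \emph{fiberwise} connections at the ends: $e_{i,\tau}^*\mathcal{A}_\tau = \overline{\mathcal{A}}_i$ for each $\tau$. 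Property \ref{property:A} imposes no normal form on the components of the global connection in the concordance-parameter direction over the end region. Writing the global connection in the end trivialization \eqref{eqE} as $d + a_s\,ds + a_t\,dt + a_\tau\,d\tau$, the definitions fix $a_s, a_t$ but say nothing about $a_\tau$, which can differ between the two concordances; so the two global connections need not agree on any overlap, and $\mathcal{A}^{\#}$ is not yet defined.

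The repair is to first normalize each hyper taut concordance so that its global connection is of product form ($\overline{\mathcal{A}}_i$ tensor the trivial connection in $\tau$) near the end, and this requires a check you have not made: cutting off $a_\tau$ by $\chi(s)a_\tau$ introduces the curvature term $\chi'(s)\,a_\tau$ into $R(\partial_s,\partial_\tau)$, whose value via \eqref{eq:couplingvalue} must vanish pointwise on $L^j_i$. Preservation of $\textbf{L}$ only forces $a_\tau$ to be \emph{constant} on the connected Lagrangian $L^j_i$, and the zero-mean normalization over $M$ does not force that constant to be $0$; hyper tautness of the given concordance gives $\partial_s a_\tau|_{L^j_i}=0$ but again not $a_\tau|_{L^j_i}=0$. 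So either one must argue this constant away (or absorb it by a further modification of the connection), or one must build the requirement of standard end behaviour of the family connection into the definition, which is presumably what the author implicitly does. As written, your proof has a genuine, though localized and plausibly fixable, gap precisely at the point you identified as the main obstacle.
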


\begin{definition} \label{defInducedLoop}
Let $\pi: \mathbb{R} \to [0,1]$ denote the continuous
retraction map, sending $(-\infty, 0]$ to $0$, and sending
$[1,\infty)$ to $1$. Assuming the end $e _{0}$  of
$\mathcal{D} $ is positive, and using the coordinates of the
strip end chart $e _{0}: [0,1] \times (0, \infty) \to
\mathcal{D} $, fix the following parametrization $\zeta$ of the
boundary of $\mathcal{D} $.
$\zeta: \mathbb{R} \to \partial \mathcal{D}$, satisfies $\zeta (t) \in \{0\} \times (0, \infty)$ for $t \in (-\infty, 0) $, and 
$\zeta (t) \in \{1\} \times (0, \infty)$ for $t \in 
 (1, \infty) $. Given a smooth exact path 
$$p: [0,1] \to
Lag (M) $$ constant near $0,1$, let $\mathcal{L} _{p} \subset \partial \mathcal{D} \times M  $ denote the Lagrangian subfibration over $\partial \mathcal{D}$, with fiber over $r \in \partial \mathcal{D}$ given by $p \circ \pi (r)$. We say that a Lagrangian subfibration $\mathcal{L}$ as above is
\textbf{\emph{determined by $p$}} if 
$\mathcal{L} = \mathcal{L} _{p}$, after a fixed choice of parametrization of boundary of $\mathcal{D}$ by $\mathbb{R}$.
(In the case the end of $\mathcal{D} $ is negative, the
above is meant to be analogous.) 
\end{definition}
A Hamiltonian connection $\mathcal{A}$ on $[0,1] \times M$
uniquely corresponds to a choice of a smooth function $H:
[0,1] \times M \to \mathbb{R}$, normalized to have mean zero
at each moment. For the holonomy path of $\mathcal{A}$ over
$[0,1] $ is a path $\phi _{\mathcal{A}}: [0,1] \to Ham
(M,\omega)  $, generated by a Hamiltonian $H:
[0,1] \times M \to \mathbb{R}$, and this uniquely determines
the connection. Conversely, $H$ uniquely  determines
a Hamiltonian connection with holonomy path generated by
$H$. We can say that $H$ \textbf{\emph{generates
$\mathcal{A}$}}. 
\begin{lemma} \label{lemma:areadisk} Let $p$ and
$\mathcal{L} _{p}  \subset \partial \mathcal{D} \times
M  $ be as in definition above with $L ^{\pm}
(\widetilde{p})=\rho$, where $\widetilde{p} $ is some lift
of $p$ to $\operatorname {Ham} (M,\omega)$, that is $p (t)
= \widetilde{p} (t) (p (0)) $. Let $\mathcal{A} _{0}   $ be
a Hamiltonian connection on $[0,1] \times M$, generated by
a Hamiltonian $H: [0,1] \times M \to \mathbb{R} ^{} $ with
$L ^{\pm} $ length $\kappa$, constant for $t$ near $0,1$.
Then there is a Hamiltonian connection
$ \widetilde{{\mathcal{A}}} ^{p}_{0} $ on $\mathcal{D}
\times M$,  preserving $\mathcal{L} _{p} $, compatible with
respect to $\mathcal{A} _{0} $, and satisfying $$area
(\widetilde {\mathcal{A}} ^{p} _{0} ) \leq \kappa + \rho. $$
The construction is natural in the sense that
$(\widetilde{p},\mathcal{A} _{0}) \mapsto
\widetilde{{\mathcal{A}}} ^{p}_{0}  $ can be made into
a smooth map (of Frechet manifolds).
\end{lemma}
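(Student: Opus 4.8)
The plan is to build $\widetilde{\mathcal A}^{p}_{0}$ by hand in a collar of $\partial\mathcal D$, using the strip-end coordinates, and to arrange that the positive part of its curvature is concentrated in two disjoint bands whose contributions are exactly the two Hofer lengths $\rho$ and $\kappa$. I will model $\mathcal D$ near its boundary by the half-strip $[0,1]_{s}\times[0,\infty)_{\tau}$, with the strip end chart $[0,1]\times(0,\infty)$ at $\tau\to\infty$ and the boundary arc of Definition \ref{defInducedLoop} along $\{\tau=0\}$, so that $\mathcal L_{p}$ restricts to the constant Lagrangian $p(0)$ over $\{s=0\}$, to $p(1)$ over $\{s=1\}$, and to the moving family $p(s)$ over the arc $\{(s,0)\}$. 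A Hamiltonian connection on $\mathcal D\times M$ is recorded by a $C^{\infty}_{\mathrm{norm}}(M)$-valued $1$-form $a=a_{s}\,ds+a_{\tau}\,d\tau$, its horizontal lift of $\partial_{s}$ being $\partial_{s}+X_{a_{s}}$ and similarly for $\partial_{\tau}$. I will take $a_{\tau}\equiv 0$ throughout; then the lift of the boundary direction $\partial_{\tau}$ along the two vertical sides is $\partial_{\tau}$ itself, which is automatically tangent to the constant Lagrangians $p(0),p(1)$, so $\mathcal L_{p}$ is preserved along the sides for free.

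It then remains only to specify $a_{s}$. Let $G_{s}$ denote the normalized generator of the lift $\widetilde p$, so that the velocity of the family $p(s)=\widetilde p(s)(p(0))$ is $X_{G_{s}}$ along $p(s)$, and let $H_{s}$ be the normalized Hamiltonian generating $\mathcal A_{0}$. Choosing $0<\tau_{1}<\tau_{2}$ and smooth $\chi_{1},\chi_{2}:[0,\infty)\to[0,1]$, flat at their endpoints, with $\chi_{1}=0$ near $\tau=0$ and $\chi_{1}=1$ on $[\tau_{1},\infty)$, and $\chi_{2}=0$ on $[0,\tau_{1}]$ and $\chi_{2}=1$ on $[\tau_{2},\infty)$, I set $a_{s}=(1-\chi_{1}(\tau))\,G_{s}$ on $[0,\tau_{1}]$ and $a_{s}=\chi_{2}(\tau)\,H_{s}$ on $[\tau_{1},\infty)$. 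Then along the arc $a_{s}=G_{s}$, so the lift $\partial_{s}+X_{G_{s}}$ is tangent to $p(s)$ and $\mathcal L_{p}$ is preserved along the arc; for $\tau\geq\tau_{2}$ one has $a=H_{s}\,ds$ with $a_{\tau}=0$, which is precisely the translation-invariant extension $\overline{\mathcal A}_{0}$, giving compatibility with $\mathcal A_{0}$ at the end in the sense of Section \ref{section:exact}. The two pieces match to infinite order at $\tau_{1}$, since both equal $0$ there with vanishing $\tau$-derivatives.

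For the area, since $a_{\tau}\equiv 0$ the bracket term drops and the curvature is $R_{\widetilde{\mathcal A}^{p}_{0}}(\partial_{s},\partial_{\tau})=-\partial_{\tau}a_{s}$. In conformal coordinates $j\partial_{s}=\partial_{\tau}$, so by Definition \ref{def:area} one has $\area(\widetilde{\mathcal A}^{p}_{0})=\int_{0}^{1}\!\int_{0}^{\infty}\max_{M}\bigl(-\partial_{\tau}a_{s}\bigr)\,d\tau\,ds$. On the first band $-\partial_{\tau}a_{s}=\chi_{1}'(\tau)G_{s}$ with $\chi_{1}'\geq 0$, contributing $\int_{0}^{1}\max_{M}G_{s}\,ds=L^{+}(\widetilde p)=\rho$; on the second band the analogous computation (with the connection-sign convention fixing how $H$ generates $\mathcal A_{0}$) gives $\int_{0}^{1}\max_{M}(\pm H_{s})\,ds=\kappa$, while the third region is flat and contributes nothing. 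Hence $\area(\widetilde{\mathcal A}^{p}_{0})\leq\rho+\kappa$. Naturality is immediate: with the cutoffs $\chi_{1},\chi_{2}$ fixed once and for all, the assignment $(\widetilde p,\mathcal A_{0})\mapsto(G_{\bullet},H_{\bullet})\mapsto a$ is affine-linear in the generators, hence a smooth map of Fréchet manifolds.

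The main obstacle is the reconciliation of this square model with the genuine smooth-boundary surface $\mathcal D=D^{2}-z_{0}$ and the verification of $\mathcal L_{p}$-preservation at the two smoothed corners where the arc meets the vertical sides. This is exactly where the hypotheses that $p$ and $H$ be constant for $t$ near $0,1$ are used: near $s=0,1$ one has $G_{s}\equiv 0$ and $H_{s}$ independent of $s$, so $a_{s}$ is locally constant there and the connection extends smoothly across the (smoothed) corners of the model while still preserving the then-constant Lagrangian. The remaining care is bookkeeping of the Hofer sign conventions and of the positive- versus negative-end case of Definition \ref{defInducedLoop}, so that each band yields the correct one-sided length; these are routine once the conventions of Section \ref{section:hoferlength} and Definition \ref{def:area} are fixed.
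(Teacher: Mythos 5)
Your construction is correct, but it is not the route the paper takes. The paper's proof first forms the concatenated path $\widetilde{p}\cdot q$ (where $q$ is the holonomy path of $\mathcal{A}_0$ generated by $H$), writes down the explicit coupling form $\Omega'=\omega-d(\eta(rad)\cdot H'\,d\theta)$ on the \emph{closed} disk $D^2\times M$ with a radial cutoff $\eta$, computes $\area(\mathcal{A}')=L^+(\widetilde p\cdot q)=L^+(\widetilde p)+L^+(q)$ there, and then transports the result to $\mathcal{D}$ by pulling back along a retraction of $\mathcal{D}$ onto an embedded disk $i(D^2)$ chosen so that the strip end covers the half of $\partial D^2$ carrying $q$ and the remaining boundary of $\mathcal{D}$ covers the half carrying $\widetilde p$. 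You instead work directly on $\mathcal{D}$ in a global half-strip model and interpolate in the normal direction between the generator of $\widetilde p$ along the moving arc and the generator $H$ at the end, so that the curvature $-\partial_\tau a_s$ is supported in two bands whose $|\cdot|_+$-integrals are the two one-sided Hofer lengths; this makes the area estimate a transparent two-term computation with no concatenation and no retraction. What the paper's version buys is uniformity: the same closed-disk coupling form is reused essentially verbatim for the capping structures $\Theta(L,\mathcal{A}_0)$ and $\Theta(H)$ of Section \ref{section:gluing} and Lemma \ref{lemma:areadisk2}. Two points in your write-up deserve tightening, though neither is a gap in substance. First, your claim that $G_s\equiv 0$ near $s=0,1$ holds automatically for the path $p$ but not for an arbitrary lift $\widetilde p$; you should reparametrize $\widetilde p$ to be constant near $t=0,1$, which changes neither $\mathcal{L}_p$ up to the allowed boundary reparametrization nor $L^{\pm}(\widetilde p)$. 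Second, the conformal identification of the half-strip with $\mathcal{D}=D^2-z_0$ opens the two right angles (locally $z\mapsto z^2$), so you should state explicitly that your connection is flat in a neighborhood of each corner --- which your cutoffs together with the constancy hypotheses on $p$ and $H$ do guarantee --- so that it descends to a smooth connection on the actual surface; the paper absorbs the same issue into the choice of the embedding $i$ and the retraction $ret$.
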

\begin{proof} Let $q: [0,1] \to \operatorname {Ham}
(M,\omega)$ be the holonomy path of $\mathcal{A} _{0} $, $q
(0)=id$, generated by $H$. Let $\widetilde{p}  \cdot q$ be
the usual path concatenation in diagrammatic order, and $H'$ be its generating Hamiltonian.

   Define a coupling form ${\Omega}'$ on $D ^{2} \times M$:
\begin{equation*}
{\Omega}' =  \omega - d (\eta (rad) \cdot H' d \theta),
\end{equation*}
for $(rad, \theta)$ the modified angular coordinates on $D ^{2}  $, 
$\theta \in [0,1]$, $0 \leq rad \leq 1$, and 
$\eta: [0,1] \to [0,1]$ is a smooth function satisfying $$0 \leq \eta' (rad),$$
and 
\begin{equation} 
\eta (rad) = \begin{cases} 1 & \text{if } 1 -\delta \leq rad \leq 1 ,\\
rad ^{2}  & \text{if } rad \leq 1-2\delta,
\end{cases}
\end{equation}
for a small $\delta >0$.  
By an elementary calculation  $$\area  (\mathcal{A}') = L ^{+} (p \cdot q) = L ^{+} (p)  + L ^{+} (q) , $$ where $\mathcal{A}'$ is the connection induced by $\Omega'$.
 Set $$arc=\{(1,\theta) \in D ^{2} \,|\, 0 \leq \theta \leq
 1/2\}.$$ Let $arc ^{c} $ denote the complement of $arc$ in
 $\partial D ^{2} $.   Fix a smooth embedding $i: D ^{2}
 \hookrightarrow \mathcal{D}$ such that the following is
 satisfied (see Figure \ref{figure:arc}):
\begin{itemize}
	\item The image of the embedding contains
$\partial \mathcal{D} - end$, where $end$ is the image of the
   distinguished (say positive) strip end chart $$[0,1] \times (0,\infty) \to \mathcal{D}.$$
	 \item $i(arc) \subset end ^{c} $,
	 \item $i (arc ^{c}) \subset end$. 
\end{itemize}
 \begin{figure} [h]
 \includegraphics[width=1.5in]{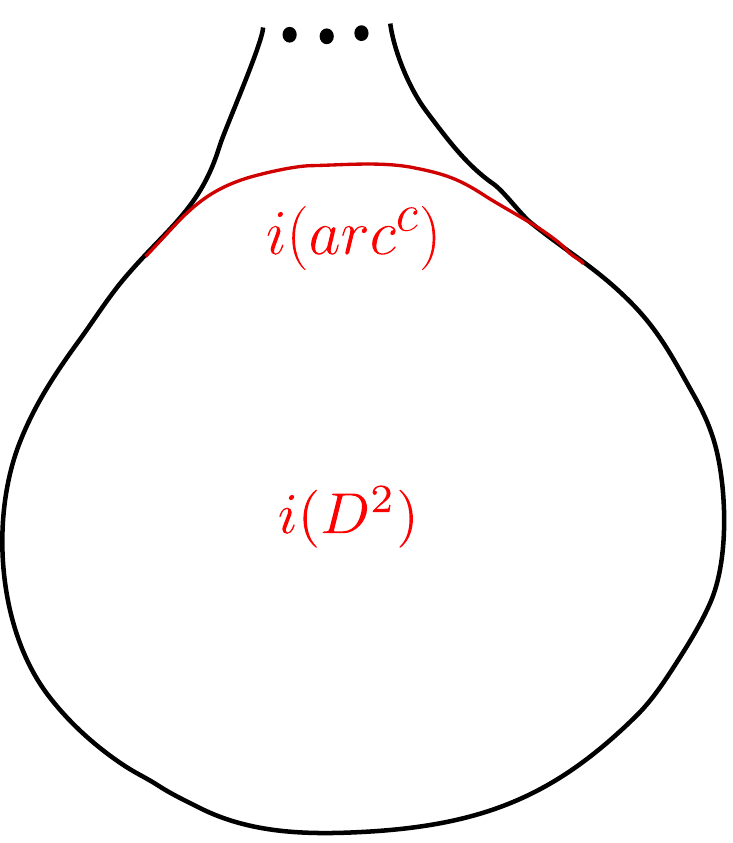}
 \caption {} \label{figure:arc}
\end{figure} 

Next fix a deformation retraction $ret$ of $\mathcal{D}$
onto  $i (D ^{2}) $, so that in the strip end chart above,
for $r \geq 1$ $ret$ is the composition $i \circ param \circ
pr$, where $$pr: [0,1] \times (0,\infty) \to [0,1]$$ the
projection and where $$param: [0,1] \to arc ^{c} \subset
D ^{2} $$ is a diffeomorphism.
   Finally, set ${\Omega} = ret ^{*} {\Omega}'$ on $ \mathcal{D} \times S ^{2} $, and set $\widetilde{\mathcal{A}} _{0} ^{p}  $ to be the induced Hamiltonian connection. As constructed $ \widetilde{{\mathcal{A}}} _{0} $ will be compatible with $\mathcal{A} _{0} $,  when the end of $\mathcal{D}$ is positive.  When the end is negative we take the reverse paths $p ^{-1}, q ^{-1}  $.
\end{proof}
Let us denote by $\Theta (p, \mathcal{A} _{0})
= (\mathcal{D} \times M, \mathcal{D},
\mathcal{L} _{p}, \widetilde{{\mathcal{A}}} ^{p}_{0})   $ the Hamiltonian structure as in the lemma above. When $p$ is the
constant map to $L$  we will instead write 
\begin{equation} \label{eq:ThetaL}
\Theta (L, \mathcal{A} _{0}). 
\end{equation}

The following says that under suitable conditions the
connection of the lemma above can be made to have $area$ 0.
\begin{lemma} \label{lemma:areadisk2} Let $H: M \times [0,1]
\to \mathbb{R} ^{} $ be a smooth time-dependent function
with zero mean at each moment. Let $p: [0,1] \to
\operatorname {Ham} (M,  \omega) $ be the path generated by
$H$. Let $L \in Lag (M, L _{0} )$,
and $p _{L}: [0,1] \to Lag (M, L _{0} )$ be the path $p _{L}
(t) = p (t) (L) $. Let $\mathcal{L} _{p} \in \partial
\mathcal{D} \times M$ be as in the Lemma
\ref{lemma:areadisk}. Suppose that $\mathcal{D} $ has the positive end $e _{0}$. And let $\mathcal{A} _{0}$ at the $e _{0}$ be generated by $H$, then there is a Hamiltonian connection
$ \widetilde{{\mathcal{A}}} ^{H} $ on $\mathcal{D}
\times M$,  preserving $\mathcal{L} _{p} $, compatible with
respect to $\mathcal{A} _{0} $, and satisfying $$area
(\widetilde {\mathcal{A}} ^{H} ) = 0.$$
The construction is natural in the sense that
$H \mapsto \widetilde{{\mathcal{A}}} ^{H}  $ can be made into a smooth map (of Frechet manifolds).
\end{lemma}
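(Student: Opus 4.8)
The plan is to first reduce the statement to the construction of a \emph{flat} Hamiltonian connection. By Definition \ref{def:area}, $\area(\widetilde{\mathcal{A}}^H) = \int_{\mathcal{D}} \alpha_{\widetilde{\mathcal{A}}^H}$, where $\alpha_{\widetilde{\mathcal{A}}^H}(v, jv) = |R_{\widetilde{\mathcal{A}}^H}(v,jv)|_+ = \max_{\text{fiber}} R_{\widetilde{\mathcal{A}}^H}(v, jv)$. Since $R_{\widetilde{\mathcal{A}}^H}(v, jv)$ is a mean-zero function on the fiber, its maximum is nonnegative and vanishes if and only if the function is identically zero. Hence $\alpha_{\widetilde{\mathcal{A}}^H}$ is a nonnegative $2$-form, and $\area(\widetilde{\mathcal{A}}^H) = 0$ holds if and only if $R_{\widetilde{\mathcal{A}}^H} \equiv 0$, i.e. the connection is flat. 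So it suffices to construct a flat Hamiltonian connection on $\mathcal{D} \times M$ that preserves $\mathcal{L}_p$ and is compatible with $\mathcal{A}_0$ at $e_0$.

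I would build this connection by gauging the trivial one. Writing $\phi^H_t$ for the flow of $H$, so that $p(t) = \phi^H_t$ and $p_L(t) = \phi^H_t(L)$, I first fix a smooth function $\psi: \mathcal{D} \to [0,1]$ with two properties: in the strip end chart $[0,1] \times (0,\infty)$ of $e_0$ it equals the first coordinate, $\psi(s,t) = s$ for $t$ large (in particular $\mathbb{R}$-translation invariant there); and on $\partial\mathcal{D}$ it restricts to $\psi(\zeta(t)) = \pi(t)$, for $\zeta, \pi$ as in Definition \ref{defInducedLoop}. These two prescriptions agree on their overlap --- both give $0$ on the ray $\{0\} \times (0,\infty)$ and $1$ on $\{1\} \times (0,\infty)$ --- so since $\mathcal{D}$ is contractible such a $\psi$ exists (concretely one may take the modified angular coordinate of Lemma \ref{lemma:areadisk}). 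I then set $g_z := \phi^H_{\psi(z)} \in \Ham(M,\omega)$ and let $\widetilde{\mathcal{A}}^H$ be the image of the product (trivial) connection under the gauge transformation $(z,m) \mapsto (z, g_z(m))$.

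It then remains to verify the three required properties, each of which should be routine. Flatness is automatic: gauge transformations are curvature-covariant, so the gauged connection inherits $R \equiv 0$ from the product connection, giving $\area(\widetilde{\mathcal{A}}^H) = 0$ by the first paragraph. For compatibility at $e_0$, in the strip end chart $g(s,t) = \phi^H_s$ is $t$-independent, and this is exactly the gauge presentation of the $\mathbb{R}$-translation invariant extension $\overline{\mathcal{A}}_0$ of the connection generated by $H$, so $\widetilde{\mathcal{A}}^H = \overline{\mathcal{A}}_0$ there. To see that $\mathcal{L}_p$ is preserved, note that parallel transport of a gauge-of-trivial connection from $z_0$ to $z_1$ is $g_{z_1} g_{z_0}^{-1}$, which carries the fiber $g_{z_0}(L)$ to $g_{z_1}(L)$; along $\partial\mathcal{D}$ these fibers are $\phi^H_{\pi(t)}(L) = p_L(\pi(t))$, i.e. the fibers of $\mathcal{L}_p$, so the horizontal distribution is tangent to $\mathcal{L}_p$. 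Naturality follows because $H \mapsto \phi^H_t$ is a smooth map of Frechet manifolds and $\psi$ is fixed.

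The genuinely essential point --- and the only place where the hypothesis is used --- is the \emph{simultaneous} compatibility of the boundary condition (the Lagrangian isotopy $p_L$) and the end condition (the connection $\mathcal{A}_0$). Both are encoded by the single gauge $g_z = \phi^H_{\psi(z)}$ precisely because one and the same $H$ generates $p$ and $\mathcal{A}_0$; this is exactly what forces the positive error term $\kappa + \rho$ to disappear in passing from Lemma \ref{lemma:areadisk} to the present statement. I would therefore treat this compatibility as the crux, and regard the curvature and parallel-transport computations as standard once the gauge $g$ is in place.
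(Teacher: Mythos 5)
Your proof is correct and is essentially the paper's own argument in different clothing: the paper obtains $\widetilde{\mathcal{A}}^{H}$ by pulling back $\mathcal{A}_0$ along a retraction of $\mathcal{D}$ onto a boundary arc (a one--dimensional set, whence flatness and $\area=0$), and that pullback is exactly your gauge transform of the trivial connection by $z \mapsto \phi^{H}_{\psi(z)}$, with $\psi$ playing the role of the composite retraction. Your reduction of $\area(\widetilde{\mathcal{A}}^{H})=0$ to flatness and your explicit checks of the end and boundary compatibilities are precisely the details the paper's two-line sketch leaves implicit.
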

\begin{proof} We only sketch the proof as the idea is similar to the proof of Lemma \ref{lemma:areadisk}.
In the notation of the proof of Lemma \ref{lemma:areadisk}
let $r: i (D ^{2}) \to i (arc) $ be a smooth retraction.
Set $\mathcal{A}  := r ^{*} \mathcal{A} _{0}  $, and set
$\widetilde{\mathcal{A}} ^{H} = ret ^{*} \mathcal{A} $ (for $ret$ as
before).
\end{proof}
For future use, we denote by 
\begin{equation} \label{eq:ThetaH}
\Theta (H) = (\mathcal{D} \times M, \mathcal{D},
\mathcal{L} _{p}, \widetilde{{\mathcal{A}}} ^{H}),
\end{equation}
the Hamiltonian structure as in the lemma above. 

\subsubsection{Gluing Hamiltonian structures with estimates} \label{sec_Gluing Hamiltonian structures with estimates}
Now let $\Theta = (\widetilde{S}, S, \mathcal{L},
\mathcal{A})$ be a Hamiltonian structure.  For simplicity,
suppose that $\mathcal{L} $ is trivial with fiber $L _{0}$,
and that $\mathcal{A} $ is trivial over the boundary.
Suppose further that at the end $e _{i} $ the corresponding connection $\mathcal{A} _{i} $ is generated
by $L ^{\pm} $-length $\kappa _{i} $ Hamiltonian $H
_{i}$. By capping each $e _{i}$ end with $\Theta (L _{0},
\mathcal{A} _{i})  $ (keeping in mind that negative-positive
distinction)  we obtain a Hamiltonian structure we
call $\Theta ^{/} = (\widetilde{S} ^{/}, S, \mathcal{L}
^{/}, \mathcal{A} ^{/})  $.
By the lemma above:
\begin{equation} \label{eq:areakappai}
\area (\mathcal{A} ^{/}) \leq \area (\mathcal{A})
+ \sum _{i} \kappa   _{i}.
\end{equation}
 
Recall that a Lagrangian $L \subset M$ is called  \emph{monotone}  
with monotonicity constant $const>0$, if for any 
relative class $B \in H _{2} (M, L )$: $\omega (B) = const
\cdot \mu (B)$, where $\mu$ is the Maslov number. In what
follows $const$ is this monotonicity constant.
\begin{lemma} \label{lemma:gluinglowerbound} 
Let
$$\Theta:=\{\Theta _{r} \}:=\{\widetilde{S} _{r}, S _{r},
\mathcal{L} _{r} , \mathcal{A} _{r} \} _{\mathcal{K}} $$ be
a hyper taut Hamiltonian structure satisfying:
\begin{itemize}
	\item $\mathcal{L} _{r} $ is the trivial bundle with fiber
	$L _{0} $ for each $r$.
	\item $\mathcal{A} _{r} $ is the trivial connection over the
	boundary of ${S} _{r} $ for each $r$.
	\item The Floer chain complex  $CF(\mathcal{A} _{i} )$ is perfect for each $i$ and $\mathcal{A} _{i} $ is generated by a time dependent Hamiltonian $H _{i} $ with $L ^{\pm} $ length $\kappa _{i} $.
\end{itemize}
Let $\Theta ^{/} _{r}   = (\widetilde{S} ^{/} _{r}, S ^{/} _{r} , \mathcal{L} ^{/} _{r}, \mathcal{A} ^{/} _{r} ) $ be obtained from $\Theta _{r} $ by capping off each end 
$e _{i} $, so that \eqref{eq:areakappai} is satisfied.
For a given $A \in H _{2} ^{sec} (\widetilde{S}, \mathcal{L})  $, if 
\begin{equation*}
\forall r: \area (\mathcal{A} _{r} ) < \carea (\Theta _{r} ^{/}, A ^{/} )  - \sum _{i} \kappa _{i},
\end{equation*} 
where $A ^{/} $ is the capping off of $A$ is described in the proof, then 
$\overline{\mathcal{M}} (\{\Theta _{r}\} , A)$ is empty. 
Moreover, $$\forall r: \carea (\Theta _{r} ^{/}, A ^{/}) = -const \cdot Maslov ^{vert}   (A ^{/}),$$
where $Maslov ^{vert} $ is as in Appendix \ref{appendix:maslov}.
\end{lemma}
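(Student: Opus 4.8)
The plan is to derive the emptiness assertion from the $A^{/}$-smallness of the capped-off structure, and to prove the \emph{moreover} clause as a clean relative-cohomology identity forced by monotonicity. First I would fix the meaning of $A^{/}$. We cap each end $e_i$ with the disk structure $\Theta(L_0,\mathcal{A}_i)$ produced by Lemma \ref{lemma:areadisk}; since $\mathcal{L}_r$ is the constant bundle with fibre $L_0$ the relevant path $p$ is constant, so $\rho=0$ and each cap has $\area\le\kappa_i$, which is exactly the content of \eqref{eq:areakappai}. Let $A^{/}$ be the relative class of $\widetilde{S}^{/}_r$ obtained by gluing a representative of $A$ to the canonical cap section asymptotic, at $e_i$, to the asymptotic constraint $\gamma_i$ of $A$. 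Each cap connection preserves the fibrewise Lagrangian $\mathcal{L}_p$, so its coupling form vanishes on $\mathcal{L}_p$ and the caps are (hyper) taut; by Lemma \ref{lemma:immediategluing} the capped family $\{\Theta^{/}_r\}$ is again taut, whence by Lemma \ref{lemmaInvariantPairing} the number $\carea(\Theta^{/}_r,A^{/})$ is independent of $r$ and of the chosen representative.

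Now I rearrange the hypothesis: combining $\area(\mathcal{A}_r)<\carea(\Theta^{/}_r,A^{/})-\sum_i\kappa_i$ with \eqref{eq:areakappai} yields $\area(\mathcal{A}^{/}_r)<\carea(\Theta^{/}_r,A^{/})$, i.e. $\Theta^{/}_r$ is $A^{/}$-small in the sense of Definition \ref{def:small}. Lemma \ref{lemma:smallempty} then gives $\overline{\mathcal{M}}(\Theta^{/}_r,A^{/})=\emptyset$ for every $r$. It remains to transfer this emptiness back: a finite Floer energy $J(\mathcal{A}_r)$-holomorphic section $\sigma$ of $\widetilde{S}_r$ in class $A$ can be glued to the holomorphic cap sections to yield a holomorphic section of $\widetilde{S}^{/}_r$ in class $A^{/}$, so nonemptiness of $\overline{\mathcal{M}}(\{\Theta_r\},A)$ would contradict the emptiness just established. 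I expect this gluing to be the principal obstacle: it requires that the caps of Lemmas \ref{lemma:areadisk}--\ref{lemma:areadisk2} carry holomorphic sections asymptotic to each $\gamma_i$, together with a standard pregluing and implicit function theorem argument, the only genuine bookkeeping being that the glued asymptotics and relative class are precisely those defining $A^{/}$.

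For the \emph{moreover} clause I would view both sides as the evaluation on $A^{/}$ of relative cohomology classes on $(\widetilde{S}^{/}_r,\mathcal{L}^{/}_r)$. Since $\mathcal{A}^{/}_r$ preserves $\mathcal{L}^{/}_r$, the coupling form $\Omega_{\mathcal{A}^{/}_r}$ vanishes on $\mathcal{L}^{/}_r$ and so defines a class $[\Omega_{\mathcal{A}^{/}_r}]\in H^2(\widetilde{S}^{/}_r,\mathcal{L}^{/}_r;\mathbb{R})$ with $\carea(\Theta^{/}_r,A^{/})=-\langle[\Omega_{\mathcal{A}^{/}_r}],A^{/}\rangle$; similarly $Maslov^{vert}(A^{/})=\langle m^{vert},A^{/}\rangle$ for the vertical Maslov class $m^{vert}$ of Appendix \ref{appendix:maslov}. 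The claim reduces to the relative-cohomology identity $[\Omega_{\mathcal{A}^{/}_r}]=const\cdot m^{vert}$.

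To prove this identity I would compare fibre and base components. Restricted to a fibre, $[\Omega_{\mathcal{A}^{/}_r}]$ is $[\omega]$ and $m^{vert}$ is the Maslov class $\mu$ of $L_0$, so monotonicity $\omega(B)=const\cdot\mu(B)$ on $H_2(M,L_0)$ makes the two classes agree on all fibre classes. The only remaining freedom is a class pulled back from the base surface $(S^{/}_r,\partial S^{/}_r)$; here the coupling normalization $\int_M\Omega^{n+1}=0$ is exactly what forces $[\Omega_{\mathcal{A}^{/}_r}]$ to have vanishing base component, while $m^{vert}$ is vertical by construction, so the two classes coincide and the identity follows, with $r$-independence again a consequence of tautness. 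The delicate point of this part is precisely the verification that both classes are vertical in this sense; once the coupling normalization is used no additive constant survives, and fibrewise monotonicity then suffices.
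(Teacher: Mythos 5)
Your first part follows the same basic strategy as the paper (cap off, compare $\carea$ with $\area$ via Lemma \ref{lemma:smallempty} and the estimate \eqref{eq:areakappai}), but you have left open exactly the step the paper works hardest to justify: the existence of holomorphic cap sections asymptotic to the \emph{given} flat sections $\gamma_i$ arising as the asymptotics of a hypothetical $\sigma\in\overline{\mathcal{M}}(\{\Theta_r\},A)$. You flag this as ``the principal obstacle'' but supply no argument, and tellingly you never use the hypothesis that $CF(\mathcal{A}_i)$ is perfect. In the paper this is precisely where that hypothesis enters: the PSS morphism $QH(L)\to FH(L,L)$ of Albers is an isomorphism in the monotone setting, and perfection of $CF(\mathcal{A}_i)$ guarantees that each generator $\gamma_i$ pairs nontrivially with $PSS(a)$ for some $a$, so that some element $\sigma_{A^i}$ of the cap moduli space $\mathcal{M}(\Theta_-,a,A^i)$ has $\gamma_i$ as its asymptote (with $A^i$ pinned down by Fredholm index and monotonicity, which is also what makes $A^{/}$ well defined). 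Without this, the gluing you propose may have nothing to glue to. A smaller point: the paper sidesteps the implicit function theorem by pre-gluing to a holomorphic building when regularity fails; the area estimate applies to buildings just as well.

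For the \emph{moreover} clause you take a genuinely different route — a relative-cohomology identity $[\Omega_{\mathcal{A}^{/}_r}]=const\cdot m^{vert}$ — whereas the paper observes that each $\Theta^{/}_r$ is taut concordant to the trivial structure $(D^2\times M, D^2,\mathcal{L},\mathcal{A}^{tr})$, computes $\carea(\Theta_0,\cdot)=-const\cdot Maslov^{vert}(\cdot)$ there, and transports the identity back by Lemma \ref{lemmaInvariantPairing}. Your reduction to ``agreement on fibre classes plus vanishing base components'' is the right decomposition in principle (monotonicity handles differences of section classes), but the anchoring step is where your argument breaks: the normalization $\int_M\Omega^{n+1}=0$ does \emph{not} force $\langle[\Omega_{\mathcal{A}^{/}_r}],A^{/}\rangle$ to have no base contribution — indeed the paper's own computation has $\carea(\Theta_{\max},A_0)=L^+(f(\max))\neq 0$ for a structure of exactly this type — and likewise $m^{vert}$ evaluated on a section class genuinely depends on the boundary data, not just on fibre topology (the flat sections of $\Theta(p,\mathcal{A}_0)$ for nonconstant $p$ have nonzero vertical Maslov number). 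What makes both ``base components'' vanish here is the specific geometry (trivial $\mathcal{L}_r$, trivial $\mathcal{A}_r$ over $\partial S_r$, caps built from constant paths), and the clean way to exploit it is the taut concordance to the trivial structure. You should either import that argument or supply an explicit deformation replacing it; as written, ``vertical by construction'' is an assertion, not a proof.
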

\begin{proof} 
Suppose otherwise that we have an element $(\sigma _{0},r _{0} ) \in \overline{\mathcal{M}} (\{\Theta _{r}\} , A)$. 
Suppose for the moment that $\overline{\mathcal{M}} (\{\Theta _{r}\} , A)$ is regular. 
There is a morphism (cf. Albers~\cite{cite_AlbersPSS})
$$PSS: QH (L) \to FH (L, L),$$ where the right-hand side is defined using our construction in terms of flat sections, 
and the left-hand side is interpreted for example as the
$\mathbb{Z} _{2}$ graded homology of the Pearl complex,
Biran-Cornea~\cite{cite_BiranCorneaLagQuantHomology}. Moreover, as shown by Albers this is an isomorphism in the present monotone context.

We won't give the full construction of this morphism in our
setting, as it just a reformulation of the construction
in \cite{cite_AlbersPSS}. Here is a quick sketch. 
Let $$\Theta _{-}  = (\mathcal{D} \times M, \mathcal{D}, \mathcal{L}, \mathcal{A} _{-} ), $$ be the Hamiltonian structure with $e _{0} $ being a negative end, $\mathcal{L}$ trivial with fiber $L$ (which is an object as before), and $$\mathcal{A} _{-}:= \widetilde{\mathcal{A}} ^{p=const}  _{0},  $$ 
with right-hand side as in Lemma \ref{lemma:areadisk}, for
$p$ being the constant path at $L$. Suppose that $\Theta _{-}$  is regular. 
Define $PSS ([L]) $ as the homology class of the Floer chain:
\begin{equation} \label{eq:CL}
 \sum _{A} ev (\Theta _{-}, A).
\end{equation}
where the sum is over all classes $A \in H _{2} ^{sec}
(\mathcal{D} \times M, \mathcal{L}). $ 

Now, for a general class $a \in QH (L) $, $PSS (a) $ is
defined similarly, but using the moduli space $\mathcal{M}
(\Theta _{-}, a, A) $. The latter can be defined as the subset  of $\mathcal{M} (\Theta _{-}, A) $ consisting of sections
intersecting a fixed smooth pseudocycle, see Zinger
~\cite{cite_ZingerPseudocycles}, representative of $a$.
More specifically, for $z _{0} \in \partial \mathcal{D}
$ let $\widetilde{S} _{z _{0}}$ be the fiber. Fix
a pseudo-cycle $g: B \to L \subset \widetilde{S}  _{z
_{0}}$ representing $a \in H _{2} (L) $. Then $\mathcal{M}
(\Theta _{-}, a, A) $ consists of elements of $\mathcal{M}
(\Theta _{-}, A) $ intersecting image of $g$. (Although we
use the language of pseudocycles in this outline, for
analysis it is technically simpler to use Morse homology and
Perl complex language as in ~\cite{cite_AlbersPSS}.) 

Now, the PSS morphism is an isomorphism in our monotone context,
and $CF(\mathcal{A} _{i} )$ is perfect for each $i$, by
assumption.  It follows
that the asymptotic constraint $\gamma _{i} $ of $\sigma
_{0} $ at each (positive) end $e _{i} $ satisfies:
\begin{equation*}
\langle \gamma _{i}, PSS (a)  \rangle =1,
\end{equation*}
for some $a$ uniquely determined. 
Moreover, Fredholm index and monotonicity restrictions
ensure that only a single class $A^i$ can contribute in
the sum. 
Let then $\sigma _{A ^{i}} \in \mathcal{M} (\Theta _{-}, a, A _{i}) $ be some element.

With this understanding, at each end \footnote {More
precisely at each positive end, but in the case of negative
end we do the analogous construction.} $e _{i} $, glue $\sigma
_{0}$ with $\sigma _{A ^{i}}$. We then obtain a $J (\mathcal{A} ^{/} _{r _{0} } )$-holomorphic, class $A ^{/} $ section $\sigma ^{/} _{0}$ of $\Theta ^{/} _{r _{0}}  $. 

By Lemma \ref{lemma:smallempty}:
\begin{equation*}
   \carea (\Theta _{r _{0} } ^{/}, A ^{/} ) \leq \area (\mathcal{A} _{r_0} ^{/}) \leq \area(\mathcal{A} _{r _{0} })  + \sum _{i} \kappa  _{i},  
\end{equation*}
so 
\begin{equation*}
   \carea (\Theta _{r _{0} } ^{/}, A ^{/} ) - \sum _{i} \kappa  _{i}  \leq \area (\mathcal{A} _{r _{0} }),
\end{equation*}
so that we contradict the hypothesis. So in the case $\overline{\mathcal{M}} (\{\Theta _{r}\} , A)$ is regular we are done with the first part of the lemma. When it is not regular instead of gluing just pre-glue to get a holomorphic building $\sigma ^{/} _{0}$, and the conclusion follows by the same argument.

To prove the last part of the lemma, note that each $\Theta
^{/} _{r} $ is taut concordant to $$\Theta _{0}= (D ^{2}
\times M, D ^{2}, \mathcal{L}, \mathcal{A} ^{tr} ),  $$ with
$\mathcal{L}$ trivial with fiber $L _{0} $,  and for
$\mathcal{A} ^{tr} $ the trivial connection. And $$\carea
(\Theta _{0}, \cdot) = {-}{const}\cdot Maslov ^{vert}
(\cdot),$$  as functionals on $H _{2} ^{sec}  (D ^{2} \times M, \mathcal{L}) $. It follows by Lemma \ref{lemmaInvariantPairing} that
$$\carea (\Theta ^{/}, \sigma ^{/}_{0} )= \carea (\Theta _{0}, \sigma ^{/} _{0} ) = -{const}\cdot Maslov ^{vert}  (\sigma ^{/} _{0}  ) =  -{const}\cdot Maslov (A ^{/}). $$

\end{proof}

\section {Quantum Maslov classes} 
\label{section:QuantumMaslov}
Quantum Maslov classes 
are relative analogues of quantum characteristic
classes in
\cite{cite_SavelyevQuantumcharacteristicclassesandtheHofermetric}.
The latter give Chern classes in a certain semi-classical limit 
~\cite{cite_SavelyevBottperiodicityandstablequantumclasses}.
So quantum Chern classes is perhaps the most suggestive name
for the construction in
\cite{cite_SavelyevQuantumcharacteristicclassesandtheHofermetric}.
The name ``quantum Maslov classes'' is then also meant to be suggestive,  as the classical Maslov numbers are
relative analogues of Chern numbers.  
   
One version of the relative Seidel morphism appears in Seidel's
\cite{cite_SeidelFukayacategoriesandPicard-Lefschetztheory}
in the exact case. This was 
further developed by 
Hu-Lalonde~\cite{cite_HuLalondeArelativeSeidelmorphismandtheAlbersmap}
in the monotone case.  

Let $Lag (M)$ be as in the Introduction. We may also denote
the component of $L$ by $Lag (M, L)$.  Then from our
perspective the Hu-Lalonde morphism is  a functor 
$$S: \Pi Lag (M)  \to DF (M),$$
where $\Pi Lag (M)$ is the category with objects
elements of $Lag (M)$, with $$hom _{\Pi Lag (M)} (L _{0},
L _{1}) = \pi _{0} (\mathcal{P} (L _{0}, L _{1})), $$  where
$\mathcal{P} (L _{0}, L _{1})$ is as in the Introduction.
Here, $DF (M)$ is the Donaldson-Fukaya category of $M$, see
also \cite{cite_CorneaBiranLagrangianCobordism},
\cite{cite_BiranCorneaLagCobGAFA}  which can be understood as an extension.
%

In our setup this works as follows. To an element of
$\mathcal{P} (L _{0}, L _{1})$ we have have an associated Lagrangian subbundle $\mathcal{L} _{p} $ of $\mathcal{D} \times M$ over the boundary, as in Definition \ref{defInducedLoop}.
Extend this to a Hamiltonian structure $$\Theta _{p}
= (\mathcal{D} \times M, \mathcal{D}, \mathcal{L} _{p},
\widetilde{\mathcal{A}} _{0} ^{p}  )$$ where $\widetilde{\mathcal{A}} _{0} ^{p}$ is as in Lemma \ref{lemma:areadisk}.

Assuming $\Theta _{p} $ is regular, we define $$S ([p]) \in
hom _{DF} (L _{0}, L _{1}) = FH (L _{0}, L _{1})$$ by
\begin{equation*}
   S ([p]) = [ev (\Theta _{p})],
\end{equation*}
where $ev (\Theta)$ is as in Definition
\ref{def_EvaluationTotal}. Since $\Theta _{p}$ is well
defined up to concordance, $S ([p])$ is well defined by Lemma
\ref{lem_evaluation}.
\subsection {Definition of the quantum Maslov
classes} \label{section:SeidelMorph}
Set
\begin{equation}\label{eq_gradedH}
  \pi _{} (\mathcal{P} (L _{0}, L _{1}), p _{0}) = \sum _{d
\text{ even}} \pi _{d}(\mathcal{P}
(L _{0}, L _{1}), p _{0}) \oplus \sum _{d \text{ odd}}
\pi_{d}(\mathcal{P}
(L _{0}, L _{1}  ), p _{0}),
\end{equation}
with the natural $\mathbb{Z} _{2}$ grading, where $p _{0}$
is any base point, which may be omitted.

We construct a graded group homomorphism:
\begin{equation} \label{eqPsiPaths}
\Psi: \pi (\mathcal{P} (L _{0}, L _{1} )) \to FH
(L _{0}, L _{1}),
\end{equation}
as follows.

Let $$f: S ^{n} \to  \mathcal{P} (L _{0},
L _{1}),
$$ be smooth with a chosen orientation on $S ^{n}$. (When
$n=0$, we set $S ^{0}$ to be a single point.) We may
suppose by homotopy approximation that each $f (s)$ is constant
in $[0, \epsilon] \cup [1-\epsilon, 1]$ for some
$0<\epsilon<1$.

We associate to this the data:
\begin{equation*}
   \{\mathcal{D} \times M, \mathcal{D}, \mathcal{L} _{s}\}
	 _{s \in S ^{n}} , 
\end{equation*}
$\mathcal{L}_{s}:=\mathcal{L} _{f (s)}  $ a Lagrangian subbundle of $M \times \mathcal{D
} $ over
$\partial \mathcal{D} $ determined by $f (s)$ as before. The end of $\mathcal{D}$ here is negative.

Now let $\mathcal{A} _{0} $ be a Hamiltonian connection on
$[0,1] \times M$. And let $\mathcal{A} _{0} (L _{0} ) \subset \{1\} \times M $ denote the $\mathcal{A} _{0} $-transport over $[0,1]$ of $L _{0} \subset \{0\} \times M $.
Suppose that $\mathcal{A} _{0} (L _{0} ) $ is transverse to
$L _{1} $. 

For each $s$ the space $\mathcal{C} _{s}$ of Hamiltonian connections
$\mathcal{L} _{s} $-exact with respect to $\mathcal{A} _{0}
$, (as in Section \ref{section:exact}) is affine and
non-empty and hence contractible. So we have
a Serre fibration over $\mathcal{C} \to S ^{n}$, with
contractible fibers.
Taking a section $\{\mathcal{A} _{s}\}$ of $\mathcal{C} $, we get an induced Hamiltonian structure:
$$ \{\Theta _{f,s}\}= \{\mathcal{D} \times M, \mathcal{D}, \mathcal{L} _{s}, \mathcal{A} _{s} \}$$
well-defined up to concordance, and we suppose that
$\mathcal{A} _{s}$ is chosen so that this is regular.

We then define $\Psi ([f])$ by:
\begin{equation*}
   \Psi ([f]) = [ev (\{\Theta _{f,s}\})],
\end{equation*}
with the right hand side as in Definition
\ref{def_EvaluationTotal}.
The expression for $\Psi$ is then well defined by Lemma \ref{lem_evaluation} since
the corresponding $\mathcal{K} = S ^{n} $ is a manifold with no
boundary.

\begin{lemma} \label{lem_PsiPigroup}
$\Psi$ determines a group homomorphism:
\begin{equation} \label{eqPsiPaths}
\Psi: \pi  (\mathcal{P} (L _{0}, L _{1})) \to FH (L _{0}, L _{1}).
\end{equation}
\end{lemma}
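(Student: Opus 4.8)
The plan is to check two things: that $\Psi$ descends to a well-defined map on $\pi_d(\mathcal{P}(L_0,L_1))$, and that it is additive for the group law. Independence of the auxiliary section $\{\mathcal{A}_s\}$ of the fibration $\mathcal{C}\to S^n$ of $\mathcal{L}_s$-exact connections has already been reduced to Lemma~\ref{lem_evaluation}: any two such sections are homotopic through sections, the fibers of $\mathcal{C}$ being affine and hence contractible, and a homotopy of sections is exactly a concordance of the associated family Hamiltonian structures over $\mathcal{K}=S^n\times[0,1]$. I would treat homotopy invariance in $f$ identically: a smooth homotopy $F\colon S^n\times[0,1]\to\mathcal{P}(L_0,L_1)$ from $f_0$ to $f_1$ induces a family Hamiltonian structure over $\mathcal{K}\times[0,1]$ realizing a concordance between $\{\Theta_{f_0,s}\}$ and $\{\Theta_{f_1,s}\}$ in the sense of Definition~\ref{definition:concordance}, so the second part of Lemma~\ref{lem_evaluation} gives $[ev(\{\Theta_{f_0,s}\})]=[ev(\{\Theta_{f_1,s}\})]$ in $FH(L_0,L_1)$. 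Hence $\Psi$ is defined on $\pi_d$.

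For additivity I would use the pinch presentation of the group law. Writing $S^n=D_+\cup_{S^{n-1}}D_-$ as two hemispheres glued along the equator, the sum $[f]+[g]$ is represented by $h=(f\vee g)\circ c$, where $c\colon S^n\to S^n\vee S^n$ collapses the equatorial $S^{n-1}$ to the wedge point and $f,g$ are viewed as maps $(D^n,\partial D^n)\to(\mathcal{P}(L_0,L_1),p_0)$. I would then choose the data defining $\{\Theta_{h,s}\}_{s\in S^n}$ so that it restricts over $D_+$ to a reparametrization of $\{\Theta_{f,s}\}$, over $D_-$ to $\{\Theta_{g,s}\}$, and over a neighborhood of the equator to the constant family $\Theta_{p_0}$. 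This is possible because the relevant spaces of $\mathcal{L}_s$-exact connections are affine, so compatible regular sections extend across the seam.

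The key point is then a dimension count. With $\mathcal{K}=S^n$ the expected dimension of $\mathcal{M}(\{\Theta_{h,s}\},A)$ is $n$ plus the virtual dimension of an unparametrized fiber $\mathcal{M}(\Theta_{h,s_0},A)$, so a rigid ($0$-dimensional) count forces the fiber to have virtual dimension $-n$. Over the equator $S^{n-1}$, which is sent entirely to $p_0$, the parametrized moduli space is modeled on $S^{n-1}\times\mathcal{M}(\Theta_{p_0},A)$ of dimension $(n-1)+(-n)=-1$, hence empty after a regular choice; the same estimate shows that collapsing $\partial D^n$ to $p_0$ does not affect rigid counts. Therefore every rigid element of $\mathcal{M}(\{\Theta_{h,s}\},A)$ lies over the interior of $D_+$ or of $D_-$, and identifying these with the families defining $\Psi([f])$ and $\Psi([g])$ gives
\[
\#\mathcal{M}(\{\Theta_{h,s}\},A)=\#\mathcal{M}(\{\Theta_{f,s}\},A)+\#\mathcal{M}(\{\Theta_{g,s}\},A).
\]
Summing over the finitely many relevant $A$ (finite by monotonicity) yields $ev(\{\Theta_{h,s}\})=ev(\{\Theta_{f,s}\})+ev(\{\Theta_{g,s}\})$ already at the chain level, whence $\Psi([f]+[g])=\Psi([f])+\Psi([g])$. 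Taking the constant family over all of $S^n$ in the same estimate shows that $\Psi$ sends the identity element to $0$, consistent with additivity.

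The main obstacle I expect is not the formal algebra but the analytic input legitimizing the decomposition: arranging that the family Hamiltonian structures for $h$, $f$, and $g$ can be chosen simultaneously regular and compatible along the seam, and that the parametrized moduli spaces are compact so that the signed counts are finite and the splitting is clean. Compactness follows from the monotonicity hypotheses via the energy identity $e(\sigma)=\int_S\sigma^*\Omega_{\mathcal{A}}$ together with the curvature bounds of Section~\ref{section:areafib}, and the transversality is standard parametrized Floer theory; the one genuine caveat is $d=1$, where $S^1$ is not simply connected, so the family framework must be applied to the hemisphere decomposition (with $\mathcal{K}=D^1$) rather than to $S^1$ directly.
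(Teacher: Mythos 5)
Your argument is correct, and its skeleton is the same as the paper's: well-definedness on homotopy classes is reduced to the concordance invariance of Lemma \ref{lem_evaluation}, and additivity is proved by representing $[f]+[g]$ by a family spliced together from the two constituent families along a region of $S^{d}$ over which no holomorphic sections lie, so that the rigid counts split class by class. The two proofs differ only in how that curve-free region is produced. The paper performs a connect sum $S^{d}\# S^{d}$ at points $s_{0}$, $s_{1}$ which, by finiteness of the compactified zero-dimensional moduli space, admit neighborhoods free of sections, and then observes that the glued family is concordant to the concatenation family; this is more elementary in that it requires no statement about the basepoint structure $\Theta_{p_{0}}$. You instead pinch along the equator and make the family constant equal to $\Theta_{p_{0}}$ on a collar, excluding sections there by the index count $(n-1)+(-n)<0$. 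Your route needs the extra (standard) input that $\Theta_{p_{0}}$ is regular for every class of fiberwise virtual dimension $-n$: since the family is constant along the equator, the equatorial directions contribute nothing to the linearization, so ``regularity after a regular choice'' there means precisely regularity of the single structure $\Theta_{p_{0}}$, which forces $\mathcal{M}(\Theta_{p_{0}},A)=\emptyset$; and in the monotone setting the rigid compactified moduli space contains no broken configurations, so emptiness of the open stratum suffices. In exchange you get additivity at the chain level without a separate concordance step, the identity $\Psi(0)=0$ by the same estimate, and you correctly flag the $d=1$ issue (non-simply-connected $\mathcal{K}=S^{1}$ versus the hypothesis of Definition \ref{def:HamiltonianStructure}) that the paper's proof passes over silently.
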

\begin{proof} [Proof] Let $f _{0}, f _{1}$  represent
classes $a, b \in \pi _{d} (\mathcal{P} (L _{0}, L _{1}  ),
p _{0})$. 
Since $\{\Theta _{f _{0},s}\}$ is regular, the set:
\begin{equation*}
\textbf{S}  = \cup _{A} \overline{\mathcal{M}} (\Theta _{f,s}, A)
\end{equation*}
is finite,
where the sum is over all $A$ that contribute
to $\Psi ([f])$ (recall that there are only finitely many of
them). 

Let $F: \textbf{S}  \to S ^{d}$ be the natural map sending an
element $(\sigma, s)$ to $s$.
By finiteness of $\textbf{S} $ we may find an 
$s _{0} \in  S ^{d}$ such that there is a open $U \ni s _{0}
\subset S ^{d}$, satisfying: $$F (\textbf{S} ) \cap U = \emptyset.$$

Let $s _{1} \in S ^{d}$ be the analogous element for the
family $\{\Theta _{f _{1},s}\}$. Let $$\{\Theta _{f _{0}
\# f _{1},b }\} _{b \in S ^{d} \# S ^{d}}$$ denote the gluing of the two Hamiltonian structures at $s _{0}, s _{1}$. Here we are just
doing a simple connect sum surgery, corresponding to the
connect sum surgery of the spheres $S ^{d} \# S ^{d} \simeq
S ^{d}$ at the points $s _{0}, s _{1}$.

By the defining properties of $s _{0}, s _{1}$, we may perform the surgery so that: 
\begin{equation*}
 ev (\{\Theta _{f _{0} \# f _{1}, b }\}) = ev (\{\Theta _{f
 _{0},s} \} + ev(\{\Theta _{f _{1}, s}\}.
\end{equation*}

On the other hand, the structure $\{\Theta _{f _{0} \#
f _{1}, b}\}$ is clearly concordant to the structure $\{\Theta
_{f _{0} + f _{1},s} \}$, where $f _{0} + f _{1} $ is the
concatenation sum in $\pi _{d} (\mathcal{P} (L _{0}, L _{1}  ),
p _{0})$. So 
\begin{equation*}
 ev (\{\Theta
_{f _{0} + f _{1},s} \}) = ev (\{\Theta _{f
 _{0},s} \} + ev(\{\Theta _{f _{1}, s}\},
\end{equation*}
and the result clearly follows.
\end{proof}

%
%

 
\subsection{Multiplicative structure} \label{sec_Multiplicative structure}
Let $C _{\bullet} Path (Lag (M))$ denote the $A _{\infty}
$ $\mathbb{Z} _{2} $-graded category over a commutative ring $k$, whose objects are elements of $Lag (M)$
and where $hom _{C _{\bullet} Path (Lag (M))}  ({L _{0},
L _{1}})$
is the $\mathbb{Z} _{2}$ graded $k$-module $$C(\mathcal{P} (L
_{0}, L _{1}  ), k) = \sum _{d
\text{ even}} C _{d}(\mathcal{P}
(L _{0}, L _{1}  ), k) \oplus \sum _{d \text{ odd}}
C_{d}(\mathcal{P}
(L _{0}, L _{1}  ), k).
$$  
The $A _{\infty} $ structure comes from the classical
Stasheff $A _{\infty}$ composition on the chain path
category. 
\begin{conjecture} \label{conj_functor}
The morphism $\Psi $ of Lemma \ref{lem_PsiPigroup} extends
to a $A _{\infty}$ functor
$$\widetilde{\Psi} : C
_{\bullet} Path (Lag (M)) \to Fuk (M),$$
where on objects $\widetilde{\Psi}  (L) = L$, and on
morphisms satisfying the following relation. For $a \in C(\mathcal{P} (L _{0}, L _{1}), k)$,
represented by $f: S ^{d} \to \mathcal{P} (L
_{0}, L _{1})$, $[\widetilde{\Psi} (a)] = \Psi ([f])$, where
the right-hand side is as in Lemma \ref{lem_PsiPigroup}.
\end{conjecture}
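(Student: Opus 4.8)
The plan is to build $\widetilde{\Psi}$ as a count of holomorphic sections of families of Hamiltonian structures whose underlying surfaces are disks with $n+1$ boundary punctures --- one distinguished negative (output) end $e_0$ and $n$ positive (input) ends $e_1,\dots,e_n$ --- with the arcs between consecutive punctures carrying Lagrangian subbundles determined by the input path-chains, exactly as $\mathcal{L}_p$ is built in Definition \ref{defInducedLoop}. For $n=1$ the underlying surface is $\mathcal{D}$ and the construction reduces to the definition of $\Psi=\widetilde{\Psi}^{1}$ from Section \ref{section:SeidelMorph}, so the asserted relation $[\widetilde{\Psi}(a)]=\Psi([f])$ holds on the nose for the linear term. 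The higher components $\widetilde{\Psi}^{n}$, $n\geq 2$, are the new data, and the content of the conjecture is that they can be chosen so that the $A_\infty$ functor equations hold.

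Concretely, for inputs $a_i \in C(\mathcal{P}(L_{i-1},L_i),k)$ represented by $f_i: P_i \to \mathcal{P}(L_{i-1},L_i)$ with $P_i$ a simplex, I would form the family Hamiltonian structure in the sense of Definition \ref{def:HamiltonianStructure} over the parameter manifold-with-corners $\mathcal{K}_n = J_n\times \prod_{i=1}^{n} P_i$, where $J_n$ is the multiplihedron, the correct moduli of $(n+1)$-punctured disks interpolating between the domain-degeneration regime governing the Fukaya compositions and the path-concatenation regime governing the source $A_\infty$ structure. Over a point $(s,\mathbf{r})$ the fiber is the disk $S_{n+1,s}$, the Lagrangian subbundle $\mathcal{L}_{s,\mathbf{r}}$ is determined along the $i$-th boundary arc by the path $f_i(r_i)$, and the connection is chosen $\mathcal{L}$-exact and compatible with fixed end connections as in Section \ref{section:exact}, using Lemma \ref{lemma:areadisk} to realize it with controlled area. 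I would then set $\widetilde{\Psi}^{n}(a_1,\dots,a_n) := ev(\{\Theta_{s,\mathbf{r}}\})$ as in Definition \ref{def_EvaluationTotal}, read off at chain level over the fundamental chain of $\mathcal{K}_n$. The taut and hyper-taut framework of Section \ref{section:gluing} is what keeps this well posed: Lemma \ref{lemmaInvariantPairing} makes $\carea$ a topological invariant of the relative class, the estimate \eqref{eq:areakappai} together with Lemma \ref{lemma:gluinglowerbound} bounds which classes $A$ can contribute, and Lemma \ref{lemma:immediategluing} shows tautness is preserved under the capping and gluing operations, so that the relevant moduli spaces are compact and only finitely many $A$ contribute.

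The $A_\infty$ functor equations would then be read off from $\#\partial\mathcal{M}^1=0$ for the one-dimensional components $\mathcal{M}^1$ of the compactified moduli, exactly in the spirit of the proofs of Lemma \ref{lem_evaluation} and Lemma \ref{lem_PsiPigroup}. The boundary $\partial\mathcal{M}^1$ receives four kinds of contributions: Floer breaking at the output end, giving $\mu^{1}_{Fuk}\circ\widetilde{\Psi}^{n}$; the facets of $J_n$ of the first type, where $S_{n+1}$ degenerates into a root disk carrying the output together with $r$ sub-disks, giving the Fukaya compositions $\mu^{r}_{Fuk}(\widetilde{\Psi}^{s_1},\dots,\widetilde{\Psi}^{s_r})$; the facets of $J_n$ of the second type, where a consecutive block of inputs bubbles off and their boundary paths concatenate, giving $\widetilde{\Psi}^{n-j+1}(\mathrm{id},\dots,m^{j},\dots,\mathrm{id})$ with $m^{j}$ the Stasheff composition of $C_\bullet Path(Lag(M))$; and the boundary $\partial(\prod_i P_i)$ of the parameter chains, giving the source differential $m^{1}=\partial$. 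Setting the signed count of all of these to zero is precisely the relation $\sum_{r}\mu^{r}_{Fuk}(\widetilde{\Psi}^{s_1},\dots,\widetilde{\Psi}^{s_r}) = \sum_{j}\widetilde{\Psi}^{n-j+1}(\mathrm{id},\dots,m^{j},\dots,\mathrm{id})$, which is the $A_\infty$ functor equation.

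The main obstacle is twofold. First, one needs a coherent system of regular perturbation data --- connections, almost complex structures, and in the monotone non-exact setting a virtual or Kuranishi-type regularization --- defined simultaneously over all the $\mathcal{K}_n$ and compatible with every gluing appearing in the boundary analysis above; assembling this inductively over $n$, consistently with both the associahedral gluings on the $J_n$ side and the Stasheff concatenation on the path side, is the technical heart, and is the reason the statement is offered only as a conjecture. Second, and conceptually, one must verify that the multiplihedron $J_n$ genuinely realizes both the conformal degeneration of the domain and the concatenation of paths in $C_\bullet Path(Lag(M))$, so that the two families of facets fit into a single relation; this identification, together with control of disk and sphere bubbling via the monotonicity and the taut energy estimates of Section \ref{section:gluing}, is what would complete the argument.
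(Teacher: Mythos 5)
The statement you are proving is Conjecture \ref{conj_functor}; the paper offers no proof of it, so there is nothing to compare your argument against. Your outline is the expected strategy --- multiplihedron-parametrized family Hamiltonian structures over $\mathcal{K}_n = J_n \times \prod_i P_i$, with the $A_\infty$ functor equations read off from $\#\partial\mathcal{M}^1 = 0$ --- and it is consistent with the machinery the paper does develop (Definitions \ref{def:HamiltonianStructure} and \ref{def_EvaluationTotal}, Lemmas \ref{lem_evaluation} and \ref{lem_PsiPigroup}). But as written it is a plan, not a proof. The two ``obstacles'' you concede in your final paragraph are not loose ends to be tidied; they are the entire mathematical content of the conjecture. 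Until you construct a coherent, inductive system of regular (or virtually regular) perturbation data over all the $\mathcal{K}_n$, compatible with every facet gluing, and verify that the codimension-one facets of $J_n$ really do bifurcate into exactly the Fukaya-side degenerations and the Stasheff concatenations with matching signs, no $A_\infty$ relation has been established.

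Two more concrete gaps in the parts you treat as routine. First, the paper's own lemmas do not cover the situation you need: Lemma \ref{lem_evaluation} produces a cycle only when $\mathcal{K}$ is closed, and Lemma \ref{lemma:gluing} handles boundaries only under an admissibility hypothesis that forbids contributions near $\partial\mathcal{K}$; for the $A_\infty$ equations you need precisely the opposite --- a systematic accounting of the moduli over $\partial\mathcal{K}_n$ --- so you must extend the evaluation formalism to manifolds with corners before the boundary analysis in your third paragraph even makes sense. Second, in the monotone non-exact setting, Maslov index $2$ disk bubbling contributes to the boundary of one-dimensional moduli spaces and can obstruct $\mu^1 \circ \mu^1 = 0$ and the functor equations; you invoke ``monotonicity and the taut energy estimates'' but do not explain why these bubbles cancel or are excluded, and the taut/hyper-taut framework of Section \ref{section:gluing} controls energy, not index-two bubbling. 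Your identification of $\widetilde{\Psi}^1$ with $\Psi$ for $n=1$ is fine, but everything from $n=2$ onward remains to be done.
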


\section {A basic computation} \label{section:computeSeidel}
Under certain conditions  the spaces of perturbation data for certain problems in
Gromov-Witten theory admit a Hofer like functional. Although these spaces of
perturbations are usually contractible, there may be a gauge group in the
background that we have to respect, so that working equivariantly there is topology. The reader may think of the analogous situation in
Yang-Mills theory \cite{cite_AtiyahBottTheYang-MillsequationsoverRiemannsurfaces}.

The basic idea of the computation that we will perform consists of cooling the perturbation data as much as possible (in the sense
of the functional) to obtain a mini-max (for the functional)
data, using which we may write down our moduli spaces
explicitly. This idea was first used in
the author's \cite{cite_SavelyevVirtualMorsetheory}.
\subsection {Hofer length} \label{section:hoferlength} 
For $p: [0,1] \to \operatorname {Ham} (M,\omega)$  a smooth path, define
\begin{align*}
&  L ^{+} (p) :=   \int _{0} ^{1} \max _{M}  H ^{p} _{t} dt, \\
   &  L ^{-} (p) :=   \int _{0} ^{1} \max _{M}  (-H ^{p} _{t}) dt, \\
   & L ^{\pm} (p) :=   \max \{L ^{+} (p), L ^{-} (p)  \},  \\
\end{align*}
where $H ^{p}: M \times [0,1] \to \mathbb{R} $ generates
$p$, and is normalized by the condition that for each $t$, $H ^{p} _{t}:= H ^{p}| _{M \times \{t\}}   $ has mean 0, that is $\int _{M} H ^{p} _{t} dvol _{\omega} =0  $.

\subsubsection{Exact paths of Lagrangians}
\label{sec_ExactPaths}
Recall that an exact path $p$ of Lagrangians in $M$ is smooth
mapping $p: L  \times [0,1] \to M$, satisfying:
\begin{itemize}
	\item $p| _{L \times \{t\}}$
is an embedding for each $t$,  with Lagrangian image denoted $p (t)$.
\item The restriction of the 1-form $p ^{*} \omega
(\frac{\partial}{\partial t}, \cdot)$ to each $L \times \{t\}$  is exact.
\end{itemize}

If $p: [0,1] \to Lag (M)$ is an exact path,
define
\begin{equation*}
   L ^{+} _{lag}: \mathcal{P} Lag (M)  to \mathbb{R},     
\end{equation*}
\begin{equation*}
 L ^{+} _{lag}  (p) :=  \int _{0} ^{1} \max _{p (t)}  H ^{p } _{t} dt, 
\end{equation*}
$p (0) =L$ and where $H ^{p}: M \times [0,1] \to \mathbb{R}  $ is
normalized as above and generates a lift $\widetilde{p} $
of $p$ to $\operatorname {Ham} (M, \omega )$ starting at
$id$. By ``lift'', we mean that $p (t)=\widetilde{p} (t) (p (0)) $. (That is $H ^{p}  $ generates a path in
$\operatorname {Ham} (M,  \omega) $, which moves $L_0$ along
$p$.) Some theory of this latter functional is developed in
\cite{cite_LagrangianHofer}.
We may however omit the subscript $lag$ from notation, as usually there can be no confusion which functional we mean.

\subsubsection{Restriction to standard equators in $S ^{2}$} \label{sec_Lagrangian equators in $S ^{2}$}
Note that $Lag ^{eq}(S ^{2} ) $ is naturally diffeomorphic to $S ^{2} $ and
moreover it is easy to see that the functional $L ^{+}| _{Lag ^{eq}  (S ^{2} )} $ is proportional to the
Riemannian length functional $L _{met} $ on the path space of $S ^{2} $, with its
standard round metric $met$. We will then sometimes identify
$Lag ^{eq} (S ^{2})$ with $S ^{2}$.

Let now $L _{0}, L _{1} \in Lag ^{eq}  (S ^{2})$ be any
transverse pair, and  $$f': S ^{2} \to \mathcal{P} ^{eq} ({L
_{0},L _{1}}):= \{p \in \mathcal{P} ({L _{0},L _{1}})
\,|\, p (t) \subset Lag ^{eq}(S ^{2}), \, \forall t \in [0,1] \}, $$  
represent the generator $a$ of the group $\pi _{2}  
(\mathcal{P} ^{eq} ({L _{0},L _{1}}), \gamma) \simeq
\mathbb{Z}$, where $\gamma$  denotes the unique minimal
$met$-geodesic, from $L _{0}$ to $L _{1}$. (It is unique by
the assumption that $L _{0}, L _{1}$ are transverse, so that
the corresponding points on $S ^{2}$ are not conjugate with
respect to $met$.)

The idea of the computation is then this: perturb $f'$ to be transverse to the (infinite dimensional) stable manifolds
for the functional $L _{met}$ on $$\mathcal{P} ^{eq} ({L
_{0},L _{1}}), $$ then push the cycle down 
by the ``infinite time'' negative gradient flow for this
functional, and use the resulting representative to compute
$\Psi (a)$. Although, we will not actually need infinite
dimensional differential topology, instead we use a descent argument
using Whitehead's compression lemma.
\subsection {The ``energy'' minimizing perturbation data} 
\label{secion:pertubationdata}
Classical Morse theory \cite{cite_MilnorMorsetheory} 
tells us that the energy functional $$E (p) = \int _{[0,1]}
\langle \dot p (t), \dot p (t)  \rangle _{met}  \,dt  $$ on
$ \mathcal{P} ^{eq} ({L_0, L_1})$ is
Morse non-degenerate with a single critical point in each degree.
\begin{proposition} \label{thm_}
The homotopy class $a$ has a representative $f:
S ^{2} \to \mathcal{P} ^{eq} ({L_0, L_1}) $, such that:
\begin{enumerate}
	\item $f$ maps to  the
2-skeleton of $ \mathcal{P} ^{eq} ({L_0, L_1})$, for the
Morse cell decomposition induced by $E$.
\item   $f ^{*}E
$ is Morse, with a single  maximizer $\max$,  of index 2, and s.t. $\gamma _{0} = f (max)$ is the index $2$
geodesic.  
\end{enumerate}
We call such a representative $f$ \textbf{\emph{minimizing}}.
\end{proposition}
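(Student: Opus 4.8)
The plan is to exploit the classical Morse theory of the energy functional $E$ on the path space $\mathcal{P}^{eq}(L_0,L_1)$, which is diffeomorphic (via the identification of $Lag^{eq}(S^2)$ with $S^2$) to the space of geodesic-approximating paths in the round $S^2$ between two non-conjugate points. As stated in the excerpt, $E$ is Morse non-degenerate with exactly one critical point in each index $k=0,1,2,\dots$, namely the geodesics $\gamma_k$ from $L_0$ to $L_1$ winding with increasing length. This gives $\mathcal{P}^{eq}(L_0,L_1)$ the homotopy type of a CW complex with one cell in each dimension, and in particular a Morse cell decomposition whose $k$-skeleton $X^{(k)}$ is the union of unstable (descending) manifolds of critical points of index $\le k$. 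Since $\pi_2(\mathcal{P}^{eq}(L_0,L_1),\gamma)\simeq\mathbb{Z}$ is generated by $a$, I want to push the given representative $f'\colon S^2\to\mathcal{P}^{eq}$ down into $X^{(2)}$ and normalize it.

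First I would make $f'$ smooth and transverse to the stable (ascending) manifolds of the critical points; because the critical points have codimension equal to their index, transversality forces the image to meet $W^s(\gamma_k)$ in expected dimension $2-k$, so in particular $f'(S^2)$ misses every $W^s(\gamma_k)$ with $k>2$ and meets $W^s(\gamma_2)$ in isolated points. Second, rather than literally running the infinite-time negative gradient flow (which would require infinite-dimensional differential topology), I would invoke Whitehead's compression lemma: the inclusion $X^{(2)}\hookrightarrow \mathcal{P}^{eq}(L_0,L_1)$ is $2$-connected on the relevant homotopy, and since $S^2$ is $2$-dimensional and $f'$ has already been made transverse (hence disjoint from the higher skeleta's stable manifolds), $f'$ is homotopic to a map $f$ landing in $X^{(2)}$. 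This gives conclusion (1). Third, to obtain conclusion (2), I would arrange by a further small homotopy (a standard Morse-theoretic normal form / handle-straightening argument) that $f^*E$ is itself a Morse function on $S^2$ with a single index-$2$ maximizer $\max$, and that $f(\max)=\gamma_2$, the index-$2$ geodesic; the degree-$1$ condition coming from $f$ representing the generator $a$ forces exactly one such top cell to be hit with multiplicity one.

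The key remaining point is that $f$ must carry the maximum of $f^*E$ precisely to the critical point $\gamma_2=\gamma_0$ of index $2$, and that the induced Morse structure on $S^2$ has a unique maximizer of index $2$. Here I would use that $f$ generates $\pi_2$, so after compressing into $X^{(2)}$ the map $f$ is, up to homotopy, the characteristic map of the single $2$-cell; its attaching behavior over the index-$2$ critical point $\gamma_2$ can be taken to be a single transverse positive hit, and a local model near $\max$ makes $f^*E$ look like a nondegenerate maximum.

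\textbf{The main obstacle} I expect is the passage from ``homotopic into the $2$-skeleton'' (a purely topological statement from compression) to the genuinely Morse-theoretic statement (2): namely producing a representative for which the \emph{pullback} $f^*E$ is itself Morse on $S^2$ with the prescribed single index-$2$ maximizer mapping to $\gamma_0$. The compression lemma controls the image up to homotopy but says nothing about the regularity of $f^*E$, so I would need to combine the skeletal normal form with a transversality/genericity argument ensuring $f$ is a diffeomorphism onto a neighborhood of $\gamma_2$ near $\max$ and strictly decreasing in $E$ away from it. This is the step where the ``cooling the data by the gradient flow'' heuristic is really doing the work, and care is needed to carry it out via finite-dimensional Whitehead compression rather than the infinite-dimensional flow; the uniqueness of critical points of $E$ in each index is exactly what makes this controllable.
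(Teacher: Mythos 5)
Your proposal follows essentially the same route as the paper: descend into the $2$-skeleton of the Morse cell decomposition of $E$ via Whitehead's compression lemma, then use the fact that $a$ is a generator (degree one onto $B^2/B^1 \simeq S^2$, classified by the Hopf theorem) to normalize $f^*E$ to a Morse function with a single index-$2$ maximizer sent to the index-$2$ geodesic. The preliminary transversality step and the explicit local normal-form discussion you add are refinements of, not departures from, the paper's argument, and the "main obstacle" you flag is exactly the point the paper also treats briskly.
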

\begin{proof} [Proof]
This follows by Whitehead's compression lemma which is as follows.
\begin{lemma} [Whitehead, see \cite{cite_HatcherAlgebraic}]
   Let $(X,A)$ be a CW pair and let $(Y,B)$ be any pair with $B \neq \emptyset$. For each $n$ such that $X - A$ has cells of dimension $n$, assume that $\pi _{n} (Y,B,y _{0})=0$ for all $y _0 \in B$. Then every map $f:(X,A) \to (Y,B)$ is homotopic relative to $A$ to a map $X \to B$.
\end{lemma}
Suppose that $a$ has a representative 
$f': S ^{2} \to \mathcal{P} _{L_0, L_1} (S ^{2}) $ mapping into the $n$-skeleton $B ^{n} $ for the Morse cell decomposition for $E$, $n>2$.
Apply the lemma above with $(X,A) = (S ^{2}, pt )$, $Y = B ^{n} $ and $B = B ^{n-1} $ as above. Then the quotient $B ^{n} /B ^{n-1} $ is a wedge of $n$-spheres and since $\pi _{2} (S ^{n}) = 0 $ for $n>2$, $f$ can be homotoped into $B ^{n-1}   $ by the Whitehead lemma. Descend this way until we get a representative mapping into $B ^{2} $.

Furthermore since
$\pi_2 (S ^{1} ) = 0 $  such a representative cannot
entirely lie in the 1-skeleton. 
It follows, since we have a single Morse 2-cell, that 
there is a representative $f: S ^{2} \to  \mathcal{P} ^{eq}
({L_0, L_1})  $, for $a$, s.t. the function ${f} ^{*} E  $  is Morse with a 
maximizer $\max$,  of index 2, and s.t. $\gamma _{0} = f (max)$ is the index $2$
geodesic.  

In principle there maybe more than one such maximizer $\max$, but recall
that we assumed that $a$ is the generator, so the degree of the
map $f: S ^{2} \to B ^{2}/B ^{1} \simeq S ^{2}$ is one. 
By the Hopf theorem, homotopy classes of maps  of spheres
are classified by degree. Hence after a further homotopy, 
there will be only one such maximizer.
\end{proof}

It follows that $\max$ is likewise the unique index 2 maximizer of the function ${f} ^{*} L _{met}  $ by the classical relation between the energy functional and length functional. And so $\max$ is the index 2 maximizer of $f ^{*}L ^{+}  $.
\subsection {The corresponding minimizing data} \label{section:distinguisheddata} 

\begin{lemma} \label{lemma:taut} There is a minimizing
representative $f _{0} $ for the class $a$ and a taut
Hamiltonian structure $$\Theta _{f _{0}}  =\{\mathcal{D}
\times S ^{2}, \mathcal{D}, \mathcal{L} _{f _{0} (b)},
\mathcal{A} _{b}  \},$$ 
satisfying:
\begin{enumerate}
	\item $f _{0} (b) \in Lag ^{eq} (S ^{2})$, for each $b$.
	\item \begin{equation} \label{eq.arealength}
  \forall b: \area (\mathcal{A} _{b}) = L ^{+} (f _{0}  (b)).
\end{equation} 
\end{enumerate}

\end{lemma}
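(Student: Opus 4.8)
The plan is to take $f_0$ to be a minimizing representative of the class $a$ furnished by the preceding Proposition; since that proposition already produces a representative whose image lies in $\mathcal{P}^{eq}(L_0,L_1)$, condition (1) holds for free, and the real work is the construction of the connections $\mathcal{A}_b$ and the verification of tautness. For each $b$ I would realize the equator path $p_b := f_0(b)$ by a rotation: under the identification $Lag^{eq}(S^2)\cong S^2$ the compressed representative may be taken so that each $p_b$ is a (reparametrized) geodesic, hence generated by a one-parameter family in $SO(3)\subset\Ham(S^2,\omega)$. The generating Hamiltonian $H^{p_b}$ is then a height-type function, for which $\max_{S^2}H^{p_b}_t=\max_{p_b(t)}H^{p_b}_t$ and, by the reflection symmetry of great circles, $L^+=L^-$. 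Consequently the Lagrangian and Hamiltonian positive lengths agree and $L^\pm(\widetilde{p_b})=L^+(p_b)$, which is what will let the inequality of Lemma \ref{lemma:areadisk} be sharpened to the equality demanded by (2).

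For condition (2) I would invoke Lemma \ref{lemma:areadisk} with the end connection $\mathcal{A}_0$ chosen so that its holonomy path $q$ satisfies $L^+(q)=0$; the trivial connection works, since then $\mathcal{A}_0(L_0)=L_0$ which is transverse to $L_1$ as $L_0\pitchfork L_1$. The elementary calculation in the proof of Lemma \ref{lemma:areadisk} gives $\area(\mathcal{A}')=L^+(p_b\cdot q)=L^+(p_b)+L^+(q)=L^+(p_b)$ on the nose for the model coupling form, and because $p_b$ is rotation-generated (so that the maximum of the generating Hamiltonian is attained along $p_b(t)$) the retraction step can be arranged to preserve this value; combined with the energy lower bound of Lemma \ref{lemma:smallempty} this yields $\area(\mathcal{A}_b)=L^+(p_b)=L^+(f_0(b))$ rather than a mere upper bound. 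Setting $\mathcal{A}_b:=\widetilde{\mathcal{A}}_0^{p_b}$ and using the naturality clause of Lemma \ref{lemma:areadisk} makes $b\mapsto\mathcal{A}_b$ a smooth section, so that $\Theta_{f_0}=\{\mathcal{D}\times S^2,\mathcal{D},\mathcal{L}_{f_0(b)},\mathcal{A}_b\}$ is a genuine family Hamiltonian structure (up to the harmless negative-end reversal $p\mapsto p^{-1}$, which costs nothing here since $L^+=L^-$).

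It remains to establish tautness, which I expect to be the main obstacle. Fixing $b_1,b_2\in S^2$, I must produce a concordance over $[0,1]$ from $\Theta_{b_1}$ to $\Theta_{b_2}$ that is hyper taut, i.e. a connection on the total space over $[0,1]\times\mathcal{D}$ extending $\mathcal{A}_{b_1},\mathcal{A}_{b_2}$ whose coupling form vanishes on the swept Lagrangian $\textbf{L}$. The vanishing of $\Omega$ along the fiber and along fiber-horizontal pairs is automatic, since each $\mathcal{L}$ is fiberwise Lagrangian and the connections preserve it; the genuinely new requirement is vanishing in the concordance direction, a flux (area-swept) condition on a moving family of great circles. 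This is exactly where the rigidity of equators enters: a great circle bounds two hemispheres of equal area $\tfrac12\area(S^2,\omega)$, so interpolating between two great circles by the $SO(3)$-action sweeps equal areas on the two sides and the net flux vanishes, forcing $\Omega_{\mathcal{A}}|_{\textbf{L}}=0$. I would build the interpolating connection from the same rotation Hamiltonians as above and verify coupling-area invariance through Lemma \ref{lemmaInvariantPairing}; the identification of taut concordance of equator families with homotopy in Lemma \ref{lemma:tautLagS2}, together with the gluing stability of Lemma \ref{lemma:immediategluing}, then packages this into the required hyper taut concordance between any two fibers.

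The hard step is thus isolating precisely why the equal-hemisphere symmetry yields a well-defined connection on the parametrized total space with vanishing coupling form on $\textbf{L}$: for a general family of Lagrangians no such connection exists (this is the flexibility half of the paper's dichotomy), and the whole geometric content of the lemma is that great circles are rigid enough to make it possible. I would therefore carry this out by an explicit $SO(3)$-equivariant construction rather than an abstract existence argument, so that the area identity (2) and the tautness are produced simultaneously from the same rotation data.
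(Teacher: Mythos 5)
Your proposal follows essentially the same route as the paper: a minimizing representative whose paths are realized by one-parameter subgroups of $SO(3)$ (so that $L^{+}_{lag}$ equals $L^{+}$ of the lift), the construction of Lemma \ref{lemma:areadisk} with trivial end connection to get $\area(\mathcal{A}_{b})=L^{+}(f_{0}(b))$, and tautness from the fact that an $\mathfrak{so}(3)$ vector field tangent to a great circle has generating Hamiltonian a height function vanishing on that circle (your equal-hemispheres/zero-flux observation is the same fact under the mean-zero normalization), packaged via Lemma \ref{lemma:tautLagS2}. The one correction: a two-parameter family of paths cannot in general be made pointwise geodesic, so the paper instead invokes Milnor's piecewise-geodesic approximation, for which the identity $L^{+}_{lag}(p)=L^{+}(\widetilde{p})$ still holds and the rest of your argument goes through unchanged.
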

\begin{proof}
Note that a geodesic segment $p: [0,1] \to S ^{2} \simeq Lag
^{eq} (S ^{2})$ for the
round metric $met$ on $S ^{2} $ has a unique lift
$$\widetilde{p}: [0,1] \to PU (2) \simeq SO (3),$$
$\widetilde{p} (0)=id $ with $\widetilde{p} $ a segment of
a one parameter subgroup, and in this case $$L ^{+} _{lag}
(p)= L ^{+} (\widetilde{p}). $$ It then follows that for
a piecewise geodesic path $p$ in $S ^{2} $, there is
likewise a unique lift $ \widetilde{p}: [0,1] \to PU (2), $ satisfying $$L ^{+} _{lag} (p)= L ^{+} (\widetilde{p}). $$

Now, if $f$ is a minimizing representative of $a$ as above, we may homotop it to a likewise minimizing representative $f _{0}
$, so that for all $b$ $f _{0}  (b)$ is piecewise geodesic.
This follows by the piecewise geodesic approximation theorem
Milnor~\cite[Theorem 16.2]{cite_MilnorMorsetheory} of the loop space. 

Let $\mathcal{A} _{0} $ be the trivial Hamiltonian
connection on $[0,1] \times M$. Use the construction of
Lemma \ref{lemma:areadisk}, to get a family of Hamiltonian
connections $\{ \widetilde{\mathcal{A}} ^{f _{0}  (b)} _{0} \}   $. In
this case, since $\mathcal{A} _{0} $ is trivial $$\area
(\widetilde{\mathcal{A}} ^{f _{0}  (b)} _{0}  ) = L ^{+}(f _{0}  (b)). $$  

Set $A _{b} = \widetilde{\mathcal{A}} ^{f _{0}  (b)} _{0}$.
It remains to verify that $\Theta _{f _{0} }  =\{\mathcal{D}
\times S ^{2}, \mathcal{D}, \mathcal{L} _{f _{0} (b)},
\mathcal{A} _{b} \}$ is taut, which also implies
\eqref{eq.arealength}.
This follows by the following lemma.  
\begin{lemma} \label{lemma:tautLagS2}  Two loops $p _{0},
p _{1}: S ^{1} \to  Lag ^{eq}  (S ^{2}) \subset Lag (S ^{2})
$ are taut concordant as defined in the Definition \ref{def_concordantIntro}, iff
they are homotopic in $Lag ^{eq} (S ^{2})$. 
\end{lemma}
\begin{proof}
Suppose that $H: S ^{1} \times [0,1] \to Lag ^{eq} (S ^{2})$ is a smooth homotopy between $p _{0},
p _{1}$. Let $\mathcal{L}$ be the corresponding Lagrangian
sub-fibration of $(Cyl = S ^{1} \times [0,1]) \times S ^{2}
$. That is for $(\theta, t) \in Cyl$, $\mathcal{L}
_{(\theta, t)} = H (\theta, t)$.

Then $\mathcal{L} $ is as in Definition \ref{def_concordantIntro}. Let
$\mathcal{A}$ be any $PU (2)$ connection on $Cyl \times \mathbb{CP} ^{1}$ which preserves $\mathcal{L}$ (there are no obstructions to constructing this). Then $R _{{A}}$ is a $\lie PU (2)$ valued 2-form, such that for all $v,w \in T _{z} Cyl $ the vector field $X=R _{\mathcal{A}} (z) (v,w)$ is tangent to $\mathcal{L} _{z} $. 

Let $\phi: (M _{z}, L _{z}) \to (\mathbb{CP} ^{1}, L _{0})$
be a Kahler map of the pair, where $L _{0}$  is the
standard equator. Then $\phi _{*} (X)$ is tangent to $L
_{0}$ and is in the image of $$\mathfrak{lie} (PU (2)) \to
\mathfrak{lie} (\operatorname {Ham} (\mathbb{CP} ^{1},
\omega _{st}) ) \simeq \mathbb{R} ^{}.  $$ In particular, 
its generating Hamiltonian is some standard height
function on $\mathbb{CP} ^{1}$ and so vanishes on $L _{0}$.
Thus $H _{X}$ vanishes on $L _{z}$.
By the definition, $\Omega _{\mathcal{A}} $ vanishes on $\mathcal{L}$ and so we are done.
\end{proof}
\end{proof}

So given $\{\mathcal{A} _{b} \}$ as in the lemma above, since 
$$\forall b: \area (\mathcal{A}  _{b}  ) = L ^{+} (f _{0}  (b)),$$ we immediately deduce:
\begin{lemma} \label{lemma.morseatmax} The function $area: b \mapsto \area
   (\mathcal{A}  _{b}  )$ has
   a unique maximizer, coinciding with the maximizer $\max$ of ${f} _{0}  ^{*}L
   _{met}  $ and $\area$ is Morse at $\max$ with index $2$.
\end {lemma}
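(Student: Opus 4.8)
The plan is to recognize the function $area \colon b \mapsto \area(\mathcal{A}_b)$ as a positive rescaling of a functional whose behavior at its maximum has already been pinned down, and then to transport that information across the rescaling.

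First I would invoke the second conclusion of Lemma \ref{lemma:taut}, namely that $\area(\mathcal{A}_b) = L^+(f_0(b))$ for every $b$. As functions on the parameter sphere $S^2$ this is the identity $area = f_0^*L^+$. Since each path $f_0(b)$ lies in $Lag^{eq}(S^2)$, and since the restriction of $L^+$ to $Lag^{eq}(S^2)$ is proportional to the Riemannian length functional $L_{met}$ with a fixed positive constant $c$ (as observed in identifying $Lag^{eq}(S^2)$ with the round $S^2$), this yields
\begin{equation*}
area = f_0^*L^+ = c \cdot f_0^*L_{met}, \qquad c > 0.
\end{equation*}

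It then remains to read off the behavior of $f_0^*L_{met}$ at $\max$. The representative $f_0$ constructed in the proof of Lemma \ref{lemma:taut} is again minimizing, so the preceding Proposition and the remark following it apply to it verbatim: $f_0^*E$ is Morse on the $2$-dimensional parameter sphere with the single maximizer $\max$ of index $2$ -- on a $2$-dimensional domain index $2$ simply means a nondegenerate local maximum -- and, through the classical relation between the energy and length functionals, $\max$ is likewise the unique such maximum of $f_0^*L_{met}$ (equivalently of $f_0^*L^+$). Multiplication by the positive constant $c$ fixes the location of critical points and preserves both nondegeneracy and index, so $area = c \cdot f_0^*L_{met}$ has the same unique maximizer $\max$ and is Morse of index $2$ there. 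This is precisely the assertion.

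The only point requiring care -- and the step I would treat most carefully -- is that $L^+$ and $L_{met}$ are not smooth across all of the parameter sphere, so the Morse statement is to be read locally near $\max$, where the paths $f_0(b)$ are honest geodesics and the energy and length are related by a smooth strictly increasing function of a single positive variable. This monotone relation carries nondegeneracy and the Morse index directly from the smooth Morse function $f_0^*E$, and the positivity of $c$ does the rest.
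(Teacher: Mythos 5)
Your argument is correct and follows the paper's own route exactly: the paper likewise deduces the lemma immediately from the identity $\area(\mathcal{A}_b) = L^+(f_0(b))$ of Lemma \ref{lemma:taut}, combined with the earlier observation that $\max$ is the unique index-$2$ maximizer of $f_0^*L_{met}$ (hence of $f_0^*L^+$) via the classical energy--length relation. Your additional care about reading the Morse condition locally near $\max$ through the smooth monotone relation with $f_0^*E$ is a reasonable elaboration of what the paper leaves implicit.
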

\subsection {Holomorphic sections for the data}
Let us now rename $f _{0} $ by $f$, $\mathcal{L} _{f _{0} (b) } $ by $\mathcal{L} _{b} $, and $\Theta _{f _{0} }  $ by $\Theta = \{\Theta _{b} \}$.

   As $ f (\max)$ is a geodesic for $met$,  its lift
	 $\widetilde{f} (\max) $ to $SO (3)$ is a rotation around
	 an axis intersecting $L _{0} = f (\max) (0)$ in a pair of
	 points, in particular there is a unique point $$x _{\max}
	 \in \bigcap _{t} (L _{t} = f (\max) (t))   $$ maximizing
	 $H ^{\max} _{t} $ for each $t$. In our case this follows
	 by elementary geometry but there is a more general
	 phenomenon of this form c.f. \cite{cite_LagrangianHofer}. 

Define  $$\sigma  _{\max}: \mathcal{D}  \to  \mathcal{D} \times S ^{2}   $$ to be the constant section $z \mapsto x _{\max }.
$ 
Then $\sigma _{\max} $ is a $\mathcal{A}  _{\max}  $-flat
section with boundary on $\mathcal{L}_{\max} $, and is consequently $J (\mathcal{A} _{\max} )$-holomorphic.

\begin{lemma}
$\sigma _{\max}$ satisfies: $$Maslov ^{vert} (\sigma _{\max}
^{/}) = -2,$$ see Appendix
\ref{appendix:maslov} for the definition of this Maslov
number.
\end{lemma}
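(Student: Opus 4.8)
The plan is to reduce the computation to the winding of the Lagrangian boundary condition at the fixed point $x_{\max}$, and then to match this winding with the Morse index of the geodesic $f(\max)$. First I would record that, since $x_{\max}$ lies on the rotation axis of $\widetilde{f}(\max)$, it is a common fixed point of the one-parameter subgroup $R_{t}:=\widetilde{f}(\max)(t)\in PU(2)=SO(3)$; in particular $x_{\max}\in L_{t}:=f(\max)(t)$ for every $t$, so that $\sigma_{\max}$, and hence its capping $\sigma_{\max}^{/}$, is the constant section at $x_{\max}$. Consequently the only source of a nonzero vertical Maslov number is the twisting of the Lagrangian tangent line $T_{x_{\max}}L_{t}$ inside the symplectic plane $T_{x_{\max}}S^{2}$. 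By the last part of Lemma \ref{lemma:gluinglowerbound}, $Maslov^{vert}(\sigma_{\max}^{/})$ may be computed after taut concordance to the trivial structure $\Theta_{0}$, where it equals the honest relative Maslov number of the class in $H_{2}(S^{2},L_{0})$; equivalently $Maslov^{vert}(\sigma_{\max}^{/})=-\carea(\Theta_{\max}^{/},\sigma_{\max}^{/})/const$.

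Next I would compute the winding. At the fixed point the differential $dR_{t}|_{x_{\max}}$ acts on $T_{x_{\max}}S^{2}$ as the rotation by the angle $\theta_{t}$ of $R_{t}$, so that $T_{x_{\max}}L_{t}$ is $T_{x_{\max}}L_{0}$ rotated by $\theta_{t}$ inside $\mathbb{RP}^{1}$. Since $L^{+}|_{Lag^{eq}(S^{2})}$ is proportional to $L_{met}$ and $\widetilde{f}(\max)$ is a segment of a one-parameter subgroup, $\theta_{t}$ is a positive multiple of arclength along $f(\max)$ in the pole-sphere; the key coincidence I would exploit is that the interior conjugate points of $f(\max)$ are exactly the parameters at which the line $T_{x_{\max}}L_{t}$ returns to $T_{x_{\max}}L_{0}$, both being governed by the same rotation angle $\theta_{t}$. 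Because $f(\max)$ is the index-$2$ geodesic, there are exactly two such interior parameters (the two simple conjugate points), so the boundary loop of Lagrangian lines, once closed across the cap, has Maslov index equal to $2$ in absolute value.

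Finally I would fix the sign. The end of $\mathcal{D}$ is negative, so the orientation convention in the coupling-area pairing of Lemma \ref{lemma:gluinglowerbound} reverses, and choosing $x_{\max}$ to be the maximizer of $H^{\max}_{t}$ (rather than the minimizer) selects the minus sign, yielding $Maslov^{vert}(\sigma_{\max}^{/})=-2$. As a cross-check, monotonicity of the equator gives $const=\tfrac{1}{4}\area(S^{2},\omega)$ (a hemisphere has $\omega$-area $\tfrac{1}{2}\area(S^{2},\omega)$ and Maslov $2$), so that $\carea(\Theta_{\max}^{/},\sigma_{\max}^{/})=-const\cdot(-2)=\tfrac{1}{2}\area(S^{2},\omega)$, in exact agreement with the systolic value of Theorem \ref{thm:Hofer}.

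The main obstacle is precisely this sign together with the capping bookkeeping. One must verify that gluing the cap, which joins $T_{x_{\max}}L_{1}$ back to $T_{x_{\max}}L_{0}$, does not alter the count of conjugate crossings contributing to the boundary loop, and that the negative end indeed produces the minus sign rather than the plus sign. Making this rigorous amounts to matching the clean identification $Maslov^{vert}=-(\text{Morse index of }f(\max))$ with the orientation data built into Lemma \ref{lemma:gluinglowerbound}, which is the step I would treat with the most care.
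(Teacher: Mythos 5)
Your proof follows essentially the same route as the paper's: reduce to the winding of the boundary Lagrangian lines $T_{x_{\max}}L_{t}$ in $Lag(T_{x_{\max}}S^{2})\simeq S^{1}$, use the Morse index theorem to equate the returns of that line to its initial position with the two conjugate points of the index-$2$ geodesic, and close up across the cap to get magnitude $2$. The only place the paper is more explicit is the sign, which it pins down by observing that the convention $\omega(X_{H},\cdot)=-dH$ makes the boundary path clockwise at the maximizer $x_{\max}$, so the capped loop has degree $-1$; your sign discussion gestures at the same ingredients and is correct, if less worked out.
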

\begin{proof}
 Set $$T ^{vert} _{z} \mathcal{L} _{\max}:= \{v \in T \mathcal{L} \subset T _{z} (\mathcal{D} \times S ^{2})\,|\, pr _{*} v =0 \}   $$ where $pr: \mathcal{D} \times S ^{2} \to \mathcal{D} $ is the projection. 
Denote by $$Lag (T _{x _{\max}} S ^{2} \simeq Lag (\mathbb{R} ^{2} ) \simeq S ^{1}  $$ the space of oriented linear Lagrangian subspaces of $T _{x _{\max} } S ^{2} $. 
Let $\rho$ be the path in $Lag (T _{x _{\max}} S ^{2})$ defined by 
\begin{equation*}
\rho (t) =  T ^{vert} _{(\zeta (t), x _{\max} )}   \mathcal{L} _{\max}, \quad t \in [0,1]
\end{equation*}
where $\zeta: \mathbb{R} \to \partial \mathcal{D}$ is a fixed parametrization as in Definition \ref{defInducedLoop}. 

By our conventions  for the Hamiltonian
vector field: $$\omega (X _{H}, \cdot ) = - dH (\cdot),$$
$\rho$ is a clockwise oriented path from $$T _{x _{\max} } L _{0} := T ^{vert} _{(\zeta (0), x _{\max} )} \mathcal{L} _{\max} $$ to $$T _{x
_{\max} } L _{1} := T ^{vert} _{(\zeta (1), x _{\max} )} \mathcal{L} _{\max}  $$ for the orientation induced by the complex orientation on $T _{x _{\max} } S ^{2}  $. 

By the Morse index theorem in Riemannian geometry
\cite{cite_MilnorMorsetheory} and by the condition that $f (\max)$ has Morse index 2, $\rho$  visits initial point $\rho (0)$ exactly twice if we count the start, as this corresponds to the geodesic $f (\max)$ passing through two conjugate points in $S ^{2} $. So the concatenation of $\rho$ with the minimal counter-clockwise path from $T _{x
_{\max} } L _{1}   $ back to $T _{x
_{\max} } L _{0}  $ is a degree $-1$ loop, if $S ^{1}  \simeq  Lag
(\mathbb{R}^{2})$ is given the counter-clockwise
orientation. Consequently,
   $$Maslov ^{vert} (\sigma _{\max} ^{/}) = -2.  $$  
\end {proof}
We set $A _{0} = [\sigma _{\max}]$.
\begin{proposition} \label{lemma:onlyelement}
   $(\sigma _{\max}, \max)$ is the sole element of
$
   \overline {\mathcal{M}} (\Theta, A _{0}).
$
\end{proposition}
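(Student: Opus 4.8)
The plan is to run a sharp coupling-area argument: I will show that the coupling area of the class $A_0$ is a constant over the taut family, equal to the \emph{maximal} value $\area(\mathcal{A}_{\max})$ of the function $b \mapsto \area(\mathcal{A}_b)$, and then use the energy estimate for holomorphic sections to force every element of $\overline{\mathcal{M}}(\Theta, A_0)$ to live over $b=\max$ and to coincide with $\sigma_{\max}$. A pleasant feature is that this is an a priori energy argument, so it needs no genericity. First I would invoke tautness: since $\Theta$ is taut, Lemma \ref{lemmaInvariantPairing} shows $\carea(\Theta_b, A_0)$ is independent of $b$, and by its last sentence this quantity depends only on the relative class $A_0$, so that \emph{every} finite-energy section in class $A_0$ over any $\Theta_b$ has coupling area equal to one common value $C$. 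To evaluate $C$ I would use $\sigma_{\max}$ itself: being the constant, hence $\mathcal{A}_{\max}$-flat, section through the axis point $x_{\max}$ that maximizes $H^{\max}_t$ on each fiber, its coupling area is computed directly from the explicit coupling form of Lemma \ref{lemma:areadisk} (with $\mathcal{A}_0$ trivial, as arranged in Lemma \ref{lemma:taut}) to be $\int_0^1 H^{\max}_t(x_{\max})\,dt = L^+(f(\max)) = \area(\mathcal{A}_{\max})$. Thus $C = \area(\mathcal{A}_{\max}) = \max_b \area(\mathcal{A}_b)$.

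Next I would localize the moduli space to the fiber over $\max$. For any $(\sigma,b) \in \overline{\mathcal{M}}(\Theta, A_0)$, the contrapositive of Lemma \ref{lemma:smallempty} gives $\carea(\Theta_b, A_0) \leq \area(\mathcal{A}_b)$, that is $\area(\mathcal{A}_{\max}) \leq \area(\mathcal{A}_b)$. Since by Lemma \ref{lemma.morseatmax} the function $b \mapsto \area(\mathcal{A}_b)$ attains its maximum at the \emph{unique} point $\max$, this forces $\area(\mathcal{A}_b) = \area(\mathcal{A}_{\max})$ and hence $b=\max$; equivalently, for every $b \neq \max$ the structure $\Theta_b$ is $A_0$-small and $\overline{\mathcal{M}}(\Theta_b, A_0)$ is empty.

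It then remains to identify the sections over $\max$, where the estimate is sharp. Any $\sigma$ over $\max$ in class $A_0$ satisfies $-\int \sigma^{*}\Omega_{\mathcal{A}_{\max}} = C = \area(\mathcal{A}_{\max})$, which is exactly equality in Lemma \ref{lemma:energypositivity1}. As in the proof of that lemma the integrand $\sigma^{*}(\Omega_{\mathcal{A}_{\max}} + \pi^{*}\alpha_{\mathcal{A}_{\max}})$ is pointwise non-negative for a $J(\mathcal{A}_{\max})$-holomorphic section and decomposes into a vertical-energy contribution and a curvature contribution; its vanishing therefore forces both $\pi_{vert}\circ d\sigma \equiv 0$, so $\sigma$ is flat, and $\sigma(z)$ to sit at the fiberwise maximum of the curvature Hamiltonian for every $z$, which is precisely $x_{\max}$. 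The only such section is the constant one, $\sigma_{\max}$. The same exact identity also excludes Gromov-Floer degenerations in the compactification: any sphere bubble in the fiber $S^2$, any disk bubble on $\mathcal{L}_{\max}$, or any Floer breaking at the negative end would carry strictly positive symplectic area by monotonicity, so the surviving section component would have coupling area strictly below $C$, contradicting the sharp value just obtained. Hence $\overline{\mathcal{M}}(\Theta, A_0) = \{(\sigma_{\max}, \max)\}$.

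The step I expect to be the main obstacle is the saturation analysis over $\max$: justifying carefully, with the sign conventions of the coupling form and of the positive Hofer norm $|\cdot|_{+}$ from Definition \ref{def:area}, that equality in Lemma \ref{lemma:energypositivity1} forces simultaneously flatness and pointwise localization at the curvature maximum $x_{\max}$, together with the bubbling/breaking exclusion in the Gromov-Floer boundary. By contrast, the localization to $b=\max$ is a formal consequence of the tautness invariance of $\carea$ (Lemma \ref{lemmaInvariantPairing}), the energy bound (Lemma \ref{lemma:smallempty}), and the Morse-theoretic uniqueness of the maximizer (Lemma \ref{lemma.morseatmax}).
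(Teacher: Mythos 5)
Your proof is correct and follows essentially the same route as the paper's: tautness (Lemma \ref{lemmaInvariantPairing}) pins $\carea(\Theta_b,A_0)$ at the common value $L^{+}(f(\max))=\area(\mathcal{A}_{\max})$, the energy bound of Lemmas \ref{lemma:energypositivity1} and \ref{lemma:smallempty} together with the unique maximizer of Lemma \ref{lemma.morseatmax} localizes the moduli space to $b=\max$, and saturation of the inequality over $\max$ forces horizontality. The only minor differences are that the paper concludes $\sigma=\sigma_{\max}$ from uniqueness of the flat section asymptotic to $\gamma_{0}$ rather than from your curvature-maximum localization, and in your bubbling exclusion the sign is reversed: by monotonicity the surviving section component's coupling area exceeds (rather than falls below) $C$, and it is this that contradicts the bound $\carea\leq\area(\mathcal{A}_{\max})$.
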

\begin{proof}
By Stokes theorem, since $\omega$ vanishes on $\sigma _{\max} $, it is immediate:
\begin{equation} \label{eqLength1}
\carea (\Theta _{\max}, A _{0})=- \int _{\mathcal{D}} \sigma _{\max}  ^{*}  \widetilde{\Omega} _{\max}
= L ^{+} (f (\max)). 
\end{equation}
   Moreover, since $\Theta=\{\Theta _{b} \} $ is taut
	 $\carea (\Theta _{b}, A _{0}  ) =  L ^{+} (f (\max))$.
So by \eqref{eq.arealength} and by Lemmas \ref{lemma:energypositivity1},
   \ref{lemmaInvariantPairing} we have:
\begin{equation*}
 L ^{+} (f (\max))  \leq \area
(\mathcal{A}_{b}) = L ^{+}(f (b)),
\end{equation*}
whenever there is an element 
$$(\sigma, b) \in  
\overline {\mathcal{M}} (\{
   \Theta _{b}  
\}, A _{0}).
$$
But clearly this is impossible unless $b= \max$, 
since $L ^{+} (f (b) ) < L ^{+} (f (\max) )  $
for $b \neq \max$. So to finish the proof of the proposition we just need:
\begin{lemma}
There are no elements $\sigma$ other than $\sigma
_{\max}$ of
the moduli space 
$$
\overline {\mathcal{M}} (
   \Theta _{\max}, A _{0}).
$$ 
\end{lemma}
\begin{proof}
We have by \eqref{eqLength1}, and by \eqref{eq.arealength}
$$0=  \langle [\widetilde{\Omega} _{\max} +
   \alpha _{\widetilde{\Omega}_{\max}}] , [\sigma_{\max} ]
   \rangle, $$
and so given another element $\sigma$   
we have:
\begin{equation*} \label{eq:vanishing}
   0=    \langle [\widetilde{\Omega}   _{\max} +
   \alpha _{\widetilde{\Omega} _{\max}}] , [\sigma] \rangle.
\end{equation*}
It follows that $\sigma $ is necessarily $\widetilde{\Omega}   _{\max}
$-horizontal, since 
$$(\widetilde{\Omega}   _{\max} +
\alpha _{\widetilde{\Omega}   _{\max}  }) (v, J _{\widetilde{\Omega}  _{\max} }v) \geq 0.$$
Since $J _{\widetilde{\Omega}  _{\max}}$ by assumptions preserves the vertical and $\mathcal{A} _{\max} $-horizontal subspaces of $T (\mathcal{D} \times S ^{2} )$, and since the 
inequality is strict for $v$ in the vertical tangent bundle of
$$S ^{2} \hookrightarrow \mathcal{D} \times S ^{2}   \to \mathcal{D},
$$ 
the above inequality is strict whenever $v$ is not horizontal.
So $\sigma$ must be $\mathcal{A} _{\max} $-horizontal. But then $\sigma = \sigma _{\max} $ since $\sigma _{\max} $ is the only flat
section asymptotic to $\gamma _{0} $.
\end {proof}
\end {proof} 
\subsubsection {Regularity}
Since by the dimension formula \eqref{eq:dimensionMaslov}
only class $A _{0}$ curves can contribute to $\Psi (a)$, it will follow that $$\Psi (a) = \pm [\gamma _{0} ]$$ if we knew
that
$(\sigma_{\max}, \max)$ was a regular element of 
$$ 
\overline {\mathcal{M}} (\{
   \Theta _{b} 
\}, A _{0}).
$$
We won't answer directly if $(\sigma _{\max}, \max)$ is regular, 
although it likely is. But
it is regular after a suitably  small Hamiltonian perturbation of the family
$\{\mathcal{A}  _{r} \}$ vanishing at $\mathcal{A}  _{\max} $.  
We call this essentially automatic regularity. 
\begin{lemma} \label{lemma.perturbedreg} 
There is a family $\{\mathcal{A} ^{reg}_{b}  \}  $ arbitrarily $C ^{\infty}
$-close to $\{\mathcal{A} _{b} \}$ with  $\mathcal{A} ^{reg}_{\max} =
\mathcal{A} _{\max}  $ and such that
\begin{equation} \label{eqModuliReg} 
\overline {\mathcal{M}} (\{
   \mathcal{D} \times S ^{2}, \mathcal{D}, \mathcal{L} _{b}, \mathcal{A} ^{reg}  _{b} 
\}, A _{0}),
\end{equation} 
is regular, with $({\sigma}_{\max}, {\max}  )$  its sole element. In particular $$\Psi (a) = \pm [\gamma _{0} ].$$
\end{lemma}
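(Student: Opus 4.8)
The plan is to set up the parametrized linearized operator at $(\sigma_{\max},\max)$ and force its surjectivity by a small perturbation acting only through the parameter directions, leaving $\mathcal{A}_{\max}$ (and hence the solution $\sigma_{\max}$) untouched. Since $\mathcal{K}=S^{2}$ and the expected dimension of $\overline{\mathcal{M}}(\{\Theta_{b}\},A_{0})$ is zero by \eqref{eq:dimensionMaslov}, the parametrized linearization $D$ at $(\sigma_{\max},\max)$ is Fredholm of index $0$. Choosing an auxiliary connection $\mathcal{B}$ on $\bm{\widetilde{S}}\to\mathcal{K}$ as in Section \ref{sec_Family version} to split vertical and parameter directions, I would write
$$D = D^{vert}\oplus\Lambda,\quad \Lambda: T_{\max}\mathcal{K}\to\coker(D^{vert}),$$
where $D^{vert}$ is the fiberwise linearized Cauchy--Riemann operator along $\sigma_{\max}$ with boundary condition $\rho$ and asymptotic constraint $\gamma_{0}$, and $\Lambda$ records the infinitesimal change of the section equation as $b$ moves, i.e. the dependence on $\partial_{b}\mathcal{A}_{b}|_{\max}$.

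First I would show $\ker D^{vert}=0$. The vertical bundle along $\sigma_{\max}$ is the complex line $T_{x_{\max}}S^{2}$, and a kernel element is a holomorphic section over $\mathcal{D}$ of this line bundle with boundary in $\rho$ and decay at the negative end; since the relevant boundary Maslov index equals $-2<0$ (as computed above via $\rho$), a winding/similarity-principle argument forces such a section to vanish. Hence $D^{vert}$ is injective and $\dim\coker(D^{vert})=-\ind(D^{vert})=\dim\mathcal{K}=2$, so surjectivity of $D$ reduces to $\Lambda$ being an isomorphism between two-dimensional spaces. The nondegeneracy of the area maximum at $\max$ (Lemma \ref{lemma.morseatmax}, index $2$) is the geometric shadow of $\Lambda$ being already close to an isomorphism, which is why only a small correction is needed.

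Next I would produce the perturbation. A family perturbation $\{g_{b}\}$ with $g_{\max}=0$ leaves $D^{vert}$ and $\sigma_{\max}$ unchanged, but replaces $\Lambda$ by $\Lambda+L(\partial_{b}g_{b}|_{\max})$, where $L$ sends a variation of the connection's first $b$-jet to its image in $\coker(D^{vert})$. I would check that $L$ surjects onto $\coker(D^{vert})$ by a standard unique-continuation argument: a nonzero cokernel element is supported on an open subset of the interior of $\mathcal{D}$, and a Hamiltonian connection variation localized there pairs nontrivially with it. Choosing $\{g_{b}\}$ supported near $\max$, vanishing at $\max$, with $\partial_{b}g_{b}|_{\max}$ small and such that $\Lambda^{reg}=\Lambda+L(\partial_{b}g_{b}|_{\max})$ is an isomorphism (an open dense condition on the jet), produces $\{\mathcal{A}^{reg}_{b}\}$ arbitrarily $C^{\infty}$-close to $\{\mathcal{A}_{b}\}$ and equal to it at $\max$.

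Finally, smallness together with uniqueness gives regularity with the correct count. For $b$ bounded away from $\max$, the class $A_{0}$ moduli space is empty with room to spare, by the strict inequality $L^{+}(f(b))<L^{+}(f(\max))$ and Lemma \ref{lemma:smallempty}, so small perturbations keep it empty; near $\max$ the area obstruction and Proposition \ref{lemma:onlyelement} keep $(\sigma_{\max},\max)$ the sole solution. Thus \eqref{eqModuliReg} is regular with unique transversally cut out element $(\sigma_{\max},\max)$, and since only class $A_{0}$ contributes to $\Psi(a)$ by \eqref{eq:dimensionMaslov}, the signed count yields $\Psi(a)=\pm[\gamma_{0}]$, the sign depending on orientation conventions. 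The main obstacle is the third step: because the perturbation must vanish at $\max$ it has no zeroth-order effect on the equation at the solution, so transversality must be forced entirely through the parameter direction via $\partial_{b}g|_{\max}$, and one must verify that varying this first jet genuinely surjects onto $\coker(D^{vert})$ without creating nearby solutions.
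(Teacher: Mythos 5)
Your proposal is correct and follows essentially the same strategy as the paper: vanishing of $\ker D^{vert}$ from the vertical Maslov number $-2$ via Riemann--Roch for line bundles over the disk, the identification $\dim\coker(D^{vert})=2=\dim\mathcal{K}$ with the Morse index of $\area$ at $\max$, and regularity forced through the parameter directions by a perturbation vanishing at $\max$. The only difference is that the paper delegates this last step to Theorem 1.20 of the author's earlier work on Morse theory for the Hofer length functional, whereas you open that black box and supply the jet-surjectivity and compactness details explicitly.
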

\begin{proof}
The associated real linear Cauchy-Riemann operator $$D_{\sigma_{\max}}: \Omega ^{0} (\sigma _{\max} ^{*} T
   ^{vert} \mathcal{D} \times S ^{2} _{\max})    \to \Omega ^{0,1} (\sigma _{\max} ^{*} T
^{vert} \mathcal{D} \times S ^{2} _{\max}),$$ 
has no kernel, by Riemann-Roch 
\cite[Appendix
C]{cite_McDuffSalamonJholomorphiccurvesandsymplectictopology}, 
as the vertical Maslov number of $[\sigma _{\max} ]$ is $-2$.
And the Fredholm index of
$({\sigma}_{\max}, {\max}  )$ which is -2,  is -1 times the Morse index of the function $\area$
at $\max$, by
Lemma \ref{lemma.morseatmax}.
Given this, our lemma follows by a direct translation of
\cite[Theorem
1.20]{cite_SavelyevMorsetheoryfortheHoferlengthfunctional}, itself
elaborating on the argument in
\cite{cite_SavelyevVirtualMorsetheory}.
\end{proof}
To summarize: 
\begin{theorem} \label{thmNonZeroPsi} For $0 \neq b \in
\pi _{2} (\mathcal{P} ^{eq} ({L _{0}, L _{1}}))$,
and	$a = i _{*} (b)$, where $i: \mathcal{P} ^{eq} ({L _{0},
L _{1}}) \to \mathcal{P}  ({L _{0}, L _{1}})$ is the
inclusion.
\begin{equation*}
  0  \neq  \Psi (a) \in HF (L _{0}, L _{1}).
\end{equation*}
\end {theorem}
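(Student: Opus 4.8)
The plan is to read the statement off from the min--max computation of the preceding subsections, promoting the single computation for the generator to all of $\pi_2$ by the homomorphism property of $\Psi$ together with the fact that $HF(L_0,L_1)$ is a vector space over $\mathbb{Q}$. First I would reduce to the generator. Since $\pi_2(\mathcal{P}^{eq}(L_0,L_1),\gamma)\simeq\mathbb{Z}$, any nonzero $b$ may be written $b=n\cdot a_{\mathrm{gen}}$ with $n\neq 0$, where $a_{\mathrm{gen}}$ is the generator fixed in Section~\ref{section:computeSeidel}. By Lemma~\ref{lem_PsiPigroup} and functoriality of $i_*$ the composite $\Psi\circ i_*$ is a group homomorphism, so $\Psi(a)=\Psi(i_* b)=n\cdot\Psi(i_* a_{\mathrm{gen}})$. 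By Lemma~\ref{lemma.perturbedreg}, which assembles Proposition~\ref{lemma:onlyelement} with the essentially automatic regularity argument, one has $\Psi(i_* a_{\mathrm{gen}})=\pm[\gamma_0]$, a single Floer generator carrying coefficient $\pm 1$. As $HF(L_0,L_1)$ is a $\mathbb{Q}$-vector space, it then suffices to prove $[\gamma_0]\neq 0$, for then $\Psi(a)=\pm n[\gamma_0]\neq 0$ for every $n\neq 0$.

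The crux is thus the nonvanishing of the class $[\gamma_0]$, which I would establish by identifying $HF(L_0,L_1)$ outright. Since $L_0$ and $L_1$ are both great circles they are Hamiltonian isotopic, so Hamiltonian invariance of Floer homology gives $HF(L_0,L_1)\cong HF(L_0,L_0)$; and the Albers PSS isomorphism already invoked in the proof of Lemma~\ref{lemma:gluinglowerbound} identifies $HF(L_0,L_0)\cong QH(L_0)\cong H_*(S^1;\mathbb{Q})$, a group of rank $2$. On the other hand $CF(L_0,L_1)$ has exactly two generators, namely the two transverse intersection points of the equators. Since over a field $\dim HF = \dim CF - 2\,\mathrm{rank}\,\partial$, matching ranks forces the Floer differential to vanish identically, so every chain generator, and in particular $\gamma_0$, represents a nonzero homology class. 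Hence $[\gamma_0]\neq 0$, and $\Psi(a)=\pm n[\gamma_0]\neq 0$, as claimed.

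The step I expect to be the main obstacle is the input $QH(L_0)\neq 0$, that is, that the equator $\mathbb{RP}^1\subset\mathbb{CP}^1$ is a nonzero object with $HF\cong H_*(S^1;\mathbb{Q})$ over $\mathbb{Q}$. This is the one genuinely quantum ingredient: it requires controlling the signed count of Maslov-index-$2$ holomorphic disks bounded by the equator with its chosen spin structure, rather than the softer energy and tautness estimates that drive the rest of the paper; in particular a careless choice of orientations could in principle collapse $HF$ to zero. Once this nonvanishing is in place --- equivalently, the perfectness of $CF(L_0,L_1)$ in the sense used in Lemma~\ref{lemma:gluinglowerbound} --- the rank count and the reduction to the generator are purely formal.
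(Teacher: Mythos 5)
Your proposal is correct and follows essentially the same route as the paper: reduce to the generator via the homomorphism property of $\Psi$ (Lemma \ref{lem_PsiPigroup}) and the fact that $FH$ is a $\mathbb{Q}$-vector space, then invoke the min--max computation $\Psi(i_*a_{\mathrm{gen}})=\pm[\gamma_0]$ from Lemma \ref{lemma.perturbedreg}. The only difference is that you spell out why $[\gamma_0]\neq 0$ (via $HF(L_0,L_1)\cong QH(L_0)\cong H_*(S^1;\mathbb{Q})$ and the rank count forcing $\partial=0$), a step the paper leaves implicit in the perfectness assumption on $CF(\mathcal{A}_i)$ already used in Lemma \ref{lemma:gluinglowerbound}; you are right that the orientation/spin-structure check behind that isomorphism is the one genuinely quantum input.
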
 
\begin{proof} We have shown that $0  \neq  \Psi (a) \in HF
(L _{0}, L _{1})$, for $a$ the image of the generator of the
group $\pi _{2} (\mathcal{P} ^{eq} (L _{0}, L _{1}),
\mathbb{Z}).$ Since $\Psi$ is an additive group homomorphism the conclusion follows.
\end{proof}
\begin{proof} [Proof of the second part of Theorem \ref{thm:Hofer}
]
By the theorem above the natural map $$\pi _{2} (\mathcal{P}
^{eq} (L _{0}, L _{1}) \to  \pi _{2} (\mathcal{P} (L _{0},
L _{1})),$$ is an injection and hence:
\begin{equation*}
  \pi _{2} (\mathcal{P}
^{eq} (L _{0}, L _{0}) \to  \pi _{2} (\mathcal{P} (L _{0},
L _{0})),
\end{equation*}
is an injection and hence:
\begin{equation*}
  \pi _{3} (Lag ^{eq} (S ^{2})) \to  \pi _{3} (Lag (S
	^{2})),
\end{equation*}
is an injection.
\end{proof}

\section{Proof of Theorem \ref{thm:Hofer}} \label{section:applicationHofer}
Suppose otherwise, so that $$\min_{f \in  b} \max _{s \in S ^{2} } L ^{+} (f (s)) = U < \hbar,$$ for $b$ as in the statement of the theorem. 
	 
Fix $L _{1} \in Lag ^{eq} (S ^{2} ) $ so that $L _{0}
$ intersects $L _{1} $ transversally, and so that there is
a minimal geodesic path $p _{0} \in \mathcal{P} Lag ^{eq}  (L _{0}, L _{1}  ) $  with $$\kappa := L ^{\pm} (\widetilde{p} _{0} )   < \epsilon = (\hbar - U)/2. $$ Here $\widetilde{p} _{0}  $ is the geodesic lift to $PU (2)$ starting at $id$. 
	 
Then concatenating $f (s)$ with $p _{0} $ for each $s$, we obtain a smooth
family of paths: 
\begin{align*}
   & g: S ^{2} \to \mathcal{P} (L _{0},L _{1}) \\
   & g (0) = p _{0},
\end{align*}
such that $g$ represents the previously appearing class $a$, that
is the generator of the group $$\pi _{2}(
\mathcal{P} (L _{0},L _{1}), p _{0}). $$ 

Let $$\{\Theta _{s} \}= \{
   \mathcal{D} \times S ^{2}, \mathcal{D}, \mathcal{L} _{s}, \mathcal{A}_{s} 
\} _{\mathcal{K} = S ^{2}} ,$$ be the corresponding
Hamiltonian structure, where $\mathcal{A} _{s}$ is as in
Lemma \ref{lemma:taut}, defined with respect to $g$, and where $\mathcal{L} _{s}:=\mathcal{L} _{g (s)} $. 

In particular, $\{\Theta _{s} \}$ is taut and satisfies:
\begin{equation} \label{eq:hbarlower}
\forall s \in S ^{2}: \area (\mathcal{A} _{s} ) = L ^{+} (g (s))  < \hbar - \kappa.
\end{equation}

By the assumption that each $g (s)$ is taut concordant to the
constant loop at $L _{0} $, each $\Theta _{s} $ is taut
concordant to $$\mathcal{H} = (\mathcal{D} \times S ^{2},
\mathcal{D}, \mathcal{L} _{0}, \mathcal{A}),$$ where
$\mathcal{L} _{0} = \mathcal{L} _{p _{0}}  $,
${\mathcal{A}} = \widetilde{\mathcal{A}} _{0} ^{p _{0}}
$, where $\widetilde{\mathcal{A}} _{0} ^{p _{0}} $   is as
in Lemma \ref{lemma:areadisk}, for $\mathcal{A} _{0} $ the
trivial connection. 

Let $\Theta (L, \mathcal{A} _{0})$ be the construction as in \eqref{eq:ThetaL}.
Then  by Lemma \ref{lemma:immediategluing}, for each $s$, $$\Theta _{s} ^{/0}
: = \Theta _{s} \# _{0} \Theta (L _{0},
\mathcal{A} _{0} ) $$  is taut concordant to
$$\mathcal{H} ^{/0} : = \mathcal{H}  \# _{0} \Theta (L _{0},
\mathcal{A} _{0} ). $$ 

On the other hand, by Lemma \ref{lemma:tautLagS2}
$\mathcal{H} ^{/0} $ is taut concordant to the trivial
Hamiltonian structure $(D ^{2}  \times S ^{2}, D ^{2},
\mathcal{L} _{tr} , \mathcal{A} _{tr} )$, where $\mathcal{L} _{tr} $ the trivial bundle with fiber $L _{0} $ and $\mathcal{A} _{tr} $ the trivial Hamiltonian connection. 
So for each $s$:
\begin{equation} \label{eq:careablah}
\carea (\Theta _{s} ^{/0} , A _{0}) =  \carea (\mathcal{H}
^{/0} , A _{0}) = \hbar.
\end{equation}

Now by Theorem \ref{thmNonZeroPsi} $$ev (\{\Theta _{s} \}, A _{0} ) = \Psi (a) \neq 0.$$ And so:
\begin{equation*}
\overline {\mathcal{M}} (\{\Theta _{s}\}, A _{0} ) \neq \emptyset,
\end{equation*}
but this contradicts the conjunction of \eqref{eq:hbarlower}, \eqref{eq:careablah},  and Lemma \ref{lemma:gluinglowerbound}.
\qed

\appendix 
\section{On the Maslov number and dimension formula} \label{appendix:maslov}
Let ${S}$ be a Riemann surface with boundary and a strip end
structure as previously.

Let $\mathcal{V} \to S$ be a rank $r$ complex vector bundle,
trivialized at the open ends $\{e _{i} \}$, so that we have distinguished bundle charts $
[0,1] \times (0,\infty) \times \mathbb{C} ^{r}  \to \mathcal{V}$ at the positive ends. (Similarly, for negative ends.) 

Let $$\Xi \to \partial S  \subset S $$
be a totally real rank $r$ subbundle of $\mathcal{V}$, which  is constant in the
coordinates $$ [0,1] \times (0, \infty) \times \mathbb{C}^{r},  $$
at the positive ends, again similarly with negative ends. 

For each (positive) end $e _{i} $ and its chart  $e _{i}:
[0,1] \times (0, \infty) \to S$, let
$b ^{j}  _{i}: (0, \infty) \to \partial S  $, $j=0,1$ be the restrictions of $e _{i} $ to $\{i\} \times (0, \infty)$.

We then have a pair of real vector spaces $$\Xi _{i} ^{j} =  \lim _{\tau \mapsto \infty} \Xi|_{b
^{j} _{i} 
 (\tau) }.
 $$

There is a Maslov number $Maslov (\mathcal{V}, \Xi, \{\Xi
^{j}  _{i} \})$ associated to this data, and which we now
briefly describe.
In the case 
$\Xi _{i} ^{0} = \Xi _{i} ^{1}$, let $(\mathcal{V} ^{/}, \Xi
^{/}  )$ be obtained from $(\mathcal{V}, \Xi,
\{\Xi ^{j}  _{i} \})$ by capping off each $e _{i} $ end of $\mathcal{V} \to S$. Here the capping operation is similar to the one in Section
\ref{section:gluing}.
Then $Maslov (\mathcal{V}, \Xi, \{\Xi ^{j}  _{i} \})$
coincides with the boundary Maslov index of $(\mathcal{V} ^{/}, \Xi ^{/}  )$ in the sense of
\cite[Appendix
C3]{cite_McDuffSalamonJholomorphiccurvesandsymplectictopology}.

When $\Xi _{i} ^{0}$ is transverse to $\Xi _{i} ^{1}$ for each $i$,  $Maslov (\mathcal{V}, \Xi, \{\Xi ^{j}
_{i} \})$ is obtained as the Maslov index for the
modified pair $(\mathcal{V} ^{/}, \Xi ^{/}  )$
obtained by again capping off the ends $e _{i} $ via gluing (at each end $e _{i} $) with $$(\mathcal{D} \times \mathbb{C}^{r}, 
 \widetilde{\Xi}, \{\widetilde{\Xi}_{0} ^{j}  \} ),$$ where
 $\mathcal{D}$ is as before.  Here  
 $\widetilde{ {\Xi}} ^{0} _{i} = { {\Xi}} ^{1} _{0} $ and 
$\widetilde{ {\Xi}} ^{1} _{i} = { {\Xi}} ^{0} _{0} $, while
$ \widetilde{\Xi}$ over the boundary of $\mathcal{D}$ is determined by the ``shortest path'' from 
$\widetilde{ {\Xi}} ^{0} _{0}$ to $\widetilde{ {\Xi}} ^{1}
_{0}$, which means the following. 
As $\widetilde{ {\Xi}} ^{0} _{0}$ to $\widetilde{ {\Xi}} ^{1}
_{0}$ are a pair of transverse, totally real subspaces, up
to a complex isomorphism of $\mathbb{C} ^{r} $ (whose choice
will not matter), we may identify them with the subspaces
$\mathbb{R} ^{r} $, and $i \mathbb{R} ^{r} $. After this identification our shortest path is just $e ^{i \theta} \mathbb{R} ^{r}  $, $\theta \in [0, \pi _{2} ]$.

Let $D$ be a real linear Cauchy-Riemann operator on
$\mathcal{V}$, which in particular is an operator: 
\begin{equation*}
D: \Omega ^{0} _{\Xi} (S, \mathcal{V}) \to \Omega ^{0,1}
_{\Xi} (S, \mathcal{V}),  
\end{equation*}
where $\Omega ^{0} _{\Xi} (S, \mathcal{V})$ denotes the
space of smooth $\mathcal{V} $-valued $0$-forms (i.e.
sections) satisfying $\theta (\partial S) \subset \Xi $, and
$\Omega ^{0,1} _{\Xi} (S, \mathcal{V})$ denotes the
analogous space of smooth complex anti-linear 1-forms.

Suppose further that $D$ is asymptotically $\mathbb{R} ^{}
$-invariant in the strip end coordinates at the ends.
After standard Sobolev completions,  the Fredholm index of
$D$ is given by:
\begin{equation*}
r \cdot \chi (S) + Maslov (\mathcal{V}, \Xi, \{\Xi _{i} \}).
\end{equation*}
The proof of this is analogous to \cite[Appendix
C]{cite_McDuffSalamonJholomorphiccurvesandsymplectictopology}, we can also reduce it to that statement via a gluing
argument. (This kind of argument appears for instance in
\cite{cite_SeidelFukayacategoriesandPicard-Lefschetztheory})
\subsection {Dimension formula for moduli space of sections}
Suppose we have a Hamiltonian structure $\Theta = (\widetilde{S}, S,
\mathcal{L}, \mathcal{A} ) $.  Suppose that either the
corresponding Lagrangian submanifolds $$L _{i} ^{j} =  \lim _{\tau \mapsto \infty} \mathcal{L}|_{b
^{j} _{i} (\tau) },$$ intersect transversally (identifying
the corresponding fibers) or coincide. (Similarly for
negative ends.) 

Let $A \in H _{2} ^{sec} (\widetilde{S}, \mathcal{L} ) $,
with the latter as in Section \ref{def:HamiltonianStructure}. And let $\mathcal{M} (\Theta, A)$ be as in Section \ref{def:HamiltonianStructure}. Define $$Maslov ^{vert} (A) $$ to be the 
Maslov number of the triple $(\mathcal{V}, \Xi, \{\Xi _{i} \})$ determined by the pullback by $\sigma \in \mathcal{M} (A)$ of the vertical tangent bundle of $\widetilde{S}$, $\mathcal{L}$.
Then the expected dimension of $\mathcal{M} (A)$ is:
\begin{equation} \label{eq:dimensionMaslov}
r \cdot \chi (S) + Maslov ^{vert}  (A).
\end{equation}
\bibliographystyle{siam}
\bibliography{link.bib}
\end{document}